\documentclass[final,leqno]{siamltex}
\usepackage{xcolor}
\usepackage[normalem]{ulem}

\usepackage{amssymb}
\usepackage{amsmath}
\usepackage{graphicx}
\usepackage{url}
\usepackage{pdflscape}
\usepackage{comment}
\usepackage{wrapfig}
\usepackage{verbatim}
\usepackage{caption}
\usepackage{subcaption}
\usepackage{booktabs} 
\usepackage{float} 
\usepackage{multicol} 
\usepackage{colortbl}
\usepackage{enumitem}
\usepackage{longtable}

\usepackage{algorithm}
\usepackage{algorithmic}
\usepackage[left=3.25cm,top=3.1cm,right=3.0cm,bottom=3.25cm,bindingoffset=0.5cm]{geometry}
\usepackage[colorinlistoftodos,prependcaption,textsize=tiny]{todonotes}
\RequirePackage{fix-cm}


%
\newtheorem{assumption}[theorem]{Assumption}

\newtheorem{remark}[theorem]{Remark}
\newtheorem{example}[theorem]{Example}

\usecounter{algorithmenumi}
\providecommand{\norm}[1]{\lVert#1\rVert}

\DeclareMathOperator{\range}{range}     
\DeclareMathOperator{\E}{\mathbf{E}}
\DeclareMathOperator{\Exp}{\mathbf{E}}
\DeclareMathOperator{\Prob}{\mathbf{P}}

\newcommand\tagthis{\addtocounter{equation}{1}\tag{\theequation}}
\newcommand{\ve}[2]{\langle #1 ,  #2 \rangle}

\newcommand{\cR}{\mathcal{R}}
\newcommand{\R}{\mathbb{R}}

\newcommand{\RSD}{RSD }

\newcommand{\sS}{\mathcal{S}}

\graphicspath{{./}{./exp/}}


\title{Randomized sketch  descent methods for non-separable linearly constrained optimization}

\author{Ion Necoara,  Martin Tak\'a\v{c}\thanks{I. Necoara is with  Automatic Control and Systems Engineering Department, University  Politehnica Bucharest, 060042 Bucharest, Romania, {\tt\small ion.necoara@acse.pub.ro}. M. Tak\'a\v{c}  is with Industrial and Systems Engineering Department,  Lehigh University,  Bethlehem, PA 18015, USA, {\tt\small Takac.MT@gmail.com}.}}

\begin{document}
\maketitle

\begin{abstract}
In this paper we consider large-scale smooth optimization problems with multiple linear coupled constraints. Due to the non-separability of the  constraints, arbitrary random sketching would not be guaranteed to work. Thus,  we first investigate necessary and sufficient conditions for the sketch  sampling to have well-defined algorithms. Based on these sampling conditions we developed new sketch descent methods for solving general smooth linearly constrained problems, in particular, random sketch descent and accelerated random sketch descent methods. From our knowledge, this is the first convergence analysis of random sketch descent algorithms for optimization problems with multiple  non-separable linear  constraints. For the general case, when the objective function is smooth and non-convex, we prove for the non-accelerated variant sublinear rate in expectation for an appropriate optimality measure. In the smooth convex case, we derive for both algorithms, non-accelerated and accelerated random sketch descent, sublinear convergence rates in the expected values of the objective function. Additionally, if the objective function satisfies a strong convexity type condition, both algorithms converge linearly in expectation. In special cases, where complexity bounds are known for some particular sketching algorithms, such as coordinate descent methods for optimization problems with a single linear coupled constraint, our theory recovers the best-known bounds.  We also show that when  random sketch is sketching the coordinate directions randomly produces better results than the fixed selection rule.  Finally, we present some numerical examples to illustrate the performances of our new algorithms.
\end{abstract}


\section{Introduction}
During the last decade first order methods, that eventually utilize also some curvature information, have become the methods of choice for solving optimization problems of large sizes arising in all areas of human endeavor where data is available, including   machine learning \cite{NecPat:14,RicTac:11,ShaZha:13}, portfolio optimization
\cite{Mar:52,FroRei:15}, internet and multi-agent systems \cite{IshTem:12}, resource allocation \cite{Nec:13,XiaBoy:06}  and image processing \cite{Wri:10}. These large-scale problems are often highly structured (e.g., sparsity in data,  separability in objective function, convexity) and it is important for any optimization method to take advantage of the underlying structure. It turns out that gradient-based algorithms can really benefit from the structure of the optimization models arising in these recent applications  \cite{FerRic:15,Nes:10}.

\textbf{Why random sketch descent methods?}
The optimization problem  we consider in this paper has the following features: {\it the size of data  is very large} so that usual methods based on whole gradient/Hessian computations are
prohibitive; moreover  {\it the constraints are coupled}.  In this case, an appropriate way to approach these problems is through sketch descent methods due to their low memory requirements and low per-iteration computational cost. Sketching is a very general framework that covers as a particular case the (block) coordinate descent methods \cite{LuoTse:93} when  the sketch matrix is given by sampling columns of the identity matrix. Sketching was used, with a big success, to either decrease the computation burden when evaluating the gradient in first order methods \cite{Nes:10} or to avoid solving the full Newton direction in second order methods \cite{pilanci2017newton}.  Another crucial advantage of sketching is that for structured problems  it keeps the computation cost low, while preserving the amount of data brought from RAM to CPU as for full gradient or Newton methods, and consequently allows for better CPUs utilization on modern multi-core machines. Moreover, in many situations general sketching  keeps the per-iteration running-time almost unchanged when compared to the particular sketching of the identity  matrix (i.e. comparable to coordinate descent settings). This, however, leads to a smaller number of iterations needed to achieve the desired quality of the solution as observed e.g. in  \cite{qu2016sdna}.

In second order methods sketching was used to either decrease the computation cost when evaluating the  full Hessian or to avoid solving the full Newton direction. In \cite{pilanci2017newton,berahas2017investigation} a Newton sketch algorithm was proposed for unconstrained  self-concordant minimization, which performs an approximate Newton step, wherein each iteration only a sub-sampled Hessian is used. This procedure significantly reduces the computation cost, and still guarantees superlinear convergence for self-concordant objective functions.  In \cite{qu2016sdna}, a random  sketch method was used to minimize a smooth function which admits a non-separable quadratic upper-bound. In each iteration a block of coordinates was chosen and  a subproblem  involving  a  random  principal
submatrix of the Hessian of the quadratic approximation was solved to obtain an improving direction.

In first order methods particular sketching  was used, by choosing as sketch matrix (block) columns of the identity matrix, in order to avoid computation of the full gradient, leading to coordinate descent framework. The main differences in all variants of coordinate descent methods consist in the criterion of choosing at each iteration the coordinate over which we minimize the  objective function and the complexity of this choice. Two classical criteria used often in these algorithms are the cyclic  and the greedy  coordinate search,
which significantly differs by the amount of computations required
to choose the appropriate index. For cyclic coordinate search
estimates on the rate of convergence were given recently in
\cite{BecTet:12,GurOzd:17,SunYe:16}, while for the greedy coordinate search (e.g. Gauss-Southwell rule) the convergence rates were given  in
\cite{Tse09,LuoTse:93}. Another approach is based on random choice
rule, where the coordinate search is random. Complexity
results on random coordinate descent methods  for smooth convex
objective functions were obtained in \cite{Nes:10,Nec:13}. The
extension to composite convex objective functions was given e.g. in
\cite{RicTac:11,NecCli:14,LuXia:15,NecPat:14,richtarik2016optimal}. These methods are inherently serial. Recently, accelerated \cite{FerRic:15,fercoq2014fast}, parallel \cite{NecCli:14,RicTac:13,tappenden2018complexity}, asynchronous \cite{LiuWri:14} and distributed implementations \cite{takavc2015distributed, marevcek2015distributed} of coordinate descent methods were also analyzed. Let us note that the idea of sketching or sub-sampling was also successfully applied in various other settings, including \cite{wang2017sketching,richtarik2017stochastic,gower2018stochastic}.

\vspace{0.1cm}

\textbf{Related work}. However, most of the aforementioned sketch
descent methods assume essentially unconstrained problems, which at best
allow separable constraints. In contrast, in this paper  we consider sketch
descent methods for general smooth problems with linear coupled constraints.
Particular sketching-based algorithms, such as  greedy coordinate descent schemes,  for solving   linearly constrained optimization problems were investigated in \cite{Tse09,LuoTse:93}, while more recently in \cite{Bec:12} a greedy coordinate descent method is
developed for minimizing a smooth function subject to a single
linear equality constraint and additional bound constraints on the
decision variables. Random coordinate descent methods that choose at least 2 coordinates at each iteration  have been also proposed recently for solving convex problems with a single linear coupled  constraint  in  \cite{Nec:13,NecPat:14,NecNes:11}. In all these papers, detailed convergence analysis is provided for both, convex and non-convex settings. Motivated by the work in \cite{NecNes:11} several recent papers have tried to extended the random coordinate descent settings to multiple linear coupled constraints \cite{FroRei:15,NecPat:14,RedHef:14}. In particular, in \cite{RedHef:14} an extension of the 2-random coordinate descent method from  \cite{NecNes:11} has been analyzed, however under very conservative assumptions, such as full rank condition on each block of the matrix describing the linear constraints.  In \cite{FroRei:15} a particular sketch  descent method is proposed, where the sketch matrices  specify arbitrary subspaces that need to generate the kernel of the matrix describing the coupled constraints. However, in the large-scale context and for general linear constraints it is very difficult to generate such sketch matrices. Another strand of this literature develops  and analysis center-free gradient methods \cite{XiaBoy:06}, augmented Lagrangian based methods \cite{HonLuo:12} or Newton methods \cite{WeiOzd:13}.

\vspace{0.1cm}

\textbf{Our approach and contribution}.   Our approach introduces general sketch descent algorithms for solving large-scale smooth  optimization problems with multiple linear coupled constraints. Since we have non-separable constraints in the problem formulation, a random sketch descent scheme needs to consider new sampling rules for choosing the coordinates. We first investigate  conditions on the sketching of the coordinates over which we minimize at each iteration in order  to have well-defined algorithms. Based on these conditions  we develop new random sketch descent methods for solving our linearly constrained convex problem, in particular, random sketch descent and accelerated  random sketch descent methods.  However, unlike existing  methods such as coordinate descent, our algorithms are capable of utilizing curvature information, which leads to striking improvements in both theory and practice.

\noindent \textit{Our contribution.} To this end, our main contribution can be summarized as follows:

(\textit{i}) Since we deal with  optimization problems having non-separable constraints we need to design sketch descent schemes based on new sampling rules for choosing the sketch matrix. We derive necessary and sufficient conditions on the sketching of the coordinates over which we minimize at each iteration in order  to have well-defined algorithms. To our knowledge, this is the first complete work on random sketch descent type algorithms for problems with more than one linear constraint. Our theoretical results consist of new optimization algorithms, accompanied with global convergence guarantees to solve a wide class of non-separable optimization problems.

(\textit{ii}) In particular, we propose a random sketch descent algorithm for solving such general  optimization problems. For the general case, when the objective function is smooth and non-convex, we prove sublinear rate in expectation for an appropriate optimality measure. In the smooth convex  case we obtain in expectation an $\epsilon$-accurate solution in at most ${\cal O}(1/\epsilon)$ iterations, while for strongly convex functions the method converges linearly.

(\textit{iii}) We also propose an accelerated random sketch descent algorithm. From our knowledge, this is the first analysis of an accelerated variant for optimization problems with non-separable linear constraints. In the smooth convex  case we obtain in expectation an $\epsilon$-accurate solution in at most  ${\cal O}(1/\sqrt{\epsilon})$ iterations. For strongly convex functions the new random sketch descent method converges linearly.

\noindent Let us emphasize the following points of our contribution. First, our sampling strategies are for multiple linear  constraints and thus very different from the existing methods designed only for one linear constraint. Second, our (accelerated) sketch descent schemes are the first  designed for this class of problems. Thirdly, our non-accelerated sketch descent algorithm covers as special cases some methods designed for problems with a single linear constraint and coordinate sketch. In these special cases, where convergence bounds are known, our theory recovers the best known bounds. We also illustrate,  that for some problems, random sketching  of the coordinates produces better results than  deterministic selection of them. Finally, our theory can be used to further develop other methods such as Newton-type schemes.

\vspace{0.1cm}

\textbf{Paper organization}.  The rest of this paper is organized as follows. Section 2 presents necessary and sufficient conditions for the sampling of the sketch matrix. Sections 3  provides a full convergence analysis of the random sketch descent method, while Section 4 extends this convergence analysis to an accelerated variant. In Section 5 we show  the benefits of general sketching over  fixed selection of coordinates.


\subsection{Problem formulation}
\noindent We consider the following large-scale general smooth  optimization problem with multiple linear coupled constraints:
\begin{align}
\label{problem}
& f^* = \min_{x \in \cR^n} f(x)  \quad  \text{s.t.} \quad  Ax=b,
\end{align}
where $f: \R^n \to \R$ is a general differentiable function and $A \in \R^{m \times n}$, with $m \ll n$,  is such that the feasible set is nonempty. The last condition is satisfied if e.g. $A$ has full row rank. The simplest case is when $m=1$, that is  we have a single linear constraint $a^Tx=b$ as considered in \cite{Bec:12,Nec:13,NecPat:14,NecNes:11}.  Note that we do not necessarily impose $f$ to be a convex function.   From the optimality conditions for our  optimization  problem \eqref{problem} we have that $x^* \in \R^n$ is a stationary point if there exists some $\lambda^* \in \R^m$ such that:
\[ \nabla f(x^*) + A^T \lambda^* =0 \quad \text{and} \quad A x^*=b. \]
However, if $f$ is convex, then any $x^*$ satisfying the previous optimality conditions is a global optimum for optimization problem \eqref{problem}. Let us define $X^*$ the set of these points. Therefore, $x^* \in X^*$ is a stationary (optimal) point if it is feasible and satisfies the condition:
\[ \nabla f(x^*) \in \range(A^T). \]


\subsection{Motivation}
We  present below several  important applications
from which the interest for problems of type \eqref{problem} stems.

\subsubsection{Page ranking}
\label{sec:pr}
This problem has many applications in google ranking, network control, data analysis \cite{IshTem:12,Nes:10,Nec:13}. For a given graph ${\cal G}$ let   $\bar E \in \R^{n \times n}$ be its incidence matrix, which is sparse. Define $E = \bar E \ \text{diag}(\bar E^T e)^{-1}$, where $e \in \R^n$ denotes
the vector with all entries equal to $1$. Since $E^T e = e$, i.e.
the matrix $E$ is column stochastic, the goal is to determine a
vector $x^*$ such that:  $E x^* = x^*$  and $e^T x^*  =1$. This problem can be written directly in  optimization form:
\begin{equation*}
\min_{x \in \R^n} \; f(x) \quad \left(:= \frac{1}{2} \norm{Ex - x}^2 \right) \quad \text{s.t.} \quad e^T x  = 1,
\end{equation*}
which is a particular case of our optimization problem  \eqref{problem} with $m=1$ and  $E$ sparse matrix.

\subsubsection{Machine learning}
\label{sec:ml}
Consider the optimization problem associated with  the loss minimization of linear predictors without regularization for a training data set  containing $n$ observations $a_i \in \R^m$ \cite{ShaZha:13}:
\[ \min_{w \in \R^m} \frac{1}{n} \sum_{i=1}^n \phi_i(w^T a_i).  \]
Here $\phi_i$ is some loss function, e.g.  SVM $\phi_i(z) = \max \{0, 1 - y_i z\}$, logistic regression $\phi_i(z) = \log(1 + \exp(-y_iz))$, ridge regression $\phi_i(z) = (z - y_i)^2$, regression with the absolute value $\phi_i(z) = |z- y_i|$ and support vector regression $\phi_i(z) = \max \{0, |z - y_i| - v\}$ for some predefined insensitivity parameter $v>0$. Moreover, in classification the labels $y_i \in \{-1, 1\}$, while in regression $y_i \in \R$.   Further, let $\phi_i^*$ denote  the Fenchel conjugate  of $\phi_i$. Then the dual of this problem becomes:
\[  \min_{x \in \R^n} f(x) \quad \left(=\frac{1}{n} \sum_{i=1}^n \phi_i^*(x_i)\right) \quad \text{s.t.} \quad Ax=0, \]
where $A=[a_1 \cdots a_n] \in \R^{m \times n}$. Clearly, this problem fits into our   model \eqref{problem}, with $m$ representing the number of features, $n$ the number of training data, and the objective function $f$ is separable.


\subsubsection{Portfolio optimization}
\label{sec:po}
In the basic Markowitz portfolio selection model \cite{Mar:52},  see also \cite{FroRei:15} for related formulations,  one assumes a set of $n$ assets, each with expected returns $\mu_i$, and a covariance matrix
$\Sigma \in \R^{n \times n}$, where $\Sigma_{(i,j)}$ is the covariance between returns of assets $i$ and $j$. The goal is to allocate a portion of the budget into different assets, i.e.  $x_i \in \R$  represents a portion of the wealth to be invested into asset $i$, leading to the first constraint: $\sum_{i=1}^n x_i = 1$.  Then, the expected return (profit) is $r  = \sum_{i=1}^n \mu_i x_i$ and the variance of the portfolio can be computed as $\sum_{i,j}  x_i x_j \Sigma_{(i,j)}$. The investor seeks to minimize risk (variance) and maximize the expected return, which is usually formulates as maximizing profit while limiting the risk or minimizing risk while requiring given expected return. The later formulation can be written as:
\begin{align*}
\min_{x \in \R^n}  x^T \Sigma x \quad \mbox{s.t.} \quad  \sum_{i=1}^n  \mu_i x_i = r, \;\; \sum_{i=1}^n x_i = 1,
\end{align*}
which clearly fits again into our optimization model   \eqref{problem} with $m=2$. We can further  assume that each asset belongs exactly to one class $c \in [C]$, e.g. financials, health care, industrials, etc. The investor would like to diversify its portfolio in such a way that the net allocation in class $c$ is $a_c$: $\sum_{i=1}^n x_i \textbf{1}_c(i) = a_c$ for all $c \in [C]$, where $\textbf{1}_c(i) = 1$ if asset $i$ is in class $c$ and $\textbf{1}_c(i) = 0$ otherwise. One can observer that in this case we get a similar problem as above, but with  $C$ additional linear constraints ($m=C+2$).


\section{Random sketching}
It is important to note that stochasticity enters in our algorithmic framework  through a user-defined distribution ${\sS}$ describing an ensemble of random matrices $S \in  \R^{n \times p}$ (also called \textit{sketch} matrices). We assume that $p \ll n$, in fact we usually require $p \sim {\cal O}(m)$ and note that $p$ can also be random (i.e. the ${\sS}$ can return matrices with different $p$).  Our schemes  and the underlying convergence theory support virtually all thinkable distributions. The choice of the distribution should ideally depend on the problem itself, as it will affect the convergence speed. However, for now we leave such considerations aside. The basic idea of our algorithmic framework consists of a given  feasible $x$,  a sample sketch matrix $S \sim {\sS}$ and a basic update of the form:
\begin{equation}
\label{eq:updateRuleIntro}
 x^+  = x + S d \quad \mbox{such that} \quad ASd = 0,
\end{equation}
where the requirement $A S d= 0$ ensures that the new point $x^+$ will remain feasible.  Clearly, one can choose a distribution ${\sS}$ which will not guarantee convergence to stationary/optimal point. Therefore, we need to impose some minimal necessary conditions for such a scheme to be well-defined. In particular, in order to avoid trivial updates, we need to choose $S \sim {\sS}$ such that the homogeneous  linear system $A S d=0$ admits also nontrivial solutions, that is we require:
\begin{align}
\label{eq:suf1}
\range(S) \cap \ker(A) \neq  0.
\end{align}
Moreover, since for any feasible $x^0$ an optimal solution satisfies $ x^* \in x^0 + \ker(A)$, it is necessary to require that with our distribution $\sS$ we can generate $\ker(A)$:
\begin{align}
\label{eq:suf2}
\ker(A) = \text{Span}\left(\cup_{S \sim {\sS}} \left(\range(S) \cap \ker(A)\right)\right).
\end{align}
Note that the geometric conditions \eqref{eq:suf1}-\eqref{eq:suf2} are only necessary  for a sketch  descent type scheme to be well-defined. However,  for a discrete probability distribution, having e.g. the property that $\Prob(S) > 0$ for all $S \sim {\sS}$, condition \eqref{eq:suf2} is also sufficient. In Section \ref{sect:prelim} (see Assumption \ref{ass_Z}) we will provide sufficient conditions for a general probability distribution ${\sS}$ in order to obtain well-defined algorithms based on such sketching. Below we provide several examples of distributions satisfying our geometric conditions \eqref{eq:suf1}-\eqref{eq:suf2}.



\subsection{Example 1 (finite case)} Let us consider a finite (or even countable) probability distribution ${\sS}$.  Further, let $x^0$ be a particular solution of the linear system $Ax=b$. For example, if $A^\dagger$ denotes the pseudo-inverse of the  matrix $A$, then we can take $x^0 = A^\dagger b$. Moreover, by the properties of the pseudo-inverse,  $I_n - A^\dagger A$ is a projection matrix onto $\ker(A)$, that is $\range(I_n - A^\dagger A) = \ker(A)$. Therefore, any solution of the linear system $Ax=b$ can be written  as:  \[ x= A^\dagger b + (I_n - A^\dagger A) y, \]
for any $y \in \R^n$. Thus, we may consider a finite (the extension to countable case is straightforward) set of matrices $\Omega = \{S_i \in \R^{n \times p}: \; i=1:N \}$ endowed with a probability distribution $P_i=\Prob(S=S_i)$ for all $i \in [N]$ and condition \eqref{eq:suf2} requires that the span of the image spaces of $\{S_i\}_{i=1}^N$ contains or is equal to $\range(I_n - A^\dagger A)$:
\begin{equation}
\label{eq:suf22}
\ker(A) = \range(I_n - A^\dagger A)  = \text{Span}\left({\displaystyle \cup}_{i: P_i >0} (\range(S_i) \cap \ker(A))\right).
\end{equation}
In particular, we have several choices for the sampling for a finite distribution:
\begin{enumerate}[nosep]
\item If one can compute  a basis for $\ker(A)$, then we can take as random sketch matrix $S_i \in \R^{n \times p}$ any  block of $p$ elements  of this basis endowed  with some probability $P_i=\Prob(S=S_i) >0$ (for the case $p=1$ the matrix  $S_i$ represents a single element of this basis generating $\ker(A)$). This sampling was also considered in \cite{FroRei:15}.  Clearly, in this particular case condition  \eqref{eq:suf1} and condition \eqref{eq:suf2} or equivalently  \eqref{eq:suf22} hold since  $\ker(A) = \text{Span}\left(\cup_{i=1}^N \range(S_i)\right)$.

\item However, for a general matrix $A$ it is difficult to compute a basis of $\ker(A)$. A simple alternative is to consider then any $p$-tuple ${\cal N} = (i_1 \cdots i_p) \in 2^{[n]}$, with $p > m$,  and the corresponding random sketch matrix $S_{\cal N} = [e_{i_1} \cdots e_{i_p}]$, where $e_i$ denotes the $i$th column of the identity matrix $I_n$, with some probability distribution $P_{\cal N}$ over the set of $p$-tuples in $2^{[n]}$.  It is clear that for this choice condition \eqref{eq:suf1}  and condition \eqref{eq:suf2} or equivalently \eqref{eq:suf22} also hold. For the particular case when we have a single linear coupled constraint, i.e. $a^T x=b$, we can take random  matrices $S_{(ij)}=[e_i \ e_j]$ also considered e.g.  in \cite{Nec:13}. This particular sketch matrix based on sampling columns  of the identity matrix leads to coordinate descent framework. However, the other examples (including those from Section \ref{sect:ex_inf}) show that our sketching framework is more general than coordinate descent.

\item Instead of working with the matrix $I_n$, as considered previously,  we can  take any orthogonal or full rank matrix ${\cal I} \in \R^{n \times n}$ having the columns ${\cal I}_{i}$ and thus forming a basis of $ \R^{n}$. Then, we can consider  $p$ tuples ${\cal N} =(i_1,\cdots,i_p) \in 2^{[n]}$, with $p>m$, and the corresponding random sketch  matrix  $S_{\cal N} =[{\cal I}_{i_1} \cdots {\cal I}_{i_p}]$, with some probability distribution $P_{\cal N}$ over the set of $p$-tuples in $2^{[n]}$. It is clear that for this choice of the random sketch matrices $S$ the  condition \eqref{eq:suf1} and condition \eqref{eq:suf2} or equivalently \eqref{eq:suf22} still hold.
\end{enumerate}


\subsection{Example 2 (infinite case)}
\label{sect:ex_inf}
Let us now consider a continuous (uncountable)  probability distribution ${\sS}$.
We can consider in this case two simple sampling strategies:
\begin{enumerate}[nosep]
\item If one can sample easily a random matrix $B$ such
that $\range(B) = \ker(A)$, then we can choose one or several columns from this matrix as a sketch matrix $S$. In this case $p \geq 1$.

\item Alternatively, we can sample random full rank matrices in  $\R^{n \times n}$
and then define $S$ to be random $p>m$ columns. Furthermore, since it is known that random Gaussian matrices are full rank almost surely, then we can define $S \sim \mathcal{N}^{n\times p}$ to be a random Gaussian matrix. Similarly, we can consider random uniform matrices and define e.g. $S \sim \mbox{Unif}(-1, 1)^{n \times p}$.
\end{enumerate}

\noindent A sufficient condition for a well-defined  sampling in  the infinite case  is to ensure that in expectation one can move in any direction in $\ker(A)$.
Considering the general update rule \eqref{eq:updateRuleIntro}, we see that if we sample $S\in \R^{n\times p}$, then our update can be only $S d$ for some $d \in \R^p$.
Now, we also have a condition, that we want to stay in the $\ker(A)$, and therefore $d$ cannot be anything, but has to be chosen such that $S d \in \ker(A)$. Now, this restricts the set of possible $d$'s to be such that:
$$
ASd = 0 \qquad \Rightarrow \qquad
d = (I_p - (AS)^\dagger (AS) ) t$$
for some $t \in \R^p$. Recall, that we allow $p$ to be also random, hence to derive the sufficient condition we need to have some quantity with dimension independent on $p$.
Note that  each $t \in \R^p$ can be represented as $S^T t'$ for some (possibly non-unique) $t'$. Therefore, we see that if $S$ is sampled, then we can move in the direction:
$$ Sd = S (I - (AS)^\dagger (AS) ) t = S (I - (AS)^\dagger (AS) ) S^T t', $$
hence, we have the ability to move in $\range \big( S (I - (AS)^\dagger (AS) ) S^T \big )$. Now, the  condition to be able to move in $\ker(A)$ can be expressed as
requiring that on expectation we can move anywhere in $\ker(A)$:
\begin{equation}
\label{eq:SNcondition}
\range \left( \Exp \left[ 
{ S  (I - (AS)^\dagger(AS) ) S^T}
\right]  \right) = \ker(A),
\end{equation}
provided that  the expectation exists and is finite. Note, that
this condition  must hold also  for a discrete probability distribution, however  the condition \eqref{eq:suf2}  is  more intuitive in the discrete case. In the next section we provide algebraic sufficient conditions on the sampling for a general  probability distribution ${\sS}$ in order to obtain well-defined algorithms.


\subsection{Sufficient conditions for sketching}
\label{sect:prelim}
It is well known that in order to derive any reasonable convergence guarantees for a minimization scheme we need to impose some smoothness property on the objective function.  Therefore, throughout the paper  we consider the following blanket assumption on the smoothness of $f$:
\begin{assumption}
\label{ass_lip}
For any feasible $x^0$ there exists a positive semidefinite matrix $M$ such that $M$ is positive definite on $\ker(A)$ and the following inequality holds:
\begin{equation}
\label{eq:M}
f(y) \leq f(x) + \ve{\nabla f(x)}{y-x} + \frac12 (y-x)^T M (y-x), \quad \forall x,y \in x^0 + \ker(A).
\end{equation}
\end{assumption}

%

\noindent Note that for a general (possibly non-convex) differentiable function $f$ the smoothness  inequality  \eqref{eq:M} does not imply  that the objective function  $f$  has Lipschitz continuous gradient, so our assumption is less conservative than requiring Lipchitz gradient assumption. However, when $f$ is convex the condition \eqref{eq:M} is equivalent  with Lipschitz continuity of the gradient of $f$ on $x^0 + \ker(A)$ \cite{Nes:04}.  In particular,  if $M=L \cdot I_n$ for some Lipschitz constant $L>0$ we recover the usual definition of Lipschitz  continuity of the gradient for the class of convex functions.  Our sketching methods derived below are based on \eqref{eq:M}  and therefore they have the capacity to utilize curvature information.  In particular, if the objective function is quadratic, our methods can  be  interpreted  as    novel  extensions to more general optimization models  of  the  recently introduced iterative Hessian sketch method for minimizing self-concordant objective functions \cite{pilanci2017newton}. The reader should also note that we can further relax  the condition \eqref{eq:M} and require smoothness of $f$ with respect to any image space generated by the random matrix $S$. More precisely, it is sufficient to assume that for any sample  $S \sim {\sS}$ there exists a positive semidefinite matrix $M_S$ such that $M_S$ is positive definite on $\ker(A)\cap \range(S)$ and the following inequality holds:
\begin{equation*}
f(y) \leq f(x) + \ve{\nabla f(x)}{y-x} + \frac12 (y-x)^T M_S (y-x) \quad \forall x,y \in x^0+\ker(A) \ \wedge  \ x-y \in \range(S).
\end{equation*}
Note that if $M_S = M$  for all $S$ we recover the relation \eqref{eq:M}. For simplicity of the exposition in the sequel we assume \eqref{eq:M} to be valid, although all our convergence results can be also extended under previous smoothness condition given in terms of $M_S$.

From the above discussion  it is clear that  the direction $d$ in our basic update \eqref{eq:updateRuleIntro}  needs to be in the kernel of matrix $AS$. However, it is well known that the projection onto $\ker(A S)$ is given by the projection matrix:
\[ P_S=  I_p - (AS)^\dagger (AS). \]
Clearly, we have $\ker(AS) = \range(P_S)$. Let us further define the matrix:
\begin{equation}
\label{eq:defZT}
Z_S = S P_S  (P^T_S  S^T M S P_S)^\dagger P^T_S S^T \in \R^{n \times n}.
\end{equation}
The matrix $Z_S$ has some important properties that we will derive below since they are useful for algorithm development. First we observe that:
\begin{lemma}
\label{lema1}
For any probability distribution ${\sS}$ the matrix $Z_S$ is symmetric ($Z_S=Z_S^T$), positive semidefinite ($Z_S \succeq 0$), and  for any $u \in \range(A^T)$ we have $ Z_S u = 0$, that is $\range(A^T) \subseteq \ker(Z_S)$. Moreover, the following identity holds  $Z_S M Z_S= Z_S$.
\end{lemma}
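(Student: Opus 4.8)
The plan is to introduce the shorthand $B = S P_S$ and $W = P_S^T S^T M S P_S = B^T M B$, so that $Z_S = B W^\dagger B^T$. Since $M \succeq 0$ by Assumption~\ref{ass_lip}, the matrix $W = B^T M B$ is symmetric and positive semidefinite; consequently its Moore--Penrose pseudo-inverse $W^\dagger$ is itself symmetric and positive semidefinite, a fact I will use repeatedly.

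First I would settle symmetry and positive semidefiniteness together. Transposing, $Z_S^T = B (W^\dagger)^T B^T = B W^\dagger B^T = Z_S$, using $(W^\dagger)^T = (W^T)^\dagger = W^\dagger$. For positive semidefiniteness, given any $v \in \R^n$ set $w = B^T v$; then $v^T Z_S v = w^T W^\dagger w \geq 0$ because $W^\dagger \succeq 0$. Hence $Z_S = Z_S^T \succeq 0$.

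Next, for the kernel inclusion, I would recall that $P_S = I_p - (AS)^\dagger(AS)$ is the orthogonal projector onto $\ker(AS)$, so that $\range(P_S) = \ker(AS)$ and in particular $A S P_S = 0$. Transposing gives $P_S^T S^T A^T = (A S P_S)^T = 0$. Therefore, for any $u = A^T \lambda \in \range(A^T)$ we obtain $B^T u = P_S^T S^T A^T \lambda = 0$, and hence $Z_S u = B W^\dagger (B^T u) = 0$, which proves $\range(A^T) \subseteq \ker(Z_S)$.

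Finally, the identity $Z_S M Z_S = Z_S$ is where the single genuine idea appears, though it is short. Grouping the factors carefully,
\[ Z_S M Z_S = B W^\dagger \, (B^T M B) \, W^\dagger B^T = B W^\dagger W W^\dagger B^T, \]
since the middle block $B^T M B$ is exactly $W$. Invoking the Moore--Penrose identity $W^\dagger W W^\dagger = W^\dagger$ collapses this to $B W^\dagger B^T = Z_S$. The only subtlety, and the step I expect to be the crux, is recognizing that the interior $M$ recombines with the surrounding $B^T(\cdot)B$ to reconstruct $W$; once that is seen, everything reduces to a single pseudo-inverse axiom and no spectral or rank argument is required. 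I would note in passing that the positive definiteness of $M$ on $\ker(A)\cap\range(S)$ is not needed for any of these four properties, which are purely algebraic consequences of the Penrose relations; that hypothesis instead serves later to guarantee that $Z_S$ acts as a genuine inverse on the relevant subspace rather than a degenerate object.
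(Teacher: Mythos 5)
Your proposal is correct and follows essentially the same route as the paper's proof: both hinge on the two Penrose identities, using $(AS)(AS)^\dagger(AS)=AS$ (equivalently, your observation $ASP_S=0$) for the kernel inclusion $\range(A^T)\subseteq\ker(Z_S)$, and $W^\dagger W W^\dagger = W^\dagger$ applied to $W = P_S^T S^T M S P_S$ to collapse $Z_S M Z_S$ to $Z_S$. Your treatment is if anything slightly more complete, since the paper merely asserts symmetry and positive semidefiniteness while you verify them via $(W^T)^\dagger=(W^\dagger)^T$ and $W^\dagger\succeq 0$.
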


\begin{proof}
It is clear that $Z_S$ is positive semidefinite matrix since $M$ is assumed positive semidefinite. It is well-known that for any given matrix $B$ its pseudo-inverse satisfies $B B^\dagger B= B$ and $B^\dagger B B^\dagger = B^\dagger$. Now, for the first statement given the expression of $Z_S$ it is sufficient to prove that $P^T_S  S^T u = 0$ for $u \in \range(A^T)$. However, if $u \in \range(A^T)$ then there exists $y$ such that $u = A^Ty$ and consequently
we have:
\begin{align*}
P^T_S  S^T u & = P^T_S  S^T  A^T y = (I - (AS)^\dagger (AS))^T ( A S)^T y \\
&= [(AS) (I - (AS)^\dagger (AS))]^Ty \\
& = (( A S) - (AS) (AS)^\dagger ( A S))^T y =0,
\end{align*}
where in the last equality we used the first property of  pseudo-inverse
$(A S) (A S)^\dagger (A S) = A S$. For the second part of the lemma we use the expression of $Z_S$ and the second property of the pseudo-inverse applied to the matrix   $ (P^T_S  S^T M S P_S)^\dagger$, that is:
\[ Z_S M Z_S =  [S P_S  (P^T_S  S^T M S P_S)^\dagger P^T_S S^T] M [S P_S  (P^T_S  S^T M S P_S)^\dagger P^T_S S^T] = Z_S, \]
which concludes our statements.
\end{proof}

\noindent Now, since the random matrix $Z_S$ is positive semidefinite, then we can define its expected value, which is also a symmetric positive semidefinite matrix:
\begin{equation}
\label{eq:def_of_Z}
Z = \E_S [Z_S].
\end{equation}
In the sequel we also consider the following assumption on the expectation matrix $Z$:
\begin{assumption}
\label{ass_Z}
We  assume that the distribution ${\sS}$ is chosen such  that $Z_S$ has a finite mean, that is the matrix  $Z$ is well defined, and positive definite (notation $Z \succ 0$) on $\ker(A)$.
\end{assumption}
As we will see below, Assumption \ref{ass_Z} is a sufficient condition on the probability distribution ${\sS}$ in order to ensure convergence of our algorithms that will be defined in the sequel. To our knowledge this algebraic characterization of the probability distribution defining  the sketch matrices $S$ for problems with multiple non-separable linear constraints seems to be new.

Note that the necessary condition \eqref{eq:suf1} holds provided that $Z_S \not = 0$.  Indeed, from Lemma \eqref{lema1}  we have $\range(A^T) \subseteq \ker(Z_S)$ for all $S \sim \sS$ and $\ker(Z_S) \perp \range(Z_S)$. Therefore, we get that $\range(A^T) \perp \range (Z_S)$ and we know that  $\range(A^T) \perp \ker(A)$. Let $z \in \range (Z_S) \subseteq \R^n$, $z \not = 0$, then there exists unique $z_1 \in \range(A^T)$ and $z_2 \in \ker(A)$ such that $z=z_1 + z_2$. Moreover, we have $z \perp \range(A^T)$, i.e. $z \perp z_1$, which implies that $\langle z_1+z_2, z_1\rangle = \|z_1\|^2 + 0 =0$. Thus, $z_1 = 0$ and $z \in \ker(A)$. From the last  relation, we get:
\[  \range(Z_S) \subseteq \ker(A). \]
Moreover, from the definition of the symmetric matrix $Z_S$ we have $\range(Z_S) \subseteq \range(S)$, which combined with the previous relation leads to:
\[  \range(Z_S) \subseteq \ker(A) \cap \range(S),  \]
and consequently proving that the condition  \eqref{eq:suf1} holds provided that $Z_S \not = 0$. Moreover, we can show  that the necessary condition  \eqref{eq:suf2} holds if  $Z$ satisfies Assumption \ref{ass_Z}:

\begin{lemma}
\label{lem3}
Under  Assumption \ref{ass_Z}  the necessary condition \eqref{eq:suf2} is valid. Additionally, the following identity takes place:
\[  \range(A^T) = \ker(Z) \]
and consequently $Z^\dagger Z$ is a  projection matrix onto $\ker(A)$, where $Z^\dagger$ denotes the pseudo-inverse of  the matrix $Z$.
\end{lemma}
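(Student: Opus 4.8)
The plan is to establish the range identity $\range(A^T) = \ker(Z)$ first, and then read off both the span condition \eqref{eq:suf2} and the projection claim as consequences. For the inclusion $\range(A^T) \subseteq \ker(Z)$ I would simply push the corresponding sample-wise property through the expectation: Lemma \ref{lema1} gives $Z_S u = 0$ for every $u \in \range(A^T)$ and every $S \sim \sS$, so $Z u = \E_S[Z_S u] = 0$, whence $u \in \ker(Z)$. For the reverse inclusion $\ker(Z) \subseteq \range(A^T)$ I would use the orthogonal splitting $\R^n = \range(A^T) \oplus \ker(A)$, valid because $\range(A^T) = \ker(A)^\perp$. Writing any $v \in \ker(Z)$ as $v = v_1 + v_2$ with $v_1 \in \range(A^T)$ and $v_2 \in \ker(A)$, the first inclusion gives $Z v_1 = 0$, so $0 = Z v = Z v_2$ and hence $v_2^T Z v_2 = 0$; the positive definiteness of $Z$ on $\ker(A)$ from Assumption \ref{ass_Z} then forces $v_2 = 0$, i.e. $v = v_1 \in \range(A^T)$. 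This yields $\range(A^T) = \ker(Z)$, and since $Z$ is symmetric, taking orthogonal complements gives the companion identity $\range(Z) = \ker(A)$.

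For the span condition \eqref{eq:suf2} I would combine this range identity with the containment $\range(Z_S) \subseteq \range(S) \cap \ker(A)$ already established just before the statement. Since each $\range(Z_S) \subseteq \ker(A)$, the subspace $V := \text{Span}(\cup_{S \sim \sS} \range(Z_S))$ satisfies $V \subseteq \ker(A)$, so it suffices to prove $\ker(A) \subseteq V$. I would argue by orthogonal complements: if $v \perp V$, then $v \perp \range(Z_S)$, hence $Z_S v = 0$ by symmetry of $Z_S$, for every $S$ in the support of $\sS$; taking expectation gives $Z v = 0$, so $v \in \ker(Z) = \range(A^T) = \ker(A)^\perp$. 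Thus $V^\perp \subseteq \ker(A)^\perp$, i.e. $\ker(A) \subseteq V$, and therefore $V = \ker(A)$. Because $\range(Z_S) \subseteq \range(S) \cap \ker(A) \subseteq \ker(A)$, this sandwiches $\text{Span}(\cup_{S \sim \sS}(\range(S) \cap \ker(A)))$ between $V = \ker(A)$ and $\ker(A)$, which is exactly \eqref{eq:suf2}.

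Finally, the projection claim follows from a standard property of the pseudo-inverse: for any matrix $Z$, the product $Z^\dagger Z$ is the orthogonal projector onto $\range(Z^T)$, and since $Z$ is symmetric this is the orthogonal projector onto $\range(Z) = \ker(A)$ by the identity derived above. The step I expect to be the main obstacle is the passage from the mean matrix $Z$ back to the individual samples $Z_S$ in the span argument: the chain $v \perp \range(Z_S) \Rightarrow Z_S v = 0 \Rightarrow Z v = 0$ must be justified for \emph{almost every} $S$ in the continuous (uncountable) case, so some care is needed to ensure the essential span of the $\range(Z_S)$ coincides with $\ker(A)$ and that the matrix-valued integral genuinely annihilates $v$. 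Everything else reduces to the finite-dimensional linear algebra of symmetric positive semidefinite matrices.
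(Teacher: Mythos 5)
Your proof is correct, and it rests on the same toolkit as the paper's (Lemma \ref{lema1}, positive definiteness of $Z$ on $\ker(A)$ from Assumption \ref{ass_Z}, the orthogonal splitting $\R^n=\range(A^T)\oplus\ker(A)$, and the standard fact that $Z^\dagger Z$ is the orthogonal projector onto $\range(Z^T)$), but you organize the deductions in the opposite order and, in two places, more rigorously. The paper proves \eqref{eq:suf2} first --- from $\ker(A)\cap\ker(Z)=\{0\}$ together with the \emph{asserted} inclusion $\range(Z)\subseteq\text{Span}\left(\cup_{S\sim\sS}\range(Z_S)\right)$ --- and only then establishes $\range(A^T)=\ker(Z)$, obtaining the reverse inclusion by contradiction from the informal statement that $Z\succ 0$ on $\R^n\setminus\range(A^T)$, which is not a subspace. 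You instead prove the kernel identity first, and your decomposition $v=v_1+v_2$ with $Zv_1=0$ and $v_2^TZv_2>0$ unless $v_2=0$ is exactly the computation that makes the paper's ``positive definite off $\range(A^T)$'' claim precise. Likewise, your orthogonal-complement step ($v\perp\range(Z_S)$ for all $S$ in the support $\Rightarrow Z_Sv=0$ by symmetry $\Rightarrow Zv=0$) is in substance a proof of the inclusion $\range(Z)\subseteq\text{Span}\left(\cup_{S\sim\sS}\range(Z_S)\right)$ that the paper uses without justification; both routes share the same measure-theoretic caveat in the continuous case (the pointwise statements need only hold for almost every $S$), which you correctly flag rather than leave implicit. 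Net effect: the mathematical content is identical, but your ordering makes \eqref{eq:suf2} a clean corollary of $\range(A^T)=\ker(Z)$ rather than an independent first step, at the cost of no additional machinery.
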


\begin{proof}
Note that Assumption \ref{ass_Z} holds, i.e. $Z \succ 0$ on $\ker(A)$, if and only if  $Z \succ 0$ on  $\R^n \setminus \range(A^T)$. Moreover, for any non-zero $u \in \ker(A)$, we have $Z u \not =0$, that is $u \not \in \ker(Z)$. In conclusion, we get $\ker(A) \subseteq \R^n \setminus \ker(Z)$. But, $\range(Z) \subseteq \text{Span}(\cup_{S \sim {\sS}} \range(Z_S))$, from which we can conclude \eqref{eq:suf2}.

For the second part we use again Lemma \eqref{lema1}:  $\range(A^T) \subseteq \ker(Z_S)$ for all $S \sim {\sS}$. This means that $ \range(A^T) \subseteq \cap_{S \sim {\sS}}\ker(Z_S) \subseteq \ker(Z)$. The other inclusion follows by reducing to absurd. Assume that there exists $u \not \in \range(A^T)$ such that $Zu=0$, or equivalently $Zu=0$ for some $u \in \R^n \setminus \range(A^T)$. However, note that   $Z \succ 0$ on $\ker(A)$ if and only if  $Z \succ 0$ on  $\R^n \setminus \range(A^T)$, which contradicts our assumption. In conclusion, the second statement holds. Finally, it is well-known that $Z^\dagger Z$ is an orthogonal projector onto $\range(Z^T)$ and the rest follows from standard algebraic arguments.
\end{proof}

\noindent {\bf The primal-dual "norms"}.  Since the matrix $Z_S$ is positive semidefinite, matrix $Z$ is also positive semidefinite. Moreover, from Lemma \ref{lema1} we conclude that  $\range(A^T) \subseteq \ker(Z)$. In the sequel we assume that $S \sim {\sS}$ such that $Z$ is a positive definite matrix on  $\ker(A)$ and consequently on $\R^n \setminus \range(A^T)$ (see Assumption \ref{ass_Z}). Then, we can define  a norm induced by the matrix $Z$ on $\ker(A)$ or even
$\R^n \setminus \range(A^T)$. This  norm  will be used subsequently for measuring distances in the  subspace $\ker(A)$. More precisely, we define the \textit{primal norm} induced by the positive semidefinite matrix $Z$ as:
\[ \| u \|_{Z} = \sqrt{u^T {Z} u} \quad \forall u \in \R^{n}. \]
Note that $\| u \|_{Z} = 0$ for all $u \in \range(A^T)$ (see Lemma \ref{lema1}) and
$\| u \|_{Z} > 0$ for all $u \in \R^n \setminus \range(A^T)$. On the subspace $\ker(A)$ we introduce the extended dual norm:
\begin{align*}
 \| x \|_{Z}^* = \max_{u \in \R^{n}: \|u \|_{Z} \leq 1} \langle x,
u \rangle  \quad \forall x \in \ker(A).
\end{align*}

\noindent Using the definition of conjugate norms, the
Cauchy-Schwartz inequality holds:
\begin{align}
\label{eq:CS}
\langle u, x \rangle \leq \| u \|_{Z} \cdot \|
x \|_{Z}^* \quad \forall x \in \ker(A), \;  u \in \R^{N}.
\end{align}

\begin{lemma}
\label{lem4}
Under Assumption \ref{ass_Z} the primal and dual norms have the following expressions:
\begin{align}
\label{norm_pseudo}
& \| u \|_{Z} = \sqrt{u^T {Z} u}, \quad  \| x \|_{Z}^* = \sqrt{x^T Z^\dagger x} \quad \forall u \in \R^{n}, \quad \forall x \in \ker(A).
\end{align}
\end{lemma}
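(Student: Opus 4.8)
The primal-norm formula is nothing but the definition of $\|\cdot\|_{Z}$, so the entire content of the lemma is the dual identity $\|x\|_{Z}^* = \sqrt{x^T Z^\dagger x}$ for $x \in \ker(A)$. The plan is to reduce the (a priori unbounded) maximization defining the dual norm to a maximization over the subspace $\ker(A)$, on which $Z$ is genuinely positive definite, and then to run the classical dual-norm-of-a-quadratic-form argument with $Z^\dagger$ playing the role of $Z^{-1}$. The two structural facts I will lean on are supplied by Lemma \ref{lem3}: $\ker(Z) = \range(A^T)$, and hence, since $Z$ is symmetric, $\range(Z) = \ker(Z)^\perp = \ker(A)$; moreover $Z^\dagger Z$ is the orthogonal projector onto $\ker(A)$.

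First I would handle the degeneracy of the feasible set. Since $\|u\|_{Z} = 0$ on $\range(A^T)$, the set $\{u : \|u\|_{Z} \le 1\}$ is unbounded, so it is not immediately clear the maximum is finite. To see that it is, decompose an arbitrary $u \in \R^n$ orthogonally as $u = u_1 + u_2$ with $u_1 \in \range(A^T)$ and $u_2 \in \ker(A)$. Because $Z u_1 = 0$ we have $\|u\|_{Z}^2 = u_2^T Z u_2$, so the constraint only restricts $u_2$; and because $x \in \ker(A) \perp \range(A^T)$ we have $\langle x, u \rangle = \langle x, u_2 \rangle$, so the objective only sees $u_2$. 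Hence the defining maximization is equivalent to
\[
\|x\|_{Z}^* = \max_{u \in \ker(A):\, u^T Z u \le 1} \langle x, u \rangle,
\]
a maximization over $\ker(A)$, where by Assumption \ref{ass_Z} the matrix $Z$ is positive definite.

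On $\ker(A)$ the bilinear form $\langle a, b \rangle_{Z} := a^T Z b$ is a genuine inner product, and $Z^\dagger$ inverts $Z$ there: indeed $Z^\dagger x \in \range(Z) = \ker(A)$ for every $x$, while $Z Z^\dagger x = x$ for $x \in \range(Z) = \ker(A)$. Writing $x = Z(Z^\dagger x)$ for $x \in \ker(A)$ gives $\langle x, u \rangle = \langle Z^\dagger x, u \rangle_{Z}$, and Cauchy--Schwarz in $\langle \cdot, \cdot \rangle_{Z}$ together with the pseudo-inverse identity $Z^\dagger Z Z^\dagger = Z^\dagger$ yields
\[
\langle x, u \rangle \le \|Z^\dagger x\|_{Z}\, \|u\|_{Z} \le \sqrt{x^T Z^\dagger x},
\]
so $\sqrt{x^T Z^\dagger x}$ is an upper bound on the objective. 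To show it is attained, I would plug in $u = Z^\dagger x / \sqrt{x^T Z^\dagger x}$: using $Z^\dagger Z Z^\dagger = Z^\dagger$ one checks $u^T Z u = 1$ (so $u$ is feasible) and $\langle x, u \rangle = \sqrt{x^T Z^\dagger x}$, which closes the argument.

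The only genuinely delicate point is the first step, namely making sense of the maximum despite $Z$ being singular on all of $\R^n$; the orthogonal splitting and the restriction $x \in \ker(A)$ are what rescue it. Everything downstream is the standard computation, the one piece of bookkeeping being that $Z^\dagger$ really acts as the inverse of $Z$ on $\ker(A)$ — which is exactly what the range/kernel identities from Lemma \ref{lem3} and the relation $Z^\dagger Z Z^\dagger = Z^\dagger$ guarantee.
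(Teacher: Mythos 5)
Your proof is correct, and it takes a genuinely different route from the paper's. You work directly on the primal side: the orthogonal splitting $u = u_1 + u_2$ along $\range(A^T)\oplus\ker(A)$, combined with $\ker(Z)=\range(A^T)$ from Lemma \ref{lem3}, collapses the a priori unbounded maximization to one over $\ker(A)$, where you run the Cauchy--Schwarz argument in the $Z$-inner product and exhibit the maximizer $u = Z^\dagger x/\sqrt{x^T Z^\dagger x}$ explicitly (add the one-line remark that $x=0$ is trivial, since your maximizer is undefined there). The paper instead goes through Lagrangian duality: it rewrites the constraint $Au=0$ as $u^T A^T A u \le 0$, interchanges min and max, and arrives at the intermediate representation $\| x \|_{Z}^* = \min_{\zeta \geq 0} \sqrt{\langle (Z + \zeta A^T A)^{-1} x, x\rangle}$ of \eqref{primalnorm}, which it then evaluates through a joint eigendecomposition of $Z$ and $A^TA$ adapted to $\ker(A)$ and $\range(A^T)$. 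Your argument is more elementary and self-contained: it needs neither the min--max interchange (which the paper leaves unjustified) nor the invertibility of $\mu Z + \nu A^T A$, and it identifies the extremal $u$ rather than only the extremal value. What the paper's detour buys is the regularized formula \eqref{primalnorm} itself, which expresses the dual norm through ordinary inverses $(Z+\zeta A^T A)^{-1}$ instead of the pseudo-inverse and is of some independent interest. Both proofs invoke Assumption \ref{ass_Z} at the same essential point, namely to guarantee, via Lemma \ref{lem3}, that $Z$ is invertible on $\ker(A)$ with $Z^\dagger$ acting as its inverse there.
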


\begin{proof}
\noindent Let us consider any   $\hat u \in \range(A^T)$.  Then, the dual norm can be computed
for any $x \in \ker(A)$ as follows:
\begin{align*}
& \| x\|_{Z}^*  = \max_{u \in \R^{n}: \; \langle {Z} u, u \rangle \leq 1} \langle x, u \rangle = \max_{u: \langle {Z} \left( u -  \hat u \right), u - \hat u  \rangle \leq 1} \langle x, u - \hat u  \rangle  \\
& = \max_{u: \langle {Z} u, u \rangle \leq 1, u \in \ker(A)}
\langle x, u \rangle = \max_{u: \langle {Z} u, u \rangle \leq 1, Au=0 } \langle x,
u \rangle \\
&  = \max_{u: \langle {Z} u, u \rangle \leq 1, u^T A^T A u \leq
0 } \langle x, u \rangle \\
& = \min_{\nu, \mu \geq 0} \max_{u \in \R^n}  [\langle x, u \rangle + \mu(1 -
\langle {Z} u, u \rangle) - \nu \langle  A^T A u, u \rangle]  \\
& =  \min_{\nu, \mu \geq 0} \mu + \langle (\mu {Z} + \nu A^T A)^{-1} x, x \rangle  =
\min_{\nu \geq 0} \min_{\mu \geq 0} [\mu + \frac{1}{\mu} \langle
({Z} + \frac{\nu}{\mu} A^T A)^{-1} x, x \rangle]  \\
& = \min_{\zeta \geq 0} \sqrt{\langle ( {Z} + \zeta A^T A)^{-1} x,
x \rangle}.
\end{align*}

\noindent We obtain an extended   dual norm  that is
well defined  on  the subspace $\ker(A)$:
\begin{align}
\label{primalnorm}
 \| x \|_{Z}^* = \min_{\zeta \geq 0} \sqrt{\langle \left(
{Z} + \zeta A^T A \right)^{-1}  x, x \rangle} \quad
\forall x \in \ker(A).
\end{align}

\noindent The eigenvalue decomposition  of the positive
semidefinite matrix $Z$ can be written as ${Z} = U \text{diag}(\lambda_1, \cdots, \lambda_r, 0, \cdots, 0) U^T$, where $\lambda_i$ are its
positive eigenvalues and the columns of orthogonal matrix $U=[U_{ker} \; U_{range}]$ are the corresponding eigenvectors, $U_{ker}$ generating $\ker(A)$ and $U_{range}$ generating $\range(A^T)$. Then, we have:
\[ ({Z} + \zeta A^T A)^{-1} = U \text{diag}(\lambda_1, \cdots, \lambda_r, \zeta \lambda_{r+1}, \cdots, \zeta \lambda_n)^{-1} U^T, \]
where $\lambda_{r+1}, \cdots, \lambda_n$ are the nonzero eigenvalues of symmetric matrix $A^TA$. From \eqref{primalnorm} it follows  that our newly defined dual norm has the following closed form:
\begin{align*}
\| x \|_{Z}^* = \sqrt{x^T Z^\dagger  x} \quad \forall x \in \ker(A),
\end{align*}
where $Z^\dagger$ denotes the pseudoinverse of  matrix $Z$.
\end{proof}

The following example shows that the 2-coordinate sampling proposed in  \cite{NecNes:11}
(in the presence of  a single linear constraint $m=1$) is just a special case of the sketching analyzed in this paper:
\begin{example}
\label{example}
Let us consider the following  optimization problem:
\[ f(x) = \sum_{i=1}^n f_i(x_i) \quad \mbox{subject to} \quad  \sum_{i=1}^n x_i = b.  \]
In this case, assuming that each scalar function $f_i$ has $L_i$ Lipschitz continuous gradient, then $M = \text{diag}(L_1, \cdots, L_n)$. Moreover, we can take any random pair of coordinates $(i,j)$ with $i,j=1:n, \; i<j$ and consider the particular sketch matrix $S_{(ij)} = [e_i \; e_j]$. Note that, for simplicity, we focus here on Lipschitz dependent  probabilities for choosing the pair $(i,j)$, that is $P_{(i,j)} = (L_i+L_j)/(n-1)L$ with $L = \sum_{i=1}^n L_i$. Following basic derivations  we get:
\begin{align}
\label{eq:Zij}
& Z_{(ij)} = \frac{1}{L_i+L_j} S_{(ij)} \begin{bmatrix} 1 & -1\\ -1 & 1\end{bmatrix} S_{(ij)}^T = \frac{1}{L_i+L_j} (e_i - e_j) (e_i - e_j)^T, \nonumber \\
& Z= \frac{n}{(n-1)L} \left(I_n - \frac{1}{n} ee^T\right),  \quad    \quad  Z^\dagger= \frac{(n-1)L}{n} \left(I_n - \frac{1}{n} ee^T\right).
\end{align}
Clearly, $Z \succ 0$ on $\ker(A)$ and thus Assumption \ref{ass_Z} holds. Similarly, we can compute explicitly $Z$ and $Z^\dagger$ for the fixed selection of the pair of coordinates $(i,i+1)$ with $i=1:n-1$.
\end{example}



\section{Random Sketch Descent (\RSD\!\!) algorithm}
For the large-scale optimization problem \eqref{problem} methods which scale cubically, or even quadratically, with the problem size $n$  is already out of the question; instead, linear scaling of the computational costs per-iteration is desired. Clearly, optimization problem \eqref{problem} can be solved using projected  first order methods, such as gradient or accelerated gradient, both algorithms having comparable cost per iteration \cite{Nes:04}. In particular, both methods require the computation of the full gradient $\nabla f(x)$ and finding  the optimal solution of a subproblem with quadratic objective over the subspace $\  ker(A) \subset \R^n$:
\begin{align}
\label{eq:gsubproblem}
\min_{d \in \R^n: A d=0}  f(x) + \ve{\nabla f(x)}{d} + \frac12 \ d^T M  d.
\end{align}
For example, for the projected gradient method since we assume $M$  positive definite on $\ker(A)$ (see Assumption \ref{ass_lip}), then the previous subproblem has a unique solution leading to  the following gradient iteration:
\begin{align}
\label{eq:gupdate}
x^{k+1}_G = x^k_G - Z_{I_n} \nabla f(x^k_G),
\end{align}
where $Z_{I_n} \in \R^{n \times n}$ is obtained by replacing $S= I_n$ in the definition of the matrix $Z_S$. However, for very large $n$ even the first iteration is not computable, since the cost of computing $Z_{I_n}$ is cubic in the problem dimension (i.e. of order ${\mathcal{O}(n^3)}$ operations) for a dense matrix $M$.   Moreover, since usually $Z_{I_n}$ is a dense matrix regardless of the matrix $M$ being dense or spare, the cost of the subsequent iterations is quadratic  in the problem size $n$ (i.e. ${\mathcal{O}(n^2)}$). Therefore, the development of new optimization algorithms that target linear cost per iteration and nearly dimension-independent convergence rate is needed. These properties can be achieved using the sketch descent framework. In particular, let us assume that the initial iterate $x^0$ is a feasible point, i.e. $Ax^0 = b$. Then, the first algorithm we propose, Random Sketch Descent (\RSD\!\!) algorithm,  chooses at each iteration a random sketch matrix $S \in \R^{n \times p}$ according to the probability distribution ${\sS}$ and  find a new direction solving a  simple subproblem (see  Algorithm~\ref{alg:rcd} below).
\begin{algorithm}[h!]
\caption{Algorithm \RSD}
\label{alg:rcd}
\begin{algorithmic}[1]
\STATE choose $x^0 \in \R^n$ such that $Ax^0 = b$
\FOR { $k \geq 0$ }
\STATE Sample $S \sim {\sS}$ and perform the update:
\STATE  $x^{k+1} = x^k -  Z_S \nabla f(x^k)$.
\ENDFOR
\end{algorithmic}
\end{algorithm}
\noindent Let us explain the update rule of our algorithm \RSD\!. Note that the new direction in the update $x^{k+1}= x^k + S d^k$  of \RSD  is computed from a subproblem with quadratic objective over the subspace  $\ker(AS) \subset \R^p$ that it is simpler than subproblem  \eqref{eq:gsubproblem} corresponding to the full gradient:
\[ d^k =\displaystyle  \arg \min_{d \in \R^p: A S d=0}  f(x^k) + \ve{\nabla f(x^k)}{Sd} + \frac12 \ d^T  S^T M S d. \]
We observe that from the feasibility condition $A S d = 0$ we can compute $d$ as:
\[ d = P_S t \quad \left(:= (I_p - (AS)^\dagger (AS)) t\right), \]
for some $t$. Then, the constrains will not be violated. Now, let's plug this into the objective function of the subproblem, to obtain an unconstrained problem in $t$:
\[  t^k = \arg \min_{t \in \R^p} \ve{\nabla f(x^k)}{S ((I_p - (AS)^\dagger (AS)) t )} +\frac12 \| S(I_p - (AS)^\dagger (AS)) t \|_M^2. \]
Then, from the first order optimality conditions we obtain that:
\[  P_S^T S^T M S P_S t^k = - P_S^T S^T \nabla f(x^k), \]
and hence we can define $t^k$ as
\[  t^k = -(P_S^T S^T M S P_S)^\dagger P_S^T S^T \nabla f(x^k). \]
In conclusion we obtain the following update rule for our \RSD algorithm:
\begin{equation}
\label{eq:rcdupdate}
x^{k+1} = x^k - \underbrace{S P_S (P_S^T S^T M S P_S)^\dagger P_S^T S^T}_{=Z_S} \nabla f(x^k) = x^k - Z_S \nabla f(x^k).
\end{equation}
After $k$ iterations of the \RSD algorithm, we generate a random output $(x^k, f(x^k))$, which depends on the observed implementation of the
random variable:
\[ {\cal F}_k =  (S_0, \cdots,  S_{k-1}). \]
Let us define the expected value of the objective function w.r.t. ${\cal F}_k$:
\[  \phi_k = \E \left[ f(x^k) \right]. \]
Next, we compute the decrease of the objective function after one random step:
\begin{align*}
f(x^{k+1}) & = f(x^k + S P_S  t^k) = f(x^k - Z_S  \nabla f(x^k))\\
& \overset{\eqref{eq:M}}{\leq} f(x^k) - \ve{\nabla f(x^k)}{Z_S \nabla f(x^k)} + \frac12 \|Z_S \nabla f(x^k) \|_M^2\\
& = f(x^k) - \ve{\nabla f(x^k)}{Z_S \nabla f(x^k)} + \frac12 \nabla f(x^k)^T Z_S M Z_S \nabla f(x^k)\\
& = f(x^k) - \ve{\nabla f(x^k)}{Z_S \nabla f(x^k)} + \frac12 \nabla f(x^k)^T Z_S \nabla f(x^k)\\
& = f(x^k) - \frac12 \ve{\nabla f(x^k)}{Z_S \nabla f(x^k)}.
\tagthis
\label{eq:rcddecrease}
\end{align*}
Then, we obtain the following strict decrease for the objective function in the conditional expectation:
\begin{align}
\label{expdecreaseM}
\E[f(x^{k+1}) | {\cal F}_k] & \leq  f(x^k) - \frac12 \| \nabla f(x^k)\|^2_Z,
\end{align}
provided that $x^k$ is not optimal. This holds since we assume that  $Z \succ 0$ on  $\R^n \setminus \range(A^T)$ and since any feasible $x$ satisfying $\nabla f(x) \in \range(A^T)$ is optimal for the original problem. Therefore, \RSD  algorithm belongs to the class of descent methods.  


\subsection{Computation cost per-iteration for \RSD\!\!}
\label{sec:costrsd}
It is easy to observe that if the cost of updating the gradient $\nabla f$ is negligible, then the cost per iteration in \RSD  is given by the computational effort of finding the solution of the subproblem. The sketch sampling $\sS$ can be completely dense (e.g. Gaussian random matrix) or can be extremely sparse (e.g. a few columns of the identity matrix).

\textit{Case 1: dense sketch matrix $S$}.  In this case, since we assume $p \ll n$ (in fact we usually choose $p$ of order ${\mathcal{O}(m)}$ or even smaller), then the computational cost per-iteration in the update \eqref{eq:rcdupdate} is linear in $n$ (more precisely of order ${\mathcal{O}(p m n)}$) plus the cost of computing the matrix $S^T M S \in \R^{p \times p}$. Clearly, if  $M$ is also a dense matrix, then the cost of computing the matrix $S^T M S$ is quadratic in $n$.  However, it can be reduced substantially,  that is the cost of computing this matrix depends linearly on $n$,  when e.g.  we have available a decomposition of the matrix $M$ as $M = \bar{M}^T \bar{M}$, with $\bar{M} \in \R^{\bar{p} \times n}$ and $\bar{p} \ll n$, or $M$ is sparse.

\textit{Case 2: sparse sketch matrix $S$}.  For simplicity, we can  assume that $S$ is chosen as few columns of the identity matrix and thus obtaining a coordinate descent type  method. In this case, the cost per-iteration of \RSD is independent of the problem size $n$. For example, the cost of computing $(AS)^\dagger$ is ${\mathcal{O}(m^2p)}$, while the cost of computing $(P_S^T S^T M S P_S)^\dagger$ is ${\mathcal{O}(p^3)}$.

In conclusion, in all situations the  iteration \eqref{eq:rcdupdate} of \RSD is much computationally cheaper (at least one order of magnitude) than  the iteration \eqref{eq:gupdate} corresponding to the full gradient. Based on the decrease of the objective function \eqref{expdecreaseM} we can derive different convergence rates for our algorithm \RSD  depending on the assumptions imposed on the objective function $f$.


\subsection{Convergence rate: smooth case}
\label{sec_rcd_lip}
We derive in this section the convergence rate of the sequence generated by the \RSD algorithm when the objective function is only smooth (Assumption \ref{ass_lip}). Recall that in the non-convex settings a feasible $x^*$ is a stationary point for optimization problem \eqref{problem} if $\nabla f(x^*) \in \range(A^T)$. On the other hand, for any feasible  $x$ we have the unique decomposition of $\nabla f(x) \in \R^n$:
\[  \nabla f(x)  = A^T \lambda + \nabla f(x)_{\perp}, \quad \text{where} \quad \lambda \in \R^m, \; \nabla f(x)_{\perp} \in \ker(A).  \]  It is clear that if a feasible $x$ satisfies  $\nabla f(x)_{\perp} = 0$, then such an $x$ is a stationary point for \eqref{problem}. In conclusion, a good measure of optimality  for a feasible $x$ is described in terms of $\|\nabla f(x)_{\perp}\|$. The theorem below provides a convergence rate for the sequence generated by \RSD in terms of this optimality measure:
\begin{theorem}
\label{th0}
Let $f$ be bounded from below, i.e. there exists $\bar{f} > - \infty $ such that we have  $\min_{x \in x^0 + \ker(A)} f(x) \geq \bar{f}$ and  Assumptions \ref{ass_lip} and \ref{ass_Z} hold. Then, the iterates of \RSD have the following sublinear convergence rate in expectation:
\begin{equation}
\label{estimate0}
\min_{0 \leq l \leq k-1}  \E[\| \nabla f(x^l)_{\perp}\|^2_Z]  \leq \frac{2 (f(x^0) - \bar{f})}{k}.
\end{equation}
\end{theorem}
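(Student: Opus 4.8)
The plan is to combine the one-step expected decrease already established in \eqref{expdecreaseM} with a telescoping argument, after first reconciling the two quantities that appear in the analysis: the decrease is stated in terms of $\|\nabla f(x^k)\|_Z$, whereas the optimality measure in the theorem is $\|\nabla f(x^k)_{\perp}\|_Z$. The crux is to show these two seminorms coincide. Writing the unique decomposition $\nabla f(x^k) = A^T\lambda + \nabla f(x^k)_{\perp}$ with $\nabla f(x^k)_{\perp}\in\ker(A)$, and recalling from Lemma \ref{lema1} that $\range(A^T)\subseteq\ker(Z_S)$ for every $S\sim\sS$ (hence $Z A^T\lambda=0$, and by symmetry of $Z$ also $(A^T\lambda)^T Z=0$), the cross term and the pure-range term in $\nabla f(x^k)^T Z\,\nabla f(x^k)$ both vanish, leaving $\|\nabla f(x^k)\|_Z^2=\|\nabla f(x^k)_{\perp}\|_Z^2$. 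This identity is what lets the descent guarantee be read directly as a statement about the stationarity measure in \eqref{estimate0}.

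Next I would take total expectation in \eqref{expdecreaseM} via the tower property, $\phi_{k+1}=\E\!\left[\E[f(x^{k+1})\mid\mathcal{F}_k]\right]$, obtaining $\phi_{k+1}\leq\phi_k-\tfrac12\,\E[\|\nabla f(x^k)_{\perp}\|_Z^2]$ once the seminorm identity is inserted. Summing this inequality over $l=0,\dots,k-1$ telescopes on the right-hand side to give $\tfrac12\sum_{l=0}^{k-1}\E[\|\nabla f(x^l)_{\perp}\|_Z^2]\leq\phi_0-\phi_k$. Here I would use that $x^0$ is deterministic, so $\phi_0=f(x^0)$, and that every iterate remains in $x^0+\ker(A)$: indeed each update adds $-Z_S\nabla f(x^k)\in\range(Z_S)\subseteq\ker(A)$, the inclusion $\range(Z_S)\subseteq\ker(A)$ being exactly the one derived after Lemma \ref{lema1}. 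Combined with the hypothesis $f\geq\bar f$ on $x^0+\ker(A)$, this yields $\phi_k\geq\bar f$, hence $\phi_0-\phi_k\leq f(x^0)-\bar f$.

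Finally, I would bound the minimum by the average, $\min_{0\leq l\leq k-1}\E[\|\nabla f(x^l)_{\perp}\|_Z^2]\leq\frac1k\sum_{l=0}^{k-1}\E[\|\nabla f(x^l)_{\perp}\|_Z^2]$, and substitute the telescoped estimate to conclude $\min_{0\leq l\leq k-1}\E[\|\nabla f(x^l)_{\perp}\|_Z^2]\leq\frac{2(f(x^0)-\bar f)}{k}$, which is precisely \eqref{estimate0}.

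The argument is essentially routine once the seminorm identity is in hand; the only genuinely substantive step is that first reduction $\|\nabla f(x^k)\|_Z=\|\nabla f(x^k)_{\perp}\|_Z$, which is what bridges the descent inequality \eqref{expdecreaseM} and the chosen optimality measure, and which relies critically on the property $\range(A^T)\subseteq\ker(Z)$ from Lemma \ref{lema1}. Everything else—the tower property, the telescoping sum, the feasibility of the iterates, and the min-versus-average bound—is standard bookkeeping, so I do not anticipate any serious obstacle beyond making the two-term cancellation in the first step precise.
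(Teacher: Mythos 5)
Your proposal is correct and follows essentially the same route as the paper's own proof: the one-step expected decrease \eqref{expdecreaseM}, the reduction $\|\nabla f(x^l)\|_Z^2 = \|\nabla f(x^l)_{\perp}\|_Z^2$ via $\range(A^T)\subseteq\ker(Z)$, telescoping against the lower bound $\bar f$, and the min-versus-average step. The only differences are cosmetic ordering and that you explicitly verify the iterates stay in $x^0+\ker(A)$ via $\range(Z_S)\subseteq\ker(A)$, a point the paper leaves implicit.
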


\begin{proof}
Taking expectation over the entire history ${\cal F}_k$ in \eqref{expdecreaseM} we get:
\begin{align}
\label{eq:descent_smooth}
\phi_{k+1}  \leq  \phi_k - \frac12 \E[\| \nabla f(x^k)\|^2_Z].
\end{align}
Summing the previous relation and using that $f$ is bounded from below we further get:
\begin{align*}
\sum_{l=0}^{k-1} \E[\| \nabla f(x^l)\|^2_Z] \leq 2(\phi_0 - \phi_k) \leq 2(\phi_0 - \bar{f}).
\end{align*}
Using the unique decomposition $\nabla f(x^l)  = A^T \lambda^l + \nabla f(x^l)_{\perp}$ for all $l$ and since $\ker(Z) = \range(A^T)$, then we obtain $\| \nabla f(x^l)\|^2_Z = \| \nabla f(x^l)_{\perp}\|^2_Z$. Therefore, taking the limit as $k \to \infty$ we obtain the asymptotic convergence $\lim_{k \to \infty} \E[\| \nabla f(x^k)_{\perp}\|^2_Z] =0$. 
Moreover, since $Z \succ 0 $ on $\ker(A)$ and $\nabla f(x^l)_{\perp} \in \ker(A)$ we also get:
\[  \min_{0 \leq l \leq k-1}  \E[\| \nabla f(x^l)_{\perp}\|^2_Z]  \leq \frac{2 (f(x^0) - \bar{f})}{k}, \]
which concludes our statement.
\end{proof}


\subsection{Convergence rate: smooth convex case}
\label{sec_rcd_clip}
\noindent In order to estimate the rate of convergence of our
algorithm when the objective function $f$ is smooth and convex, we introduce
the following distance that takes into account that our algorithm
is a descent method:
\begin{equation}
{\cal R}(x^0) = \max_{\{x \in x^0+\ker(A): f(x) \leq f(x^0)\}} \; \min_{x^* \in X^*} \| x - x^*\|_{Z}^*,
\label{eq:R2}
\end{equation}
which measures the size of the sublevel set of $f$ given by $x^0$. We
assume that this distance is finite for the initial iterate $x^0$. In the next theorem we prove sublinear convergence in expected value of the objective function  for the smooth convex case:
\begin{theorem}
\label{th1}
Let the objective function $f$ be convex and  Assumptions \ref{ass_lip} and \ref{ass_Z} hold. Then, the iterates generated by \RSD have the following sublinear convergence rate in the expected value of the objective function:
\begin{equation}
\label{estimate}
\phi_k - f^* \le \frac{2 {\cal R}^2(x^0)}{k + 2 {\cal R}^2(x^0)/(f(x^0) - f^*)}.
\end{equation}
\end{theorem}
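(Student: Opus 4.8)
The plan is to turn the per-iteration descent guarantee \eqref{expdecreaseM} into a self-improving recursion for the suboptimality gap $\Delta_k := \phi_k - f^*$ and then invert it. First I would record that \RSD is a genuine descent method at the level of each realization: since $Z_S \succeq 0$, the one-step bound \eqref{eq:rcddecrease} gives $f(x^{k+1}) \le f(x^k) \le \cdots \le f(x^0)$ almost surely, so every iterate $x^k$ stays inside the sublevel set $\{x \in x^0+\ker(A): f(x) \le f(x^0)\}$ used to define ${\cal R}(x^0)$ in \eqref{eq:R2}. Choosing $x^* \in X^*$ to attain the inner minimum in that definition for the current $x^k$, this immediately yields $\|x^k - x^*\|_Z^* \le {\cal R}(x^0)$.

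The second step lower-bounds the gradient term $\|\nabla f(x^k)\|_Z$ by the gap. Convexity of $f$ gives $f(x^k) - f^* \le \ve{\nabla f(x^k)}{x^k - x^*}$, and because both iterate and optimum are feasible we have $x^k - x^* \in \ker(A)$. The crucial point is that the Cauchy--Schwarz inequality \eqref{eq:CS} applies with the arbitrary vector $u = \nabla f(x^k) \in \R^n$ on one side and $x^k - x^* \in \ker(A)$ on the other, so that $f(x^k) - f^* \le \|\nabla f(x^k)\|_Z \cdot \|x^k - x^*\|_Z^* \le {\cal R}(x^0)\, \|\nabla f(x^k)\|_Z$. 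Squaring and rearranging gives the gradient lower bound $\|\nabla f(x^k)\|_Z^2 \ge (f(x^k)-f^*)^2/{\cal R}^2(x^0)$.

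Substituting this into \eqref{expdecreaseM} and taking full expectation over ${\cal F}_k$ produces $\Delta_{k+1} \le \Delta_k - \frac{1}{2{\cal R}^2(x^0)}\E[(f(x^k)-f^*)^2]$. Here I would apply Jensen's inequality to the convex map $t \mapsto t^2$ to replace $\E[(f(x^k)-f^*)^2]$ by $(\E[f(x^k)-f^*])^2 = \Delta_k^2$, giving the scalar recursion $\Delta_{k+1} \le \Delta_k - \Delta_k^2/(2{\cal R}^2(x^0))$. Since $\Delta_{k+1}\le\Delta_k$, dividing through by $\Delta_k\Delta_{k+1}$ yields $1/\Delta_{k+1} \ge 1/\Delta_k + 1/(2{\cal R}^2(x^0))$; telescoping from $\Delta_0 = f(x^0)-f^*$ and taking reciprocals then produces precisely \eqref{estimate}.

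I expect the main obstacle to be the careful bookkeeping between the stochastic and deterministic ingredients: the descent bound \eqref{expdecreaseM} is only a conditional expectation, whereas the convexity estimate is pathwise and uses an optimal point $x^*$ that depends on the realized $x^k$. What reconciles them is, on the one hand, the almost-sure monotonicity that keeps every sampled trajectory in the sublevel set regardless of which sketches are drawn, and, on the other hand, Jensen's inequality, which converts the expected squared gap into the square of the expected gap and is exactly what closes the recursion in the single scalar sequence $\Delta_k$.
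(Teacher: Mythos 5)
Your proposal is correct and follows essentially the same route as the paper's own proof: pathwise descent via \eqref{eq:rcddecrease} to keep iterates in the sublevel set defining ${\cal R}(x^0)$, convexity plus the Cauchy--Schwarz inequality \eqref{eq:CS} to obtain $f(x^k)-f^* \le {\cal R}(x^0)\|\nabla f(x^k)\|_Z$, substitution into \eqref{expdecreaseM}, and the standard division-by-$\Delta_k\Delta_{k+1}$ telescoping of the recursion $\Delta_{k+1}\le \Delta_k - \Delta_k^2/(2{\cal R}^2(x^0))$. The only difference is cosmetic: you make explicit the Jensen step $\E[(f(x^k)-f^*)^2]\ge \Delta_k^2$ needed when passing from conditional to full expectation, which the paper leaves implicit.
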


\begin{proof}
Recall that all our iterates are feasible, i.e. $x^k \in x^0 + \ker(A)$.  From
convexity of $f$ and the definition of the norm $\|\cdot\|_{Z}$ on the subspace $\ker(A)$,  we get:
\begin{align*}
f(x^l) - f^* & \leq   \langle \nabla f(x^l), x^l - x^* \rangle  \overset{\eqref{eq:CS}}{\leq}  \|x^l - x^*\|_{Z}^*   \|\nabla f(x^l)\|_{Z} \quad  \forall  x^* \in X^*,  \; l \geq 0.
\end{align*}
Since the previous chain of inequalities hold for any optimal point $x^* \in X^*$, we get further:
\begin{align*}
f(x^l) - f^* & \leq \min_{x^* \in X^*}  \|x^l - x^*\|_{Z}^*   \|\nabla f(x^l)\|_{Z} \overset{\eqref{eq:rcddecrease}+\eqref{eq:R2}}{\leq} {\cal R}(x^0) \cdot  \|\nabla f(x^l)\|_{Z} \quad \forall l \geq 0.
\end{align*}
Combining this inequality with \eqref{expdecreaseM}, we obtain:
\[ f(x^l) - \E \left[ f(x^{l+1}) \;|\; {\cal F}_l \right]  \geq  \frac{(f(x^l) - f^*)^2}{2
{\cal R}^2(x^0)}, \]
or equivalently
\begin{align*}
\E \left[ f(x^{l+1}) \;|\; {\cal F}_l \right]  - f^* \le f(x^l)- f^* - \frac{(f(x^l) - f^*)^2}{2 {\cal R}^2(x^0)}.
\end{align*}
Taking the expectation of both sides of this inequality  in
${\cal F}_{l}$ and denoting $\Delta_l = \phi_l - f^*$ leads to:
\begin{equation*}
\Delta_{l+1} \leq \Delta_l - \frac{\Delta_l^2}{2 {\cal R}^2(x^0)}.
\end{equation*}
Dividing both sides of this inequality with $\Delta_{l}
\Delta_{l+1}$ and taking into account that $\Delta_{l+1} \le
\Delta_l$ (see  \eqref{expdecreaseM}), we obtain:
\begin{equation*}
\frac{1}{\Delta_{l}} \le \frac{1}{\Delta_{l+1}} - \frac{1}{2 {\cal R}^2(x^0)} \quad \forall l \geq 0.
\end{equation*}
Adding these inequalities from $l=0, \cdots, k-1$ we get  the
following inequalities $0 \leq \frac{1}{\Delta_{0}} \le
\frac{1}{\Delta_{k}} - \frac{k}{2{\cal R}^2(x^0)}$, from which we
obtain the desired statement.
\end{proof}

\subsection{Convergence rate: smooth strongly convex case}
\label{sec_rcd_sc}
\noindent In addition to the smoothness assumption, we
now assume that the function $f$ is strongly convex with
respect to the extended norm $\|\cdot\|_{Z}^*$ with strong convexity
parameter $\sigma_{Z} >0$ on the subspace $x^0+\ker(A)$:

\begin{assumption}
\label{ass_scZ}
We assume that the objective function $f$ is strongly  convex on the subspace $x^0+\ker(A)$, that is there exists a parameter $\sigma_{Z} >0$ satisfying the following inequality:
\begin{equation}
\label{strongq}
f(x) \ge f(y) + \langle \nabla f(y), x -y \rangle +
\frac{\sigma_{Z}}{2} \left( \| x-y \|_{Z}^* \right)^2 \quad \forall x, y \in x^0+\ker(A).
\end{equation}
\end{assumption}
Note that if $f$ is strongly convex function everywhere in $\R^n$, that is  there exists a positive definite  matrix $G$ such that:
\begin{equation}
\label{eq:G}
f(x) \geq f(y)+\ve{\nabla f(y)}{x-y} +\frac12 (y-x)^T G (y-x) \quad \forall x,y \in \R^n,
\end{equation}
then using the definition of the dual norm $(\|x\|_Z^*)^2 = x^T Z^\dagger x$ (see Lemma \ref{lem4}) we  have that \eqref{strongq} also holds for some $\sigma_Z$ satisfying:
\[ G \succeq \sigma_Z Z^\dagger \quad \text{on} \quad \ker(A) \;\; (\text{or equivalently on} \; \R^n \setminus \range(A^T)).  \]
Since $x^T Z x =0$ for all $x \in \range(A^T)$ (see Lemma \ref{lema1}), then also $x^T Z^\dagger x =0$ for all $x \in \range(A^T)$. In conclusion, the matrix inequality $G \succeq \sigma_Z Z^\dagger$ holds automatically on $\range(A^T)$ for any constant $\sigma_Z$, and consequently we can  define $\sigma_Z$ as the largest positive constant satisfying everywhere on $\R^n$ the matrix inequality:
\[ G \succeq \sigma_Z Z^\dagger.  \]
This shows that  Assumption \ref{ass_scZ} is less restrictive than requiring strong convexity for $f$ everywhere in $\R^n$ as in \eqref{eq:G}. Next, we prove that the strong convexity parameter  $\sigma_Z$ is bounded from above:

\begin{lemma}
Under Assumptions \ref{ass_lip}, \ref{ass_Z} and \ref{ass_scZ}  the strong convexity parameter $\sigma_Z$ defined in \eqref{strongq} is bounded above by:
\begin{equation}
\sigma_Z \leq  \lambda_{\max} (M^{1/2}ZM^{1/2}) \leq 1.
\label{asdfasdfasdfa}
\end{equation}	
\end{lemma}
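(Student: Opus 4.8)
The claim bundles two inequalities, and I would establish them by different means: the lower bound $\sigma_Z \le \lambda_{\max}(M^{1/2}ZM^{1/2})$ by confronting the smoothness bound \eqref{eq:M} with the strong-convexity bound \eqref{strongq}, and the upper bound $\lambda_{\max}(M^{1/2}ZM^{1/2}) \le 1$ from the projection identity $Z_S M Z_S = Z_S$ of Lemma \ref{lema1}.

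For the first inequality, I would begin by combining the two assumptions. Writing \eqref{strongq} with the roles of $x$ and $y$ interchanged and chaining it against \eqref{eq:M} applied to the same pair, the common terms $f(x) + \ve{\nabla f(x)}{y-x}$ cancel and leave
\[ \frac{\sigma_Z}{2}\left(\|y-x\|_{Z}^*\right)^2 \le \frac12 (y-x)^T M (y-x), \quad \forall x,y \in x^0 + \ker(A). \]
Since $y-x$ sweeps out all of $\ker(A)$ and $(\|u\|_{Z}^*)^2 = u^T Z^\dagger u$ by Lemma \ref{lem4}, this is the vector inequality $\sigma_Z\, u^T Z^\dagger u \le u^T M u$ for every $u \in \ker(A)$. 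The key move is then the substitution $u = Z M^{1/2} v$ for arbitrary $v \in \R^n$; this $u$ lies in $\range(Z) = \ker(A)$, using $\range(A^T) = \ker(Z)$ from Lemma \ref{lem3}. With the pseudo-inverse identity $Z Z^\dagger Z = Z$ and $M = M^{1/2}M^{1/2}$ one computes $u^T Z^\dagger u = v^T W v$ and $u^T M u = v^T W^2 v$, where $W := M^{1/2} Z M^{1/2}$, so the inequality collapses to $\sigma_Z W \preceq W^2$. Diagonalizing the symmetric positive semidefinite matrix $W$, each positive eigenvalue $\mu$ must satisfy $\sigma_Z \mu \le \mu^2$, i.e. $\sigma_Z \le \mu$; since $W \neq 0$ it has such an eigenvalue, and in particular $\sigma_Z \le \lambda_{\max}(W)$.

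For the second inequality I would exploit Lemma \ref{lema1} directly. Set $W_S := M^{1/2} Z_S M^{1/2}$, which is symmetric and positive semidefinite. The identity $Z_S M Z_S = Z_S$ yields $W_S^2 = M^{1/2} Z_S M Z_S M^{1/2} = M^{1/2} Z_S M^{1/2} = W_S$, so $W_S$ is a symmetric idempotent, hence an orthogonal projector with eigenvalues in $\{0,1\}$, giving $W_S \preceq I_n$ almost surely. Taking expectations and using linearity together with the fact that $M^{1/2}$ is deterministic, $M^{1/2} Z M^{1/2} = \E_S[W_S] \preceq I_n$, whence $\lambda_{\max}(M^{1/2}ZM^{1/2}) \le 1$.

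The main obstacle I anticipate is in the first part: justifying that the substitution $u = Z M^{1/2} v$ produces a lossless reduction to $\sigma_Z W \preceq W^2$, together with careful bookkeeping of the pseudo-inverse identity $Z Z^\dagger Z = Z$, which is legitimate precisely because $\range(Z) = \ker(A)$ by Lemma \ref{lem3}. By contrast, the upper bound is essentially immediate once the idempotence $W_S^2 = W_S$ is noticed and the Löwner order is seen to survive taking expectations.
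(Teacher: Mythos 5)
Your proof is correct and follows essentially the same route as the paper's: both reduce the two assumptions to $\sigma_Z Z^\dagger \preceq M$ on $\ker(A)$, pass (via your substitution $u = ZM^{1/2}v$, which is the paper's pre- and post-multiplication by $ZM^{1/2}$ in disguise) to $\sigma_Z \Upsilon \preceq \Upsilon^2$ with $\Upsilon = M^{1/2}ZM^{1/2}$, and obtain $\lambda_{\max}(\Upsilon) \leq 1$ from the idempotence of $M^{1/2}Z_SM^{1/2}$ implied by $Z_S M Z_S = Z_S$. Your use of $W_S \preceq I_n$ together with monotonicity of expectation in the L\"owner order is interchangeable with the paper's appeal to convexity of $\lambda_{\max}$ (Jensen), and your explicit remark that $W \neq 0$, so a positive eigenvalue exists, even tidies a point the paper glosses over in the step from $\lambda^2 - \sigma_Z \lambda \geq 0$ to $\sigma_Z \leq \lambda$.
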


\begin{proof}
By the Lipschitz continuous gradient inequality (see Assumption \ref{ass_lip}) and the strong convexity inequality  (see Assumptions \ref{ass_scZ}) we have that $\sigma_Z Z^\dagger \preceq M$ on $\ker(A)$ (or equivalently on $\R^n \setminus \range(A^T)$). Since $x^T Z x =0$ for all $x \in \range(A^T)$ (see Lemma \ref{lema1}), then also $x^T Z^\dagger x =0$ for all $x \in \range(A^T)$. In conclusion, the matrix inequality $\sigma_Z Z^\dagger \preceq M$ holds automatically on $\range(A^T)$. Therefore, we get the following matrix inequality valid on $\R^n$:
\[  \sigma_Z Z^\dagger \preceq M.  \]
Pre- and post-multiplying the previous matrix inequality with $Z M^{1/2}$ leads to:
\begin{align*}
\sigma_Z  M^{1/2}  Z Z^\dagger Z M^{1/2} \preceq M^{1/2} Z M Z M^{1/2},
\end{align*}
or equivalently
\begin{align*}
\sigma_Z  M^{1/2}  (Z Z^\dagger Z) M^{1/2}   \preceq  M^{1/2} Z  (M^{1/2} M^{1/2}) Z M^{1/2}.
\end{align*}
Using the basic properties of the pseudo-inverse we obtain:
\[ \sigma_Z \; M^{1/2} Z M^{1/2} \preceq (M^{1/2}ZM^{1/2}) (M^{1/2} Z M^{1/2}). \]
Therefore, if we denote by $\Upsilon = M^{1/2}ZM^{1/2} \succeq 0$, then we get that $\Upsilon^2 - \sigma_Z \Upsilon \succeq 0$ and thus for any eigenvalue $\lambda$ of $\Upsilon$ it holds that $\lambda^2 -\sigma_Z \lambda \geq 0$ or equivalently $\sigma_Z \leq \lambda$. It remains to show that $\lambda_{\max} (\Upsilon) \leq 1$.  For this, we recall that according to Lemma \ref{lema1} we know that $Z_S = Z_S M Z_S $. By utilizing the fact that $M$ is symmetric and positive-definite, we can notice that
\begin{align*}
M^{1/2} Z_S M^{1/2} &= M^{1/2} Z_S M Z_S M^{1/2}  = (M^{1/2} Z_S M^{1/2}) (M^{1/2} Z_S M^{1/2}).
\end{align*}
Therefore, all the eigenvalues of $M^{1/2} Z_S M^{1/2}$ belongs to the set $\{0, 1\}$. Further, by the  definition of $Z$ in \eqref{eq:def_of_Z} and using the convexity of the function $\lambda_{\max}$ on the set of positive semidefinite matrices,  we have:
\begin{align*}
\lambda_{\max} (M^{1/2} Z M^{1/2}) & = \lambda_{\max} (\E_S [M^{1/2} Z_S M^{1/2}]) \leq \E_S[ \lambda_{\max} (M^{1/2} Z_S M^{1/2})]   \leq 1,
\end{align*}	
which completes our proof.
\end{proof}	

\noindent  We now derive a linear convergence estimate for our
algorithm \RSD under this additional strong convexity assumption on the subspace $x^0+\ker(A)$:
\begin{theorem}
\label{th3sc}
Under Assumptions \ref{ass_lip}, \ref{ass_Z} and \ref{ass_scZ} the sequence generated by \RSD satisfies the following linear convergence rate for the expected value of the objective function:
\begin{equation}
\label{strongconv}
\phi_k - f^* \le (1 - \sigma_{Z})^k \left(f(x^0) - f^* \right).
\end{equation}
\end{theorem}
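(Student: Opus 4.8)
The plan is to combine the one-step descent inequality \eqref{expdecreaseM} with a gradient-domination (Polyak--{\L}ojasiewicz type) inequality that I extract from the strong convexity Assumption \ref{ass_scZ}. The end goal is the one-step contraction $\phi_{k+1} - f^* \le (1-\sigma_{Z})(\phi_k - f^*)$; once this is in hand, iterating it from $k$ down to $0$ and using that $x^0$ is deterministic (so $\phi_0 = f(x^0)$) yields \eqref{strongconv} immediately. The preceding lemma guarantees $\sigma_{Z}\le 1$ (see \eqref{asdfasdfasdfa}), so the contraction factor $1-\sigma_{Z}$ is nonnegative and the bound is meaningful.

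The central step is to prove the gradient-domination bound
\[ \|\nabla f(x^k)\|_{Z}^2 \ge 2\sigma_{Z}\,(f(x^k) - f^*). \]
To this end I fix $y = x^k$ in \eqref{strongq} and minimize both sides over $x \in x^0+\ker(A)$. The feasible set of \eqref{problem} is exactly $x^0+\ker(A)$, so the minimum of the left-hand side is $f^*$. Writing the displacement $u = x - x^k \in \ker(A)$ and using Lemma \ref{lem4} to express $(\|u\|_{Z}^*)^2 = u^T Z^\dagger u$, the right-hand side becomes the strongly convex quadratic $f(x^k) + \ve{\nabla f(x^k)}{u} + \tfrac{\sigma_{Z}}{2}\, u^T Z^\dagger u$ in $u \in \ker(A)$.

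The main obstacle is carrying out this constrained minimization correctly through the pseudo-inverse. I would decompose $\nabla f(x^k) = A^T\lambda + \nabla f(x^k)_{\perp}$ with $\nabla f(x^k)_{\perp}\in\ker(A)$, so that $\ve{\nabla f(x^k)}{u} = \ve{\nabla f(x^k)_{\perp}}{u}$ for $u \in \ker(A)$. Exploiting that $Z^\dagger Z$ is the orthogonal projector onto $\ker(A)$ (Lemma \ref{lem3}) and that $Z,Z^\dagger$ are mutually inverse on that subspace, the first-order condition gives the minimizer $u^\star = -\tfrac{1}{\sigma_{Z}} Z\,\nabla f(x^k)_{\perp}$; substituting it back and applying the pseudo-inverse identity $Z Z^\dagger Z = Z$ yields the minimal value $f(x^k) - \tfrac{1}{2\sigma_{Z}}\|\nabla f(x^k)_{\perp}\|_{Z}^2$. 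Comparing with $f^*$ gives the domination bound, and since $\ker(Z)=\range(A^T)$ one has $\|\nabla f(x^k)\|_{Z}^2 = \|\nabla f(x^k)_{\perp}\|_{Z}^2$ exactly as in the proof of Theorem \ref{th0}.

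It then remains to feed this into the descent inequality. From \eqref{expdecreaseM} and the domination bound,
\[ \E[f(x^{k+1})\mid {\cal F}_k] - f^* \le \big(f(x^k)-f^*\big) - \tfrac12\|\nabla f(x^k)\|_{Z}^2 \le (1-\sigma_{Z})\big(f(x^k)-f^*\big). \]
Taking the full expectation over ${\cal F}_k$ and using the tower property gives $\phi_{k+1}-f^* \le (1-\sigma_{Z})(\phi_k - f^*)$, and unrolling this recursion establishes \eqref{strongconv}.
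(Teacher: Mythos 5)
Your proposal is correct and follows essentially the same route as the paper's proof: both derive the Polyak--{\L}ojasiewicz bound $\|\nabla f(x^k)\|_{Z}^2 \ge 2\sigma_{Z}(f(x^k)-f^*)$ by minimizing both sides of \eqref{strongq} over $x^0+\ker(A)$ (you simply make the minimizer explicit via the projector $Z^\dagger Z$ and the identity $ZZ^\dagger Z = Z$, where the paper states the optimality conditions with a multiplier and uses $Z^\dagger Z Z^\dagger = Z^\dagger$), and then both combine this with the descent inequality \eqref{expdecreaseM} and take expectations to obtain the contraction \eqref{strongconv}. Your additional observation that $\|\nabla f(x^k)\|_{Z}^2 = \|\nabla f(x^k)_{\perp}\|_{Z}^2$ is a harmless elaboration consistent with Lemma \ref{lem3} and the proof of Theorem \ref{th0}.
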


\begin{proof}
Given $x^k$, taking the conditional expectation in \eqref{expdecreaseM} over the random matrix $S$  leads to the following inequality:
\begin{equation}
\label{eq:ineqrcdsc}
2\left( f(x^k) -  E \left[ f(x^{k+1}) \;|\; x^k \right] \right) \ge
\|\nabla f(x^k)\|_{Z}^{2}.
\end{equation}
On the other hand,  consider the minimization of the right
hand side in \eqref{strongq} over $x \in x^0+\ker(A)$, and denote $x(y)$ its optimal solution. Using the definition of the dual norm  $\| \cdot \|_{Z}^*$ in the subspace $\ker(A)$, one can see that $x(y)$ satisfies the following optimality conditions:
\[ \exists \mu  \;\; \text{s.t.}: \;\;\; \nabla f(y) + \sigma_{Z} Z^\dagger (x(y) - y) + A^T\mu=0 \quad \text{and} \quad x(y) \in x^0+\ker(A). \]
Combining these optimality conditions with  the well-known property of the pseudo-inverse, that is $Z^\dagger Z Z^\dagger = Z^\dagger$, we get that the optimal value of this minimization problem has the  following  expression:
\[ f(y) - \frac{1}{2 \sigma_{Z}} \|\nabla f(y)\|_{Z}^{2}. \]
Therefore, minimizing both sides of inequality
\eqref{strongq} over $x \in x^0+\ker(A)$, we have:
\begin{equation}
\label{strongq_less}
\|\nabla f(y)\|_{Z}^{2} \ge 2 \sigma_{Z} (f(y) - f^*)
\quad \forall y \in x^0+ \ker(A)
\end{equation}
and for $y = x^k$ we get:
\begin{equation*}
\|\nabla f(x^k)\|_{Z}^{2} \ge 2 \sigma_{Z} \left( f(x^k) -
f^* \right).
\end{equation*}
Combining the  inequality \eqref{eq:ineqrcdsc} with the previous one, and taking
expectation in ${\cal F}_{k-1}$ on both sides, we arrive at the statement of
the theorem.
\end{proof}

\begin{remark}
From the proof of Theorem  \ref{th3sc} it follows that we can further relax the strong convexity assumption, that is instead of  \eqref{strongq} it is sufficient to require \eqref{strongq_less} to hold on $x^0+ \ker(A)$. The reader should note that an inequality of the form  \eqref{strongq_less} is known in the optimization literature  as the Polyak-Lojasiewicz (PL) condition (see e.g. \cite{KarNut:16} for a recent exposition), and the proof above shows that algorithm \RSD converges linearly for smooth convex functions satisfying only the PL condition. Since functions satisfying the PL inequality need not be convex, linear convergence of \RSD method to the global optimum extends beyond the realm of convex functions. More precisely, is is easy to see that  the convergence result of Theorem \ref{th0} can be strengthen, that is we can prove linear convergence  in the expected values of the objective function  for the iterates of algorithm \RSD provided that additionally the PL type condition \eqref{strongq_less} holds (we just need to combine the inequalities \eqref{eq:descent_smooth}  and \eqref{strongq_less}).
\end{remark}

\noindent Note that in special cases, where complexity bounds are known for \RSD, such as optimization problems with a single linear coupled constraint, our theory recovers the best known bounds (see e.g. the convergence analysis in \cite{FroRei:15,NecNes:11}). For example, in the smooth convex case choosing for the sketch matrix $S$ at least $p \geq 2$ columns of the identity matrix, then combining \eqref{eq:Zij} with Theorem \ref{th1} we recover the convergence rate of coordinate descent algorithm from \cite[Theorem 4.1]{NecNes:11} for the problem with a separable objective function and a single linear constraint. Similarly, for the strongly convex case our convergence analysis recovers \cite[Theorem 4.2]{NecNes:11}. In conclusion, to our knowledge, this is the first complete convergence analysis  of a general random sketch descent  algorithm, for which coordinate descent method is a particular case,  for solving optimization problems with multiple linear coupled  constraints.


\section{Accelerated random sketch descent algorithm}
For the accelerated variant of Algorithm \RSD let us first  define the following constant:
\begin{equation}
\label{eq:arcd_nu}
\nu_{\max}  = \max_{u \in \ker(A), u \not=0} \frac{\Exp [ (\| Z_S u\|^*_{Z})^2 ]}{\|u\|_Z^2} = \max_{u \in \ker(A), u \not=0} \frac{\Exp [\| Z_S u\|^2_{Z^\dagger}]}{\|u\|_Z^2}.
\end{equation}
Let us now consider any constant parameter $\nu \geq \nu_{\max}$. The Accelerated Random Sketch Descent (A-\RSD\!) scheme is depicted  in Algorithm~\ref{alg:arcd}:
\begin{algorithm}
\caption{Algorithm A-\RSD}
\label{alg:arcd}
\begin{algorithmic}[1]
\STATE {\bf Input:} Positive sequences $\{\alpha_k\}_{k=0}^\infty,  \{\beta_k\}_{k=0}^\infty,  \{\gamma_k\}_{k=0}^\infty$
\STATE Choose  $x^0 \in \R^n$ such that $ A x^0 = b$ and set $v^0 = x^0$
\FOR {  $k \geq 0$}
\STATE
sample $S \sim {\sS}$  and perform the following updates:
\STATE  $y^k = \alpha_k v^k + (1-\alpha_k) x^k $
\STATE $x^{k+1} = y^k -  Z_S \nabla f(y^k)  $
\STATE $v^{k+1} = \beta_k v^k + (1 - \beta_k) y^k - \gamma_k Z_S \nabla f(y^k)$
\ENDFOR
\end{algorithmic}
\end{algorithm}


\subsection{Computation cost per-iteration for A-\RSD\!\!} 
It is easy to observe that the computational  cost for updating the sequence $x^k$ is comparable to the one corresponding to \RSD algorithm. Therefore, the conclusions regarding the cost per-iteration from Section \ref{sec:costrsd} corresponding to \RSD are also valid here. Note that the accelerated variant also requires updating two additional sequences $y^k$ and $v^k$, which requires computations with full vectors in $\R^n$. However, for structured optimization problems we can avoid the addition of full vectors in $\R^n$ and still keep the cost per-iteration of A-\RSD comparable to that of \RSD\!.  More precisely, we can efficiently implement the updates of  A-\RSD algorithm without  full-dimensional vector operations when the sketch matrix  $S$ is sparse and when we can efficiently compute:
\[  \nabla f(\alpha v + \beta u)  \qquad \forall \alpha, \beta \in \R \;\; \text{and} \;\;  v,u \in \R^n. \]
Note that gradient evaluation in such points  is computationally easy  when  $f$ has a special structure, e.g.  of the form $f(x) = g(Ex)$, where $E$ is a sparse matrix \cite{FerRic:15}. Objective functions of this form includes many generalized linear models, such as  logistic regression, least squares, etc. In Appendix A  we provide efficient implementations  of the updates of A-\RSD for these settings.


\subsection{Basic properties of A-\RSD\!}
Before deriving convergence rates for A-\RSD we analyze some basic properties of this algorithm. First, we  prove  that the newly introduced constant  $\nu_{\max}$ is bounded, thus finite:
\begin{lemma}
Under Assumptions \ref{ass_lip}, \ref{ass_Z} and \ref{ass_scZ} we have:
\[ 0 < \sigma_Z \leq  \nu_{\max} \leq \lambda_{\max} (M^{-1/2} Z^\dagger M^{-1/2}) < \infty. \]
\end{lemma}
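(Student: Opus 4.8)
The plan is to read $\nu_{\max}$ as a generalized Rayleigh quotient and to sandwich the matrix $K := \E_S[Z_S Z^\dagger Z_S]$ between two multiples of $Z$ on $\ker(A)$. Since $\|Z_S u\|_{Z^\dagger}^2 = u^T Z_S Z^\dagger Z_S u$ (using $Z_S = Z_S^T$ from Lemma \ref{lema1}) and $\|u\|_Z^2 = u^T Z u$, definition \eqref{eq:arcd_nu} reads
\[ \nu_{\max} = \max_{u \in \ker(A),\, u \neq 0} \frac{u^T K u}{u^T Z u}, \qquad K = \E_S[Z_S Z^\dagger Z_S], \]
which is well-defined because $Z \succ 0$ on $\ker(A)$ by Assumption \ref{ass_Z}. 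The two facts I would lean on are the identity $Z_S M Z_S = Z_S$ (Lemma \ref{lema1}) and $\E_S[Z_S] = Z$ from \eqref{eq:def_of_Z}. The key structural observation is that $\Pi_S := M^{1/2} Z_S M^{1/2}$ is an orthogonal projection: it is symmetric and $\Pi_S^2 = M^{1/2} Z_S M Z_S M^{1/2} = M^{1/2} Z_S M^{1/2} = \Pi_S$ by that identity (exactly the computation already made in the proof of the lemma preceding \eqref{asdfasdfasdfa}). Equivalently, $Z_S = M^{-1/2} \Pi_S M^{-1/2}$.

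For the upper bound I would write, with $W := M^{-1/2} Z^\dagger M^{-1/2} \succeq 0$,
\[ Z_S Z^\dagger Z_S = M^{-1/2} \Pi_S W \Pi_S M^{-1/2} \preceq \lambda_{\max}(W)\, M^{-1/2} \Pi_S M^{-1/2} = \lambda_{\max}(W)\, Z_S, \]
where the inequality uses $\Pi_S W \Pi_S \preceq \lambda_{\max}(W)\, \Pi_S$, valid for any orthogonal projection $\Pi_S$ since $W \preceq \lambda_{\max}(W) I$ and conjugation by $\Pi_S$ preserves the Loewner order. Taking expectations gives $K \preceq \lambda_{\max}(W)\, Z$, hence $u^T K u \leq \lambda_{\max}(W)\, u^T Z u$ for every $u$, so $\nu_{\max} \leq \lambda_{\max}(M^{-1/2} Z^\dagger M^{-1/2})$. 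This bound is finite because $M \succ 0$ makes $M^{-1/2}$ well-defined and $Z^\dagger$ is a fixed finite matrix.

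For the lower bound I would use the ``variance is positive semidefinite'' trick: since $Z^\dagger \succeq 0$, the random matrix $(Z_S - Z) Z^\dagger (Z_S - Z)$ is positive semidefinite, hence so is its expectation. Expanding and using $\E_S[Z_S] = Z$ together with the pseudoinverse identity $Z Z^\dagger Z = Z$ yields
\[ 0 \preceq \E_S\!\left[(Z_S - Z) Z^\dagger (Z_S - Z)\right] = K - Z Z^\dagger Z = K - Z, \]
so $K \succeq Z$ and therefore $\nu_{\max} \geq 1$. Combining this with the bound $\sigma_Z \leq \lambda_{\max}(M^{1/2} Z M^{1/2}) \leq 1$ from \eqref{asdfasdfasdfa}, and with $\sigma_Z > 0$ from Assumption \ref{ass_scZ}, gives the full chain $0 < \sigma_Z \leq 1 \leq \nu_{\max} \leq \lambda_{\max}(M^{-1/2} Z^\dagger M^{-1/2}) < \infty$.

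I do not expect a serious obstacle: the argument is two matrix sandwich computations plus taking expectations of Loewner inequalities. The only point requiring care is recognizing and justifying the projection structure of $\Pi_S = M^{1/2} Z_S M^{1/2}$ (which rests entirely on $Z_S M Z_S = Z_S$) and then transporting the inequalities through conjugation by $M^{\pm 1/2}$ without sign or ordering errors; the rest is routine.
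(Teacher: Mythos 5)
Your proposal is correct and follows essentially the same route as the paper: the upper bound rests on the same two ingredients ($Z_S M Z_S = Z_S$ and $\Exp_S[Z_S]=Z$, with $Z^\dagger \preceq \lambda_{\max}(M^{-1/2}Z^\dagger M^{-1/2})\,M$ transported through conjugation), and your variance expansion $\Exp_S[(Z_S-Z)Z^\dagger(Z_S-Z)] \succeq 0$ is exactly the paper's Jensen step in Loewner form, both yielding $\nu_{\max} \geq 1 \geq \sigma_Z$ via \eqref{asdfasdfasdfa}. The only cosmetic difference is that you phrase the bounds as matrix inequalities on $K = \Exp_S[Z_S Z^\dagger Z_S]$ via the projection $\Pi_S = M^{1/2} Z_S M^{1/2}$ (itself already established in the paper's preceding lemma), where the paper argues directly on the scalar Rayleigh quotient.
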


\begin{proof}
If we denote  $c = \lambda_{\max} (M^{-1/2} Z^\dagger M^{-1/2})$, then it follows that  $Z^{\dagger} \preceq c M$.  Using this matrix inequality in the definition of $\nu_{\max}$ we have:
\begin{align*}
\frac{\Exp [\| Z_S u\|^2_{Z^\dagger}]}{\|u\|_Z^2} & = \frac{\Exp [u^T Z_S Z^\dagger Z_S u]}{u^T Z u} \leq  \frac{\Exp [c \cdot  u^T Z_S M Z_S u]}{u^T Z u} \\
& = c \frac{\Exp [u^T Z_S u]}{u^T Z u} = c \quad \forall u \in \ker(A), u \not=0.
\end{align*}
This proves that $\nu_{\max} \leq c < \infty$ provided that Assumptions \ref{ass_lip} and \ref{ass_Z} hold. Now, we will show that
$\sigma_Z \leq \nu_{\max}$ if additionally Assumption \ref{ass_scZ} holds.
Indeed, from Jensen inequality we have:
\begin{align*}
\nu_{\max} &= \max_{u \in \ker(A), u \not=0} \frac{\Exp [\| Z_S u\|^2_{Z^\dagger}]}{\|u\|_Z^2} \geq \max_{u \in \ker(A), u \not=0} \frac{\| \Exp [Z_S] u\|^2_{Z^\dagger}}{\|u\|_Z^2}\\
& =  \max_{u \in \ker(A), u \not=0} \frac{\|  Z   u\|^2_{Z^\dagger}}{\|u\|_Z^2}
= \max_{u \in \ker(A), u \not=0} \frac{\|u\|^2_{Z }}{\|u\|_Z^2}  = 1 \overset{\eqref{asdfasdfasdfa}}{\geq } \sigma_Z,
\end{align*}
which concludes the proof.
\end{proof}

EXAMPLE 6 cont. \textit{For the optimization problem considered in Example \ref{example} we can easily compute a good upper approximation for} $\nu_{\max}$:
\begin{align*}
\nu_{\max} &= \!\! \max_{u \in \ker(A), u \not=0} \!\! \frac{\Exp[u^T Z_{(i,j)} Z^\dagger Z_{(i,j)} u]}{u^T Z u} = \!\! \max_{u \in \ker(A), u \not=0} \!\! \frac{\Exp \left[ \frac{2(n-1)L}{n(L_i + L_j)}  u^T  Z_{(i,j)} u \right]}{u^T Z u}  \leq \max_{i<j} \frac{2(n-1)L}{n(L_i + L_j)},
\end{align*}
\textit{where we used that $(e_i - e_j)^T (I_n - 1/n \ e e^T) (e_i - e_j) =2$. This relation  shows that $\nu_{\max} \sim  L \; (:= \sum_i L_i)$  and consequently it is related to a global Lipschiz type constant  for the gradient of  $f$.}

For simplicity of the exposition let us also denote:
\[ g^k = - Z_S \nabla f(y^k) \quad \left(=-S P_S  (P^T_S S^T M S P_S)^\dagger P^T_S S^T \nabla f(y^k)\right). \]
From the updates of A-\RSD we can also show a descent property  for the conditional expectation $\Exp [f(x^{k+1})| {\cal F}_k]$. Indeed, from our updates and Assumption \ref{ass_lip} we have:
\begin{align*}
f(x^{k+1}) & = f(y^{k} + g_k) \leq f(y^k) + \ve{\nabla f(y^k)}{g_k} + \frac12 \| g_k\|_M^2.
\end{align*}
Taking now the conditional expectation with respect to random choice $S$ and using that $Z_S M Z_S = Z_S$ (see Lemma \ref{lema1}) we obtain:
\begin{align*}
\Exp [f(x^{k+1})| {\cal F}_k] & \leq f(y_k)
+\ve{\nabla f(y_k)}  {\Exp[g_k | {\cal F}_k] } +\frac12 \Exp [ \| g_k\|^2_M | {\cal F}_k] \\
& = f(y_k) +\ve{\nabla f(y_k)}  {\Exp[-Z_S \nabla f(y^k) | {\cal F}_k] } +\frac12 \Exp [ \| Z_S \nabla f(y^k)\|^2_M | {\cal F}_k] \\
&= f(y_k) - \|\nabla f(y_k)\|_Z^2 + \frac12  \| \nabla f(y^k) \|^2_Z   = f(y_k ) - \frac12 \|\nabla f(y_k)\|_Z^2.
\tagthis
\label{eq:arcd_desc}
\end{align*}
Moreover, the sequences $x^k, y^k$ and $v^k$ satisfies $x^k- x^* \in \ker(A)$, $y^k - x^* \in \ker(A)$ and $v^k - x^* \in \ker(A)$, and consequently also $x^k - y^k \in \ker(A)$. Moreover, since  $\range(A^T) = \ker(Z)$ (see Lemma \ref{lem3}), then $Z^\dagger Z$ is a projection matrix onto $\ker(A)$, that is  $Z^\dagger Z u = u$ for all $u \in  \ker(A)$, and thus the following holds:
\begin{align}
\label{eq:arcd_zz}
Z^\dagger Z (x^* - y^k) = x^* - y^k \quad \text{and} \quad  Z^\dagger Z (x^k - y^k) = x^k - y^k.
\end{align}
For any optimal point $x^*$ let us also define the sequence:
\begin{align}
\label{rk2}
r_k^2 = \|v^k - x^* \|_{Z^\dagger}^2.
\end{align}
Based on the previous discussion, we can show the following descent property for the sequence $r_k$ of Algorithm A-\RSD that holds also for the case $\sigma_Z=0$:

\begin{lemma}
\label{lem_arcd}
Under Assumptions \ref{ass_lip}, \ref{ass_Z} and \ref{ass_scZ} and any choices for the sequences  $\{\alpha_k\}_{k=0}^\infty \in (0, \ 1]$, $\{\beta_k\}_{k=0}^\infty \in (0, \ 1]$ and $\{\gamma_k\}_{k=0}^\infty \in (0, \ \infty)$ the Algorithm A-\RSD produces a sequence of points $(x_k, y_k, v_k)$ such that the following descent inequality holds:
\begin{align*}
& \Exp [r_{k+1}^2 + 2 \gamma_k^2 \nu  (f(x^{k+1}) - f^*) | {\cal F}_k ]  \leq   \beta_k \left( r_k^2 +  2 \gamma_k \frac{1-\alpha_k}{\alpha_k} (f(x^k) - f^*) \right) \tagthis
\label{eq:recursion_arcd}\\
& \qquad  + (1 - \beta_k - \gamma_k \sigma_Z) \|y^k - x^* \|_{Z^\dagger}^2 + \left( 2 \gamma_k^2 \nu  - 2 \gamma_k - 2 \gamma_k \beta_k \frac{1-\alpha_k}{\alpha_k} \right) (f(y^k) - f^*).
\end{align*}
\end{lemma}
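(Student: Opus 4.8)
The plan is to expand the recursion for $r_{k+1}^2 = \|v^{k+1} - x^*\|_{Z^\dagger}^2$ and take the conditional expectation given ${\cal F}_k$. Writing $g^k = -Z_S \nabla f(y^k)$, the $v^{k+1}$-update gives $v^{k+1} - x^* = w + \gamma_k g^k$, where $w := \beta_k(v^k - x^*) + (1-\beta_k)(y^k - x^*)$ is ${\cal F}_k$-measurable and lies in $\ker(A)$ (since $v^k-x^*, y^k-x^* \in \ker(A)$). Expanding the squared norm produces $\|w\|_{Z^\dagger}^2$, a cross term $2\gamma_k\langle w, g^k\rangle_{Z^\dagger}$, and a variance term $\gamma_k^2\|g^k\|_{Z^\dagger}^2$. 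Taking conditional expectation, I use $\Exp[g^k|{\cal F}_k] = -Z\nabla f(y^k)$ (as $\Exp[Z_S]=Z$) together with the identity $Z^\dagger Z w = w$, valid because $w \in \ker(A)$ and $Z^\dagger Z$ is the orthogonal projector onto $\ker(A)$ (Lemma~\ref{lem3}); this collapses the cross term to $-2\gamma_k\langle w, \nabla f(y^k)\rangle$.

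Next I bound the remaining two pieces. For the variance term I invoke \eqref{eq:arcd_nu}: since $Z_S$ and $Z$ annihilate $\range(A^T)$, both $\Exp[\|Z_S v\|_{Z^\dagger}^2]$ and $\|v\|_Z^2$ depend only on the $\ker(A)$-component of $v$, so the bound $\Exp[\|Z_S v\|_{Z^\dagger}^2] \leq \nu\|v\|_Z^2$ extends from $\ker(A)$ to every $v \in \R^n$, giving $\gamma_k^2\Exp[\|g^k\|_{Z^\dagger}^2|{\cal F}_k] \leq \gamma_k^2\nu\|\nabla f(y^k)\|_Z^2$. For $\|w\|_{Z^\dagger}^2$, since $w$ is a convex combination of $v^k-x^*$ and $y^k-x^*$ and $u \mapsto u^T Z^\dagger u$ is convex, Jensen yields $\|w\|_{Z^\dagger}^2 \leq \beta_k r_k^2 + (1-\beta_k)\|y^k - x^*\|_{Z^\dagger}^2$.

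The central step is a lower bound on $\langle w, \nabla f(y^k)\rangle$, needed because it appears with a negative sign. From the $y^k$-update I derive $v^k - y^k = \tfrac{1-\alpha_k}{\alpha_k}(y^k - x^k)$, which lets me write $\langle w, \nabla f(y^k)\rangle = \beta_k\tfrac{1-\alpha_k}{\alpha_k}\langle y^k - x^k, \nabla f(y^k)\rangle + \langle y^k - x^*, \nabla f(y^k)\rangle$. Applying convexity to the first inner product ($\geq f(y^k)-f(x^k)$) and the strong-convexity inequality of Assumption~\ref{ass_scZ} to the second ($\geq f(y^k)-f^* + \tfrac{\sigma_Z}{2}\|y^k-x^*\|_{Z^\dagger}^2$), with the nonnegative multipliers $\beta_k\tfrac{1-\alpha_k}{\alpha_k}$ and $1$, gives the lower bound; these estimates are legitimate since $x^k, y^k, v^k \in x^0 + \ker(A)$. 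Finally I convert the variance contribution to function values through the descent inequality \eqref{eq:arcd_desc}, namely $\|\nabla f(y^k)\|_Z^2 \leq 2\big(f(y^k) - \Exp[f(x^{k+1})|{\cal F}_k]\big)$, and move the $\Exp[f(x^{k+1})|{\cal F}_k]$ piece to the left to assemble the claimed $2\gamma_k^2\nu(f(x^{k+1})-f^*)$ term.

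The rest is bookkeeping: substituting $f(y^k)-f(x^k) = (f(y^k)-f^*) - (f(x^k)-f^*)$ and collecting coefficients, the $\|y^k-x^*\|_{Z^\dagger}^2$ terms combine into $1-\beta_k-\gamma_k\sigma_Z$, the $f(x^k)-f^*$ term joins $\beta_k r_k^2$ to form $\beta_k\big(r_k^2 + 2\gamma_k\tfrac{1-\alpha_k}{\alpha_k}(f(x^k)-f^*)\big)$, and the $f(y^k)-f^*$ terms gather into $2\gamma_k^2\nu - 2\gamma_k - 2\gamma_k\beta_k\tfrac{1-\alpha_k}{\alpha_k}$, matching the statement. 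The main obstacle I anticipate is keeping inequality directions consistent: I must ensure every convexity multiplier is nonnegative so that the lower bound on $\langle w, \nabla f(y^k)\rangle$ flips into a correct upper bound, and track the $\ker(A)$-membership of the iterates carefully so that the strong-convexity and descent estimates genuinely apply to $y^k$.
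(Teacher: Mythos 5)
Your proposal is correct and follows essentially the same route as the paper's proof: the same expansion of $r_{k+1}^2$, convexity of $u \mapsto u^T Z^\dagger u$ for the $w$-term, the variance bound via $\nu$, the projector identity $Z^\dagger Z w = w$, the relation $v^k - y^k = \tfrac{1-\alpha_k}{\alpha_k}(y^k - x^k)$, strong convexity plus plain convexity on the cross term, conversion through the descent inequality \eqref{eq:arcd_desc}, and the same coefficient bookkeeping. Your one addition — justifying that the $\nu_{\max}$ bound extends from $u \in \ker(A)$ to arbitrary $\nabla f(y^k) \in \R^n$ because $Z_S$ and $Z$ annihilate $\range(A^T)$ — is a correct and welcome detail that the paper applies silently.
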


\begin{proof}
Using the definition of $r_{k+1}$  we have:
\begin{align*}
r_{k+1}^2 & \overset{\eqref{rk2}}{=}
\|v^{k+1} - x^* \|_{Z^\dagger}^2
 = \| \beta_k v^k + (1-\beta_k) y^k - x^* + \gamma_k g_k \|_{Z^\dagger}^2\\
&= \| \beta_k v^k + (1-\beta_k) y^k - x^* \|_{Z^\dagger}^2 + \gamma_k^2 \|g_k\|_{Z^\dagger}^2 \\
& \qquad + 2 \gamma_k \left( \beta_k v^k + (1-\beta_k) y^k - x^* \right)^T Z^\dagger g_k \\
& \leq  \beta_k \|v^k - x^*\|_{Z^\dagger}^2 + (1-\beta_k) \|y^k - x^*\|_{Z^\dagger}^2 + \gamma_k^2 \|g_k\|_{Z^\dagger}^2 \\
& \qquad + 2\gamma_k \left( \beta_k v^k + (1-\beta_k) y^k - x^* \right)^T Z^\dagger g_k,
\end{align*}
where  in the last inequality we used the convexity of the norm and the fact that $\beta_k \in [0, \ 1]$. Taking now the conditional expectation with respect to ${\cal F}_k$ we get:
\begin{align*}
\Exp[r_{k+1}^2 | {\cal F}_k] & \leq  \beta_k \|v^k - x^*\|_{Z^\dagger}^2 + (1-\beta_k) \|y^k - x^*\|_{Z^\dagger}^2 + \gamma_k^2 \Exp[\|-Z_S \nabla f(y^k)\|_{Z^\dagger}^2 | {\cal F}_k] \\
& \qquad + 2\gamma_k \left( \beta_k v^k + (1-\beta_k) y^k - x^* \right)^T Z^\dagger (- Z \nabla f(y^k)) \\
& \overset{\eqref{eq:arcd_nu}}{\leq} \beta_k r_k^2 + (1-\beta_k) \|y^k - x^*\|_{Z^\dagger}^2 + \gamma_k^2 \nu \|\nabla f(y^k)\|_{Z}^2 \\
& \qquad + 2 \gamma_k \left(x^* - \beta_k v^k - (1-\beta_k) y^k  \right)^T Z^\dagger Z \nabla f(y^k) \\
& = \beta_k r_k^2 + (1-\beta_k) \|y^k - x^*\|_{Z^\dagger}^2 + \gamma_k^2 \nu \|\nabla f(y^k)\|_{Z}^2  \\
& \qquad + 2 \gamma_k \left(x^* - y^k \right) Z^\dagger Z \nabla f(y^k) - 2 \gamma_k  \beta_k \left(v^k -  y^k \right)^T Z^\dagger Z \nabla f(y^k) \\
& = \beta_k r_k^2 + (1-\beta_k) \|y^k - x^*\|_{Z^\dagger}^2 + \gamma_k^2 \nu \|\nabla f(y^k)\|_{Z}^2  \\
& \qquad + 2 \gamma_k \left(x^* - y^k \right) Z^\dagger Z \nabla f(y^k) - 2 \gamma_k  \beta_k \frac{1 - \alpha_k}{\alpha_k} \left(y^k -  x^k \right)^T Z^\dagger Z \nabla f(y^k)\\
& \overset{\eqref{eq:arcd_desc}}{\leq} \beta_k r_k^2 + (1-\beta_k) \|y^k - x^*\|_{Z^\dagger}^2 + 2 \gamma_k^2 \nu  \left( f(y^k)  - \Exp[f(x^{k+1}) | x^k] \right)\\
& \qquad  + 2 \gamma_k \left(x^* - y^k \right) Z^\dagger Z \nabla f(y^k) - 2 \gamma_k  \beta_k \frac{1 - \alpha_k}{\alpha_k} \left(y^k -  x^k \right)^T Z^\dagger Z \nabla f(y^k).
\end{align*}
Rearranging the terms,  we get:
\begin{align*}
& \Exp [r_{k+1}^2 + 2 \gamma_k^2 \nu  (f(x^{k+1}) - f^*) | x^k ]\\
& \quad \leq \beta_k r_k^2 + (1-\beta_k) \|y^k - x^* \|_{Z^\dagger}^2 +  2 \gamma_k^2 \nu (f(y^k ) - f^*) \\
& \qquad   +  2 \gamma_k \left(x^* - y^k\right)^T Z^\dagger Z \nabla f(y^k) + 2 \gamma_k \beta_k \frac{1-\alpha_k}{\alpha_k} \left(x^k - y^k\right)^T Z^\dagger Z \nabla f(y^k)\\
&\quad  \overset{\eqref{eq:arcd_zz}}{\leq} \beta_k r_k^2 + (1-\beta_k) \|y^k - x^* \|_{Z^\dagger}^2 +  2 \gamma_k^2 \nu (f(y^k ) - f^*) \\
& \qquad   +  2 \gamma_k \left( x^* - y^k \right)^T  \nabla f(y^k) + 2 \gamma_k \beta_k \frac{1-\alpha_k}{\alpha_k} \left( x^k - y^k \right)^T \nabla f(y^k)\\
& \quad \overset{\eqref{strongq}}{\leq} \beta_k r_k^2 + (1-\beta_k) \|y^k - x^* \|_{Z^\dagger}^2 +  2 \gamma_k^2 \nu (f(y^k) - f^*) \\
& \qquad   +  2 \gamma_k \left( f^* - f(y^k) - \frac{\sigma_Z}{2} \| y^k - x^* \|^2_{Z^\dagger} \right) + 2 \gamma_k \beta_k \frac{1-\alpha_k}{\alpha_k} \left(x^k - y^k\right)^T \nabla f(y^k).
\end{align*}
Note that the previous derivations also hold without Assumption \ref{ass_scZ}, that is  we use the strong convexity inequality \eqref{strongq} with $\sigma_Z=0$. Using now the convexity of the function $f$ and that $\alpha_k \in (0, \ 1]$, we further get:
\begin{align*}
& \Exp [r_{k+1}^2 + 2 \gamma_k^2 \nu  (f(x^{k+1}) - f^*) | x^k ]\\
& \quad \leq \beta_k r_k^2 + (1-\beta_k - \gamma_k \sigma_Z) \|y^k - x^* \|_{Z^\dagger}^2 \\
& \qquad + (2 \gamma_k^2 \nu  - 2 \gamma_k) (f(y^k) - f^*) +  2 \gamma_k \beta_k \frac{1-\alpha_k}{\alpha_k} (f(x^k) - f(y^k))\\
& \quad =  \beta_k \left( r_k^2 +  2 \gamma_k \frac{1-\alpha_k}{\alpha_k} (f(x^k) - f^*) \right) + (1 - \beta_k - \gamma_k \sigma_Z) \|y^k - x^* \|_{Z^\dagger}^2 \\
& \qquad + \left( 2 \gamma_k^2 \nu  - 2 \gamma_k - 2 \gamma_k \beta_k \frac{1-\alpha_k}{\alpha_k} \right) (f(y^k) - f^*),
\end{align*}
which concludes our statement.
\end{proof}
Based on the previous descent property we can derive different convergence rates for our algorithm A-\RSD depending on the assumptions imposed on the objective function $f$.


\subsection{Convergence rate: smooth convex case}
In this section we prove the sublinear convergence rate for A-\RSD (Algorithm~\ref{alg:arcd}) for some choices of the sequences  $\{\alpha_k\}_{k=0}^\infty$, $\{\beta_k\}_{k=0}^\infty$ and $\{\gamma_k\}_{k=0}^\infty$. In particular, the next lemma shows the behavior of $\{\gamma_k\}_{k=0}^\infty$ defined as follows:

\begin{lemma}
\label{lem:gammak}
Let $\{\gamma_k\}_{k=0}^\infty$ be a sequence defined recursively as
$\gamma_0 = \frac1\nu$ and  $\gamma_{k+1}$ be the largest solution of the second order equation:
\begin{align}
\label{asfafawefawa}
\gamma_{k+1}^2 - \tfrac1\nu \gamma_{k+1}  = \gamma_k^2.
\end{align}
Then, $\gamma_k$ satisfies the following inequality:
\begin{align}
\gamma_{k} \geq \frac{k+2}{2\nu}.
\label{asdfawfwaefwa}
\end{align}
\end{lemma}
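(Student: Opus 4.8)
The plan is to turn the quadratic recursion \eqref{asfafawefawa} into a uniform lower bound on the per-step increment $\gamma_{k+1}-\gamma_k$, and then obtain \eqref{asdfawfwaefwa} by a one-line induction (equivalently, telescoping). First I would record the basic structure of the recursion. Solving $\gamma_{k+1}^2 - \tfrac1\nu \gamma_{k+1} - \gamma_k^2 = 0$ by the quadratic formula and keeping the largest root gives
\[
\gamma_{k+1} = \frac{1}{2\nu} + \frac12 \sqrt{\frac{1}{\nu^2} + 4\gamma_k^2} > 0,
\]
so every iterate is strictly positive. Moreover, from \eqref{asfafawefawa} we have $\gamma_{k+1}^2 = \gamma_k^2 + \tfrac1\nu \gamma_{k+1} \geq \gamma_k^2$, and since both sides are nonnegative this yields the monotonicity $\gamma_{k+1} \geq \gamma_k > 0$.

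The key step is to rewrite \eqref{asfafawefawa} as a difference of squares, $(\gamma_{k+1}-\gamma_k)(\gamma_{k+1}+\gamma_k) = \tfrac1\nu \gamma_{k+1}$, and then exploit the monotonicity to bound the sum factor by $\gamma_{k+1}+\gamma_k \leq 2\gamma_{k+1}$. This gives
\[
\gamma_{k+1} - \gamma_k = \frac{\gamma_{k+1}/\nu}{\gamma_{k+1}+\gamma_k} \geq \frac{\gamma_{k+1}/\nu}{2\gamma_{k+1}} = \frac{1}{2\nu},
\]
so each iteration increases $\gamma_k$ by at least $\tfrac{1}{2\nu}$.

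Finally I would close the argument by induction on $k$. The base case holds with equality, since $\gamma_0 = \tfrac1\nu = \tfrac{0+2}{2\nu}$. For the inductive step, assuming $\gamma_k \geq \tfrac{k+2}{2\nu}$, the increment bound yields $\gamma_{k+1} \geq \gamma_k + \tfrac{1}{2\nu} \geq \tfrac{k+2}{2\nu} + \tfrac{1}{2\nu} = \tfrac{k+3}{2\nu}$, which is precisely \eqref{asdfawfwaefwa} at index $k+1$. Equivalently, one may simply sum the per-step bound from $\gamma_0$.

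I do not anticipate a genuine obstacle here: the only point requiring care is justifying $\gamma_{k+1}+\gamma_k \leq 2\gamma_{k+1}$, which rests on the monotonicity $\gamma_k \leq \gamma_{k+1}$; that in turn relies on having selected the \emph{largest} (hence positive) root of \eqref{asfafawefawa} together with the nonnegativity of $\gamma_k^2$. Everything else is an elementary telescoping computation.
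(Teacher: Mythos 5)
Your proof is correct and follows essentially the same route as the paper: monotonicity from selecting the largest root, the difference-of-squares factoring $(\gamma_{k+1}-\gamma_k)(\gamma_{k+1}+\gamma_k)=\tfrac1\nu\gamma_{k+1}$ bounded via $\gamma_{k+1}+\gamma_k\leq 2\gamma_{k+1}$, and telescoping from $\gamma_0=\tfrac1\nu$. The only cosmetic difference is that you derive monotonicity from $\gamma_{k+1}^2=\gamma_k^2+\tfrac1\nu\gamma_{k+1}\geq\gamma_k^2$ with positivity of the largest root, while the paper reads it off the explicit root formula; both are equally valid.
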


\begin{proof}
First, we observe that $\{\gamma_k\}_{k=0}^\infty$ is a non-decreasing sequence. Indeed, the largest root of \eqref{asfafawefawa} is given by:
\begin{align}
\gamma_{k+1} = \frac{\frac1\nu + \sqrt{\frac1{\nu^2} +4 \gamma_k^2 }}{2}
\geq  \frac{\sqrt{ 4 \gamma_k^2}}{2} = \gamma_k.
\label{afwdefawefaw}
\end{align}
Next, we have:
\begin{align*}
\tfrac1\nu \gamma_{k+1}  \overset{\eqref{asfafawefawa}}{=} \gamma_{k+1}^2 - \gamma_k^2 = (\gamma_{k+1}  - \gamma_k) (\gamma_{k+1}  + \gamma_k) \overset{\eqref{afwdefawefaw}}{\leq} 2 \gamma_{k+1} (\gamma_{k+1}  - \gamma_k),
\tagthis
\label{asfawefawefwa}
\end{align*}
which implies that
\begin{align*}
\gamma_k + \tfrac1{2\nu} \leq \gamma_{k+1} \quad \Rightarrow \quad
\gamma_k \geq \gamma_0 + k \frac1{2\nu}
= \frac{2+k}{2\nu}.
\end{align*}
This concludes our proof.
\end{proof}

\noindent From \eqref{asdfawfwaefwa} it follows that $\gamma_k \nu \geq 1$ for all $k \geq 0$. Now, we are ready to prove the sublinear convergence of A-\RSD:
\begin{theorem}
Under Assumptions \ref{ass_lip} and \ref{ass_Z} the sequences generated by Algorithm A-\RSD with $\alpha_k = \frac1{\gamma_k \nu} \in (0, \ 1]$, $\beta_k = 1$, $\gamma_0 = \frac1\nu$ and $\gamma_k$ be the largest solution defined by recursion  \eqref{asfafawefawa}, satisfy the following sublinear convergence rate in expectation:
\begin{align*}
\Exp \left[f(x^{k }) - f^* \right] \leq \frac{2 \nu }{(k+1)^2} \min_{x^* \in X^*} \|x^0 - x^* \|_{Z^\dagger}^2 \quad \forall k \geq 1.
\end{align*}
\end{theorem}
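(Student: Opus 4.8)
The plan is to specialize the master descent inequality \eqref{eq:recursion_arcd} of Lemma \ref{lem_arcd} to the stated parameters and watch most of its right-hand side collapse. Setting $\beta_k = 1$ immediately kills the coefficient $(1-\beta_k-\gamma_k\sigma_Z)$ of $\|y^k-x^*\|_{Z^\dagger}^2$, since we are in the convex (not strongly convex) regime and may take $\sigma_Z=0$, as permitted by the remark inside the proof of Lemma \ref{lem_arcd}. With $\alpha_k = 1/(\gamma_k\nu)$ one has $(1-\alpha_k)/\alpha_k = \gamma_k\nu - 1$, so the coefficient of $(f(y^k)-f^*)$ becomes $2\gamma_k^2\nu - 2\gamma_k - 2\gamma_k(\gamma_k\nu-1) = 0$. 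Hence both $y^k$-terms disappear and \eqref{eq:recursion_arcd} reduces to the clean recursion
\[
\Exp[r_{k+1}^2 + 2\gamma_k^2\nu(f(x^{k+1})-f^*)\mid {\cal F}_k] \le r_k^2 + (2\gamma_k^2\nu - 2\gamma_k)(f(x^k)-f^*).
\]

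Next I would convert this into a supermartingale in a single Lyapunov potential. The recursion \eqref{asfafawefawa}, multiplied by $\nu$, reads $\gamma_k^2\nu - \gamma_k = \gamma_{k-1}^2\nu$ for $k\ge 1$, so $2\gamma_k^2\nu - 2\gamma_k = 2\gamma_{k-1}^2\nu$ and the previous display becomes $\Exp[\Phi_{k+1}\mid{\cal F}_k]\le \Phi_k$ for the potential $\Phi_k := r_k^2 + 2\gamma_{k-1}^2\nu(f(x^k)-f^*)$, noting that $\Phi_{k+1}=r_{k+1}^2+2\gamma_k^2\nu(f(x^{k+1})-f^*)$ matches the left-hand side exactly. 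The base case $k=0$ is special: since $\gamma_0=1/\nu$ forces $\alpha_0=1$, the factor $(1-\alpha_0)/\alpha_0$ vanishes, so the $k=0$ instance of the reduced recursion gives directly $\Exp[\Phi_1]\le r_0^2 = \|v^0-x^*\|_{Z^\dagger}^2 = \|x^0-x^*\|_{Z^\dagger}^2$, the last step using $v^0=x^0$ from the initialization.

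Taking total expectations and using the tower property, the relation $\Exp[\Phi_{k+1}]\le\Exp[\Phi_k]$ telescopes from the base case to give $\Exp[\Phi_k]\le r_0^2$ for every $k\ge 1$. Dropping the nonnegative term $r_k^2$ leaves $2\gamma_{k-1}^2\nu\,\Exp[f(x^k)-f^*]\le r_0^2$, and substituting the growth estimate \eqref{asdfawfwaefwa} at index $k-1$, namely $\gamma_{k-1}\ge (k+1)/(2\nu)$, yields $\Exp[f(x^k)-f^*]\le 2\nu r_0^2/(k+1)^2$. Since $r_0^2=\|x^0-x^*\|_{Z^\dagger}^2$ holds for every $x^*\in X^*$, minimizing over $x^*\in X^*$ produces the claimed bound.

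I would expect the only real subtlety to be bookkeeping rather than a deep obstacle: keeping the index shift straight (the $f$-term at step $k$ is weighted by $\gamma_{k-1}^2$, one step behind the $r_k^2$ term) and treating the $k=0$ step separately so that the degenerate choice $\alpha_0=1$ is exploited to close the telescope without leaving a dangling $(f(x^0)-f^*)$ contribution. The algebraic cancellations in the first paragraph are the crux that makes the scheme accelerate; verifying they are \emph{exact} — in particular that the $(f(y^k)-f^*)$ coefficient is identically zero and not merely nonpositive — is where the specific values of $\alpha_k$, $\beta_k$ and $\gamma_k$ must all conspire.
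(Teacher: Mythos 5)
Your proposal is correct and follows essentially the same route as the paper: both specialize the descent inequality \eqref{eq:recursion_arcd} with $\beta_k=1$ and $\sigma_Z=0$, exploit the exact cancellation of the $(f(y^k)-f^*)$ coefficient under $\alpha_k = 1/(\gamma_k\nu)$, telescope a single Lyapunov potential built from the recursion \eqref{asfafawefawa}, and finish with the growth bound \eqref{asdfawfwaefwa}. The only difference is bookkeeping: the paper keeps the coefficient as $\gamma_k^2 - \gamma_k/\nu$ (noting it vanishes at $k=0$ since $\gamma_0 = 1/\nu$), whereas you shift the index to weight $f(x^k)$ by $\gamma_{k-1}^2$ and handle the base case via $\alpha_0=1$ --- these are equivalent.
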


\begin{proof}
In the smooth convex case we can use  Lemma \ref{lem_arcd} (i.e. descent relation \eqref{eq:recursion_arcd}) by setting $\sigma_Z = 0$, i.e. we have:
\begin{align*}
\Exp [r_{k+1}^2 + 2 \gamma_k^2 \nu  (f(x^{k+1}) - f^*) | {\cal F}_k ]
&\overset{\eqref{eq:recursion_arcd}}{\leq} \left( r_k^2 +  2 \gamma_k \frac{1-\alpha_k}{\alpha_k} (f(x^k) - f^*) \right)   \\
& \qquad + \left( 2 \gamma_k^2 \nu  - 2 \gamma_k   \frac{1 }{\alpha_k}
\right) (f(y^k) - f^*).
\tagthis \label{eq:afwfweafwaeva}
\end{align*}
Note that  $\alpha_k = \frac1{\gamma_k \nu}$ and hence the last term in \eqref{eq:afwfweafwaeva} vanishes. Thus, we further obtain:
\begin{align*}
\tagthis\label{afsafa}
\Exp [r_{k+1}^2 + 2 \gamma_k^2 \nu  (f(x^{k+1}) - f^*) | {\cal F}_k ]
&\overset{\eqref{eq:afwfweafwaeva} }{\leq}   r_k^2 +  2 \gamma_k \frac{1-\alpha_k}{\alpha_k} (f(x^k) - f^*).
\end{align*}
Moreover, since $\alpha_k = \frac1{\gamma_k \nu}$,  then:
\begin{align}
2 \gamma_k \frac{1-\alpha_k}{\alpha_k} = 2 \gamma_k^2 \nu \left( 1- \frac1{\gamma_k \nu} \right) = 2 \gamma_k^2 \nu  - 2 \gamma_k.
\label{Asdfsadfasdfa}
\end{align}
Plugging \eqref{Asdfsadfasdfa} into \eqref{afsafa} and dividing both sides by $2\nu$ we obtain:
\begin{align*}
\Exp [\tfrac{1}{2\nu}r_{k+1}^2 +    \gamma_k^2    (f(x^{k+1}) - f^*) | {\cal F}_k ] \leq \left(\tfrac{1}{2\nu} r_k^2 + (\gamma_k^2 - \tfrac1{\nu} \gamma_k) (f(x^k) - f^*) \right).
\tagthis
\label{Afweaefweaefa}
\end{align*}
Now, it reminds  to note that $\gamma_{k+1}$ satisfy \eqref{asfafawefawa} and consequently:
\begin{align*}
\Exp [\tfrac{1}{2\nu}r_{k+1}^2 +  (\gamma_{k+1}^2- \tfrac1{ \nu}\gamma_{k+1})   (f(x^{k+1}) - f^*) | {\cal F}_k] \leq  \left(\tfrac{1}{2\nu} r_k^2 + (\gamma_k^2- \tfrac1{ \nu} \gamma_k) (f(x^k) - f^*) \right).
\end{align*}
Taking now the expectation over the entire history in the previous recursion and  unrolling it,  we get:
\begin{align*}
\Exp [  (\gamma_{k}^2 - \tfrac1{ \nu}\gamma_{k}) (f(x^{k}) - f^*)]
& \leq \Exp [\tfrac{1}{2\nu}r_{k}^2 + (\gamma_{k}^2 - \tfrac1{ \nu}\gamma_{k}) (f(x^{k}) - f^*)] \\
& \leq \left(\tfrac{1}{2\nu} r_0^2  + (\gamma_0^2- \tfrac1{\nu} \gamma_0) (f(x^0) - f^*) \right).
\end{align*}
Since we have the second order equation $\gamma_{k}^2 - \tfrac1{\nu} \gamma_{k}  =  \gamma_{k-1}^2 \overset{\eqref{asdfawfwaefwa}}{\geq} \left(\frac{k+1}{2\nu}\right)^2$ for all $k \geq 1$, we get our statement.
\end{proof}


\subsection{Convergence rate: smooth strongly convex case}
We are now ready to state the linear convergence rate for A-\RSD (Algorithm~\ref{alg:arcd}).
\begin{theorem}
\label{Thm:AcceRate}
Under Assumptions \ref{ass_lip}, \ref{ass_Z} and \ref{ass_scZ} the sequences generated by Algorithm A-\RSD with
$\alpha_k = \frac{\gamma_k \sigma_Z}{1 + \gamma_k \sigma_Z} \in (0, \ 1]$,
$\beta_k = 1 - \gamma_k \sigma_Z \in [0, \ 1]$ and
$\gamma_k = \frac{1}{\sqrt{\sigma_Z \nu}} \leq \frac{1}{\sigma_Z}$
satisfy the following linear convergence rate in expectation:
\begin{align*}
\Exp \left[r_{k}^2+  \frac{2}{\sigma_Z} (f(x^{k}) - f^*) \right] & \leq \left( 1 - \sqrt{\frac{\sigma_Z}{\nu}} \right)^k \left( r_0^2 + \frac{2}{\sigma_Z} (f(x^0) - f^*) \right).
\end{align*}
\end{theorem}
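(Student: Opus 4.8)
The plan is to specialize the master descent inequality \eqref{eq:recursion_arcd} from Lemma \ref{lem_arcd} to the three prescribed sequences $\alpha_k,\beta_k,\gamma_k$ and to show that, with these choices, the potential
\[ \Phi_k = r_k^2 + \tfrac{2}{\sigma_Z}\bigl(f(x^k)-f^*\bigr) \]
contracts by the constant factor $1-\sqrt{\sigma_Z/\nu}$ at every step in conditional expectation. Since \eqref{eq:recursion_arcd} already holds under Assumptions \ref{ass_lip}, \ref{ass_Z} and \ref{ass_scZ}, no new analytical work is needed: the whole argument reduces to algebraic simplification followed by unrolling a linear recursion.

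First I would compute $\tfrac{1-\alpha_k}{\alpha_k}$ for $\alpha_k = \tfrac{\gamma_k\sigma_Z}{1+\gamma_k\sigma_Z}$, obtaining $1-\alpha_k = \tfrac1{1+\gamma_k\sigma_Z}$ and hence $\tfrac{1-\alpha_k}{\alpha_k} = \tfrac1{\gamma_k\sigma_Z}$. Substituting $\beta_k = 1-\gamma_k\sigma_Z$ then immediately kills the middle term of \eqref{eq:recursion_arcd}, since its coefficient $1-\beta_k-\gamma_k\sigma_Z$ equals zero. For the last term I would check that, using $\tfrac{1-\alpha_k}{\alpha_k}=\tfrac1{\gamma_k\sigma_Z}$,
\[ 2\gamma_k^2\nu - 2\gamma_k - 2\gamma_k\beta_k\tfrac{1-\alpha_k}{\alpha_k} = 2\gamma_k^2\nu - \tfrac{2}{\sigma_Z}, \]
which vanishes precisely because $\gamma_k=\tfrac1{\sqrt{\sigma_Z\nu}}$ gives $\gamma_k^2\nu=\tfrac1{\sigma_Z}$. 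With both terms removed, \eqref{eq:recursion_arcd} collapses to $\Exp[\Phi_{k+1}\mid{\cal F}_k] \le \beta_k\Phi_k$, after observing that the single identity $\gamma_k^2\nu = \gamma_k\tfrac{1-\alpha_k}{\alpha_k} = \tfrac1{\sigma_Z}$ rewrites both function-value terms of \eqref{eq:recursion_arcd} with the common coefficient $\tfrac{2}{\sigma_Z}$.

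Finally I would identify $\beta_k = 1-\gamma_k\sigma_Z = 1-\sqrt{\sigma_Z/\nu}$, which is independent of $k$, so that taking full expectation and iterating the one-step contraction $\Exp[\Phi_{k+1}\mid{\cal F}_k]\le\beta_k\Phi_k$ from $0$ to $k-1$ yields $\Exp[\Phi_k]\le(1-\sqrt{\sigma_Z/\nu})^k\Phi_0$, which is the claimed bound. The only genuine obstacle is the \emph{simultaneous} cancellation of the two coefficients: the main point is to see that the triple $(\alpha_k,\beta_k,\gamma_k)$ is calibrated so that the coefficient of $\|y^k-x^*\|_{Z^\dagger}^2$ and that of $f(y^k)-f^*$ both vanish at once, which is what permits the clean geometric recursion. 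Verifying en route that $\beta_k\in[0,1]$ and $1-\sqrt{\sigma_Z/\nu}\ge0$ uses $\sigma_Z\le\nu$, which follows from the earlier bound $\sigma_Z\le\nu_{\max}\le\nu$.
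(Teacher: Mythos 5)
Your proposal is correct and follows essentially the same route as the paper's proof: specialize the descent inequality \eqref{eq:recursion_arcd} to the prescribed $(\alpha_k,\beta_k,\gamma_k)$, verify that the coefficients of $\|y^k-x^*\|_{Z^\dagger}^2$ and $f(y^k)-f^*$ vanish while both function-value terms acquire the common coefficient $\tfrac{2}{\sigma_Z}$, and unroll the resulting one-step contraction with factor $\beta_k = 1-\sqrt{\sigma_Z/\nu}$. Your explicit check that $\beta_k\in[0,1]$ via $\sigma_Z\le\nu_{\max}\le\nu$ is a small welcome addition the paper leaves implicit.
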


\begin{proof}
Note that the choices of $\alpha_k, \beta_k$ and $\gamma_k$ from the theorem guarantee that:
\begin{align*}
2 \gamma_k^2 \nu = 2 \gamma_k \frac{1-\alpha_k}{\alpha_k}, \quad 1 - \beta_k - \gamma_k \sigma_Z \leq  0,  \quad  2 \gamma_k^2 \nu  - 2 \gamma_k - 2 \gamma_k \beta_k \frac{1-\alpha_k}{\alpha_k} =0.
\end{align*}
Using these relations in Lemma \ref{lem_arcd} (i.e. descent relation \eqref{eq:recursion_arcd}), we get:
\begin{align*}
 \Exp [r_{k+1}^2 + 2 \gamma_k^2 \nu  (f(x^{k+1}) - f^*) | {\cal F}_k ]
 &\overset{\eqref{eq:recursion_arcd}}{\leq}   \beta_k \left( r_k^2 +  2 \gamma_k \frac{1-\alpha_k}{\alpha_k} (f(x^k) - f^*) \right).
\end{align*}
After plugging $\alpha_k = \frac{\gamma_k \sigma_Z}{1 + \gamma_k \sigma_Z}  $ and
$\gamma_k = \frac{1}{\sqrt{\sigma_Z \nu}}  $
we further obtain:
\[  \Exp [r_{k+1}^2 + 2/\sigma_Z  (f(x^{k+1}) - f^*) | {\cal F}_k ] \leq  \beta_k \left( r_k^2 +  2/\sigma_Z (f(x^k) - f^*) \right), \]
and taking now the expectation over the entire history we get:
\[ \Exp [r_{k}^2 + 2/\sigma_Z  (f(x^{k}) - f^*)] \leq  \left( \prod_{j=0}^{k-1} \beta_j \right) \left( r_0^2 +  2/\sigma_Z (f(x^0) - f^*) \right), \]
which leads to the statement of our theorem.
\end{proof}

\begin{table}
\caption{Comparison of convergence rates of \RSD and A-\RSD Algorithms.}
\label{tbl:comparison}
\centering
\begin{tabular}{ | m{3.5cm} | m{2cm} | m{2cm} | }
\hline
   &  \RSD & A-\RSD \\
\hline
smooth convex & $\frac{\|x^0 - x^* \|^2_{Z^\dagger}}{k}$ &  $\frac{\nu \|x^0 - x^* \|^2_{Z^\dagger}}{k^2}$\\
\hline
smooth strong convex & $\left( 1 - \sigma_Z \right)^k$  & $\left( 1 - \sqrt{\frac{\sigma_Z}{\nu}} \right)^k$ \\
\hline
\end{tabular}
\end{table}
Table~\ref{tbl:comparison} summarizes the convergence rates in $\E[f(x^k)] - f^*$ of \RSD and A-\RSD algorithms for smooth (strongly) convex objective functions (we assumed for simplicity that ${\cal R}^2(x^0) \leq \|x^0 - x^* \|^2_{Z^\dagger}$). We observe from this table that we have obtained the typical convergence rates for these two methods, in particular, A-\RSD converges with one order of magnitude faster than \RSD\!, see  \cite{Nes:10} for more details.  It is important to note that in this work we provide the first analysis of an accelerated random sketch descent (A-\RSD) algorithm for optimization problems with multiple non-separable linear constraints.


\section{Illustrative numerical experiments}
In this section we provide several numerical examples  showing the  benefits of random sketching and the performances of our new algorithms.

\vskip10pt
\noindent {\bf Experiment \#1: A pre-fixed coordinate sampling can be a disaster.}
Recently, in \cite{TuVen:17} it has been shown for linear systems that Gauss-Seidel algorithm with randomly sampled coordinates substantially outperforms Gauss-Seidel with any fixed partitioning of the coordinates that are chosen ahead of time. Motivated by this finding, we also  analyze the
behavior of \RSD and A-\RSD algorithms for fixed coordinate sketch, random coordinate sketch and
Gaussian sketch. We build two challenging problems. One problem has a particular structure with a single linear constraint.  The second example is easier, and it involves a random matrix, but the linear constraints  make the problem  more challenging. The first problem is to minimize
the following convex optimization problem parameterized by $\delta \in [0,1]$:
\begin{equation}
\min_{x \in \R^n} x^T \left(I_n + (1-\delta)(e_1e_n^T+ e_ne_1^T)\right)x  \qquad \mbox{s.t.} \quad  e^Tx=0.
\end{equation}
We consider three different choices for $S$, fixed partition of the coordinates, random partition of the coordinates, and a Gaussian random sketch:
\begin{align*}
\text{fixed}:& \;\;  S_{(i,i+1)} = [e_i \; e_{i+1}] \quad \forall i=1:n-1 \\
\text{random}:& \;\; S_{(i,j)} = [e_i \; e_j] \quad \forall i < j\\
\text{Gaussian}: & \;\; S  = [\mathcal{N}(0,1)]^{n\times 2},
\end{align*}
where we recall that $e_i$  denotes the $i$th column of the identity matrix $I_n$ and
$\mathcal{N}(0,1)$ is normally distributed random variable with mean $0$ and variance $1$.
We use the same sketching also for the second problem, where  $M = M_0  + \delta I_n$ and $M_0 \succeq 0$ is a rank deficient random matrix,  and $f(x) = \frac12 x^T M x$. In this case we denote $\{v_k\}_{k=0}^n$ to be a set of orthogonal eigenvectors of $M$, such that $v_1$ corresponds to the largest eigenvalue and $v_n$ is the eigenvector which corresponds to the smallest eigenvalue. We have chosen $x^0 = v_1$ and $A =  v_2^T$. The optimal solution for both problems is $x^* = {\bf 0}$ with $f(x^*) = 0$.

\begin{figure}
\centering
\includegraphics[scale=0.45]{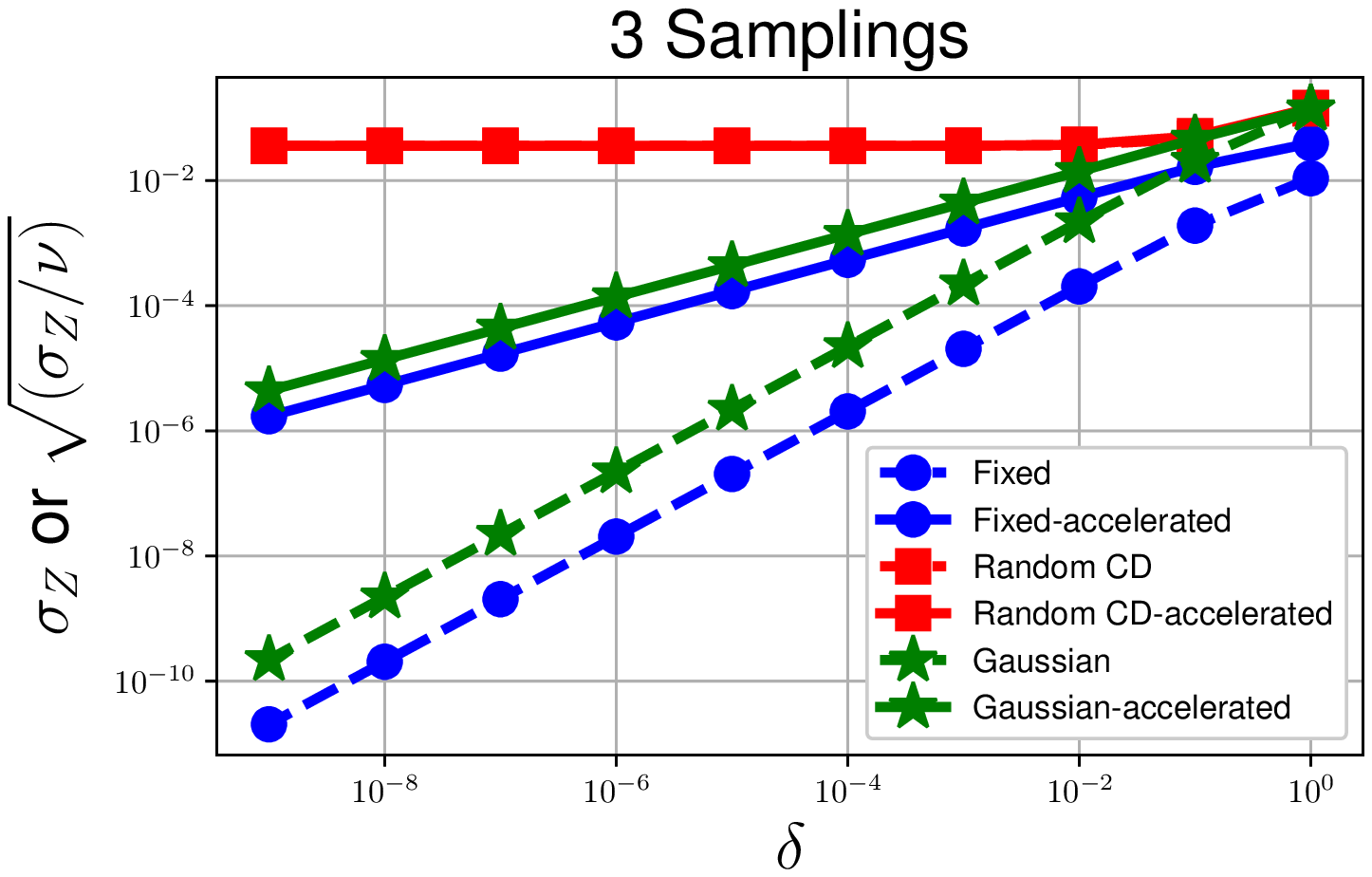}
\includegraphics[scale=0.45]{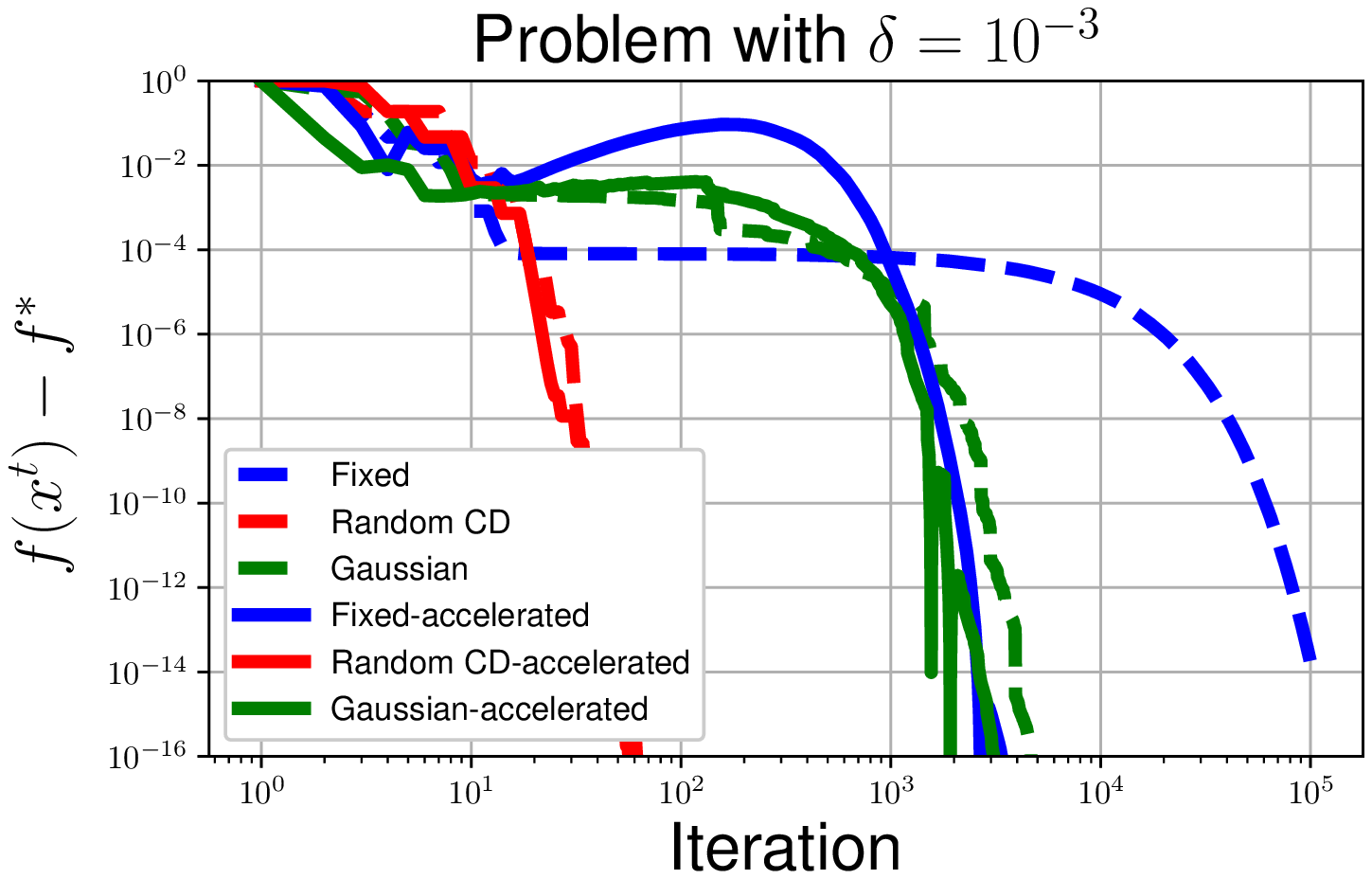}
\includegraphics[scale=0.45]{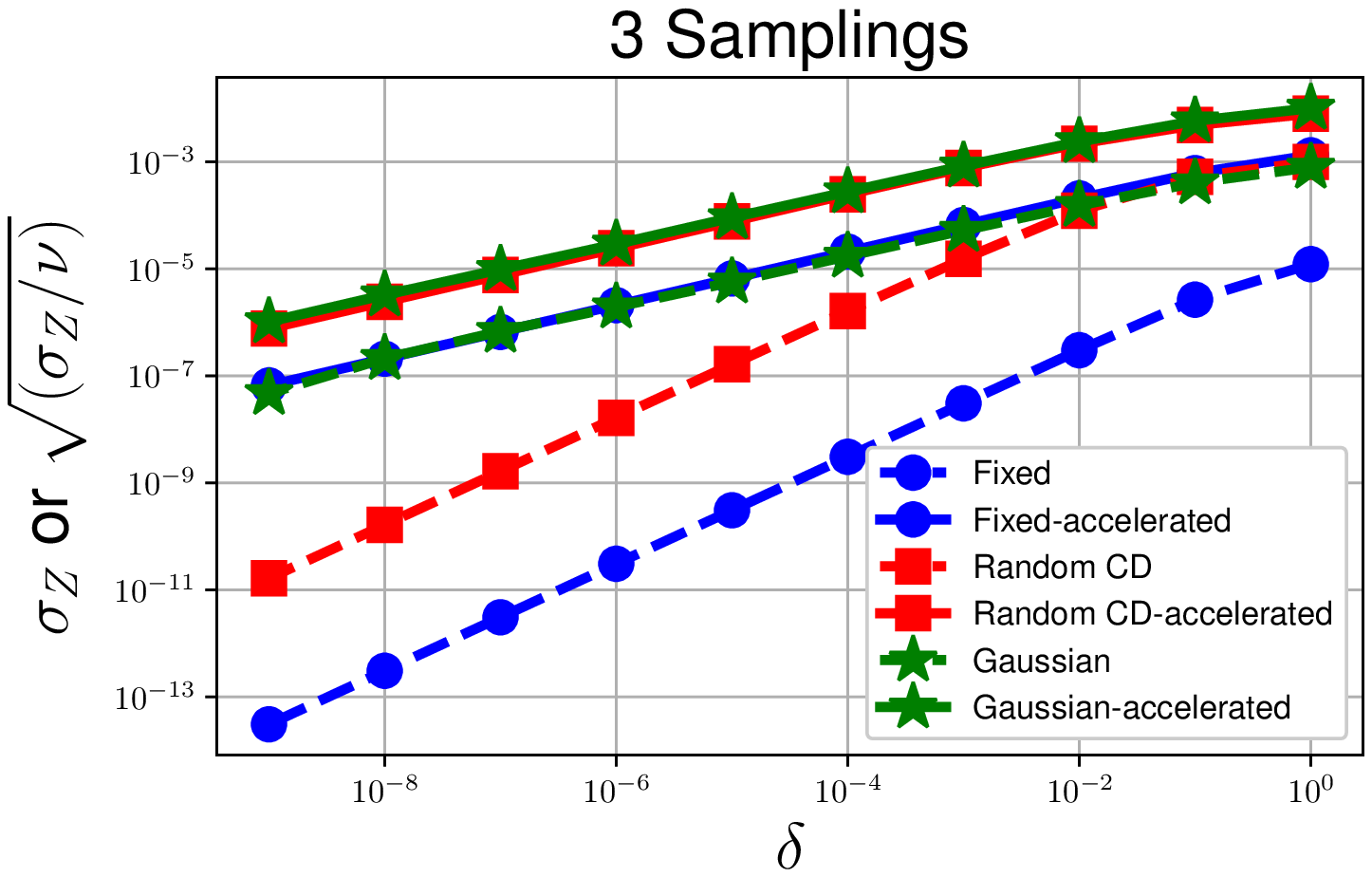}
\includegraphics[scale=0.45]{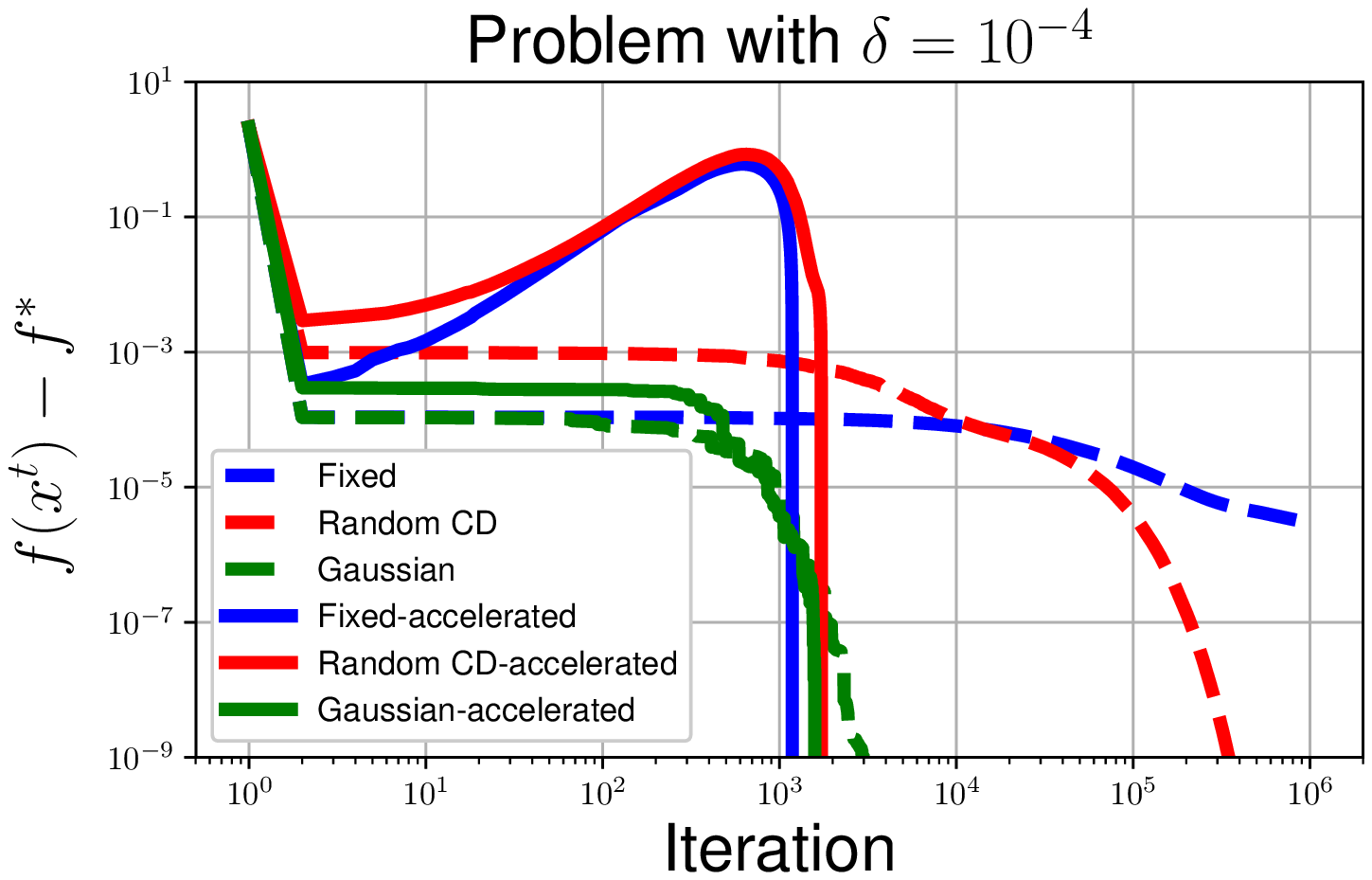}
\caption{Behavior of \RSD and A-\RSD for two problems and 3 different random sketch samplings.}
\label{fig:fixedVsRandom}
\end{figure}
In Figure~\ref{fig:fixedVsRandom}
top row, we show the results for the first problem and in the second row we show the results for the second problem. On the left, we show the important quantities $\sigma_Z$ or $\sqrt{\sigma_Z / \nu }$ which characterize the convergence rates of the two algorithms in the strongly convex case (see Table~\ref{tbl:comparison}). In the right column we show the typical evolution of $f(x^k)-f^*$. One can observe that for the first problem, the random  sampling is the best both in practice, whereas the other two samplings are suffering. The main reason is that for this problem, the most important sketch matrix $S$ is $S = [e_1 \; e_n]$ which is selected more often by the random sketching than the other two sketching  strategies. For the second test problem the best sketching is the Gaussian sketch. Therefore, empirically this experiment shows that both algorithms, \RSD  and A-\RSD, based on random sketching provides speedups  compared to  the fixed partitioning counterpart.

\vskip10pt
\noindent
{\bf Experiment \#2: The effect of a quadratic upper-bound in convergence speed.} In this experiment, we investigate the benefit of using the full matrix $M$  in \eqref{eq:M}
as compared to  just using a scaled diagonal upper-bound as considered e.g. in \cite{FroRei:15,Nec:13}. Consider the following convex optimization problem parameterized by 
$\delta \in [0,1]$:
\begin{equation}
\min_{x \in \R^n} x^T  \underbrace{(\delta I_n + (1-\delta) ee^T)}_{B\succeq 0} x
\quad \mbox{s.t.} \quad  e^Tx = 0.
\end{equation}

\begin{figure}
\centering
\includegraphics[scale=0.45]{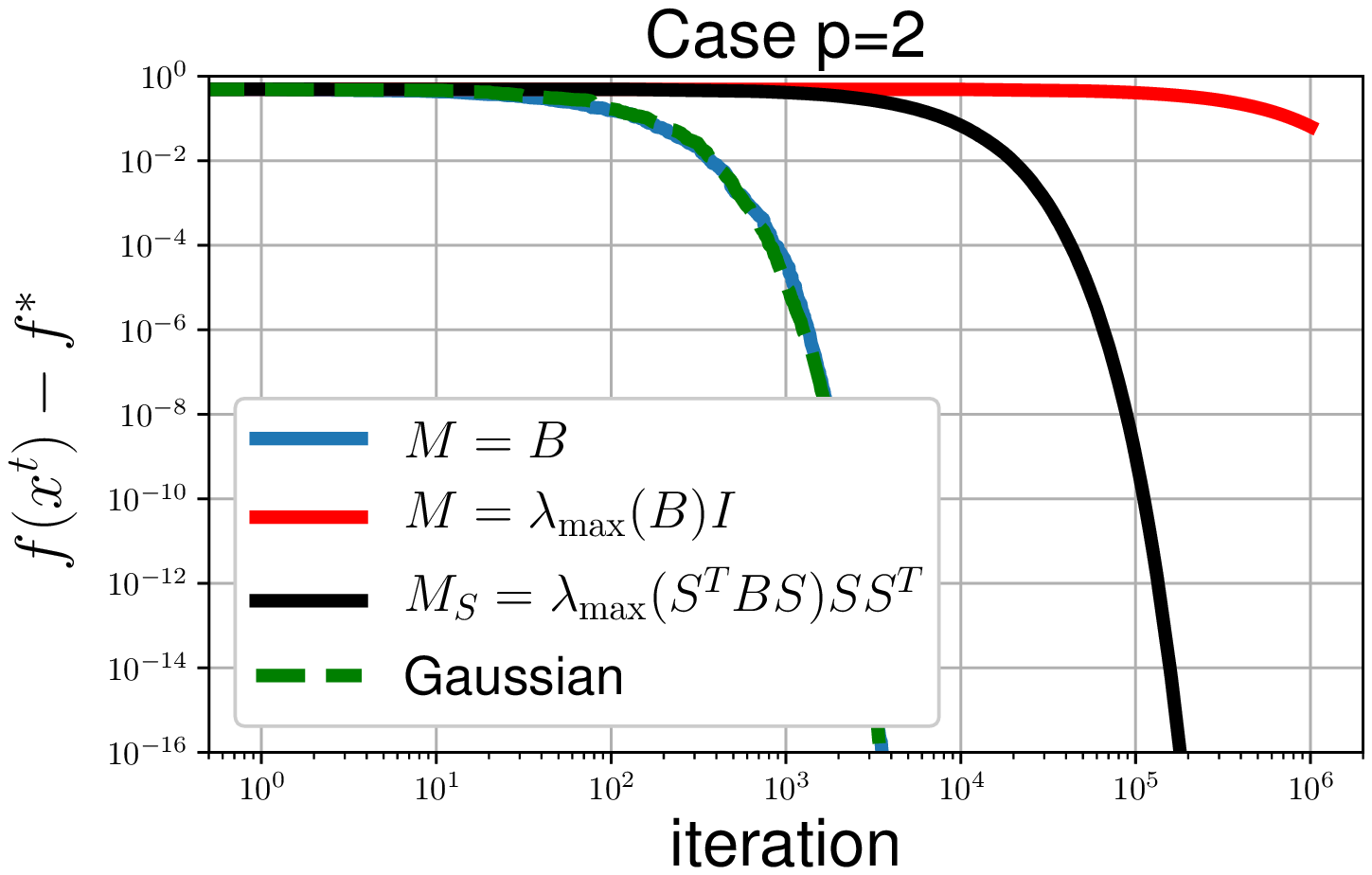}
\includegraphics[scale=0.45]{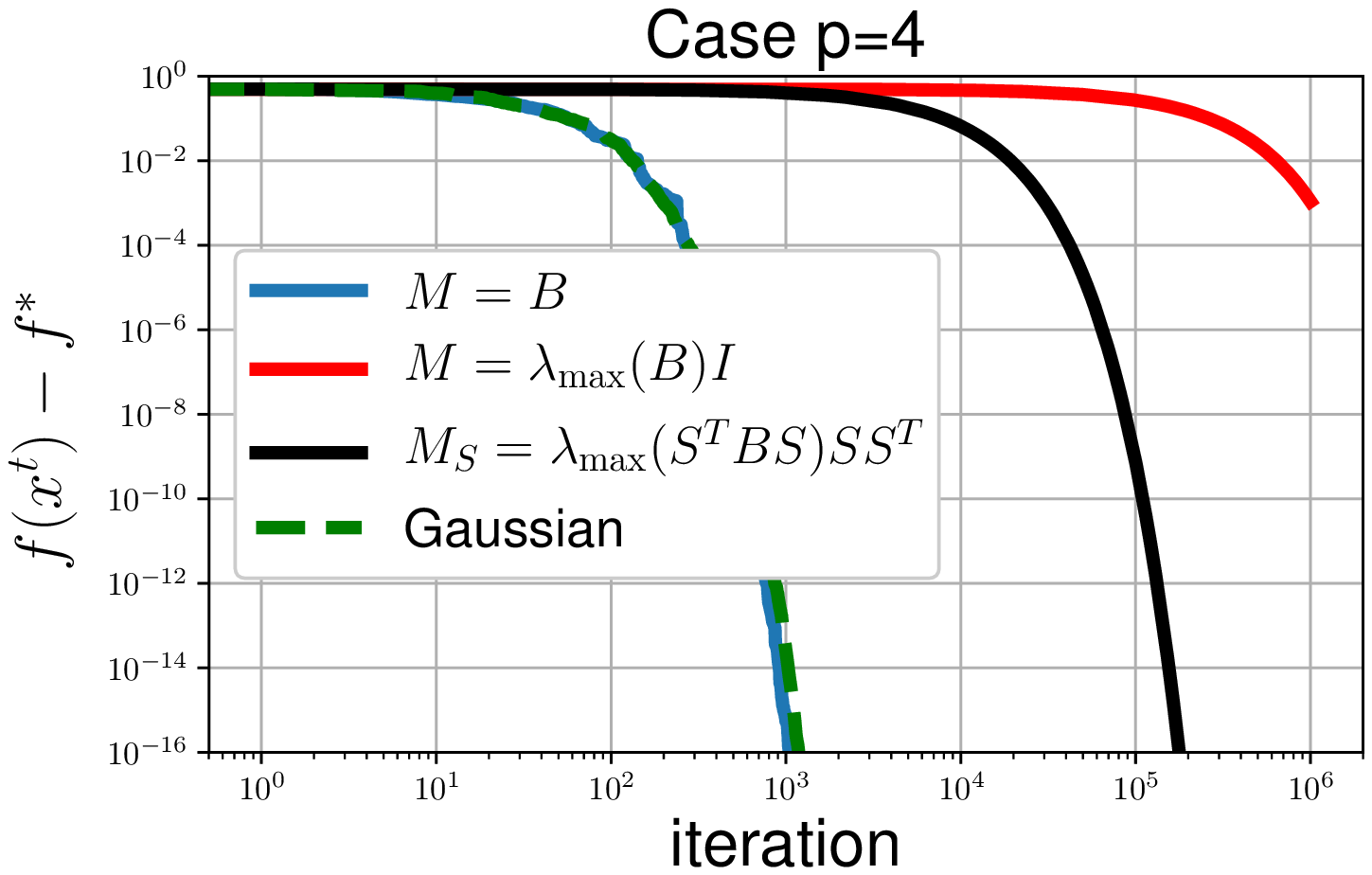}
\includegraphics[scale=0.45]{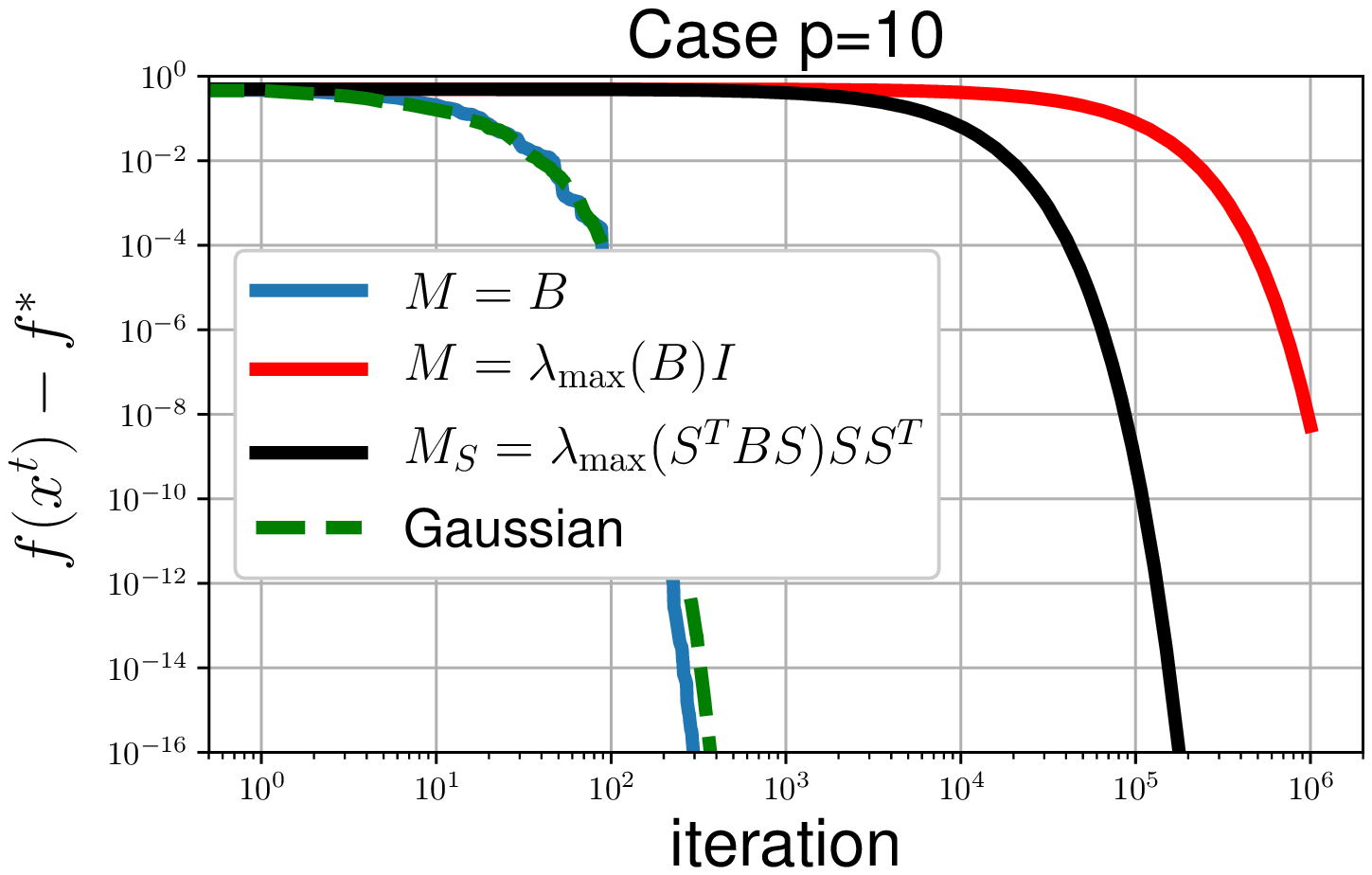}
\includegraphics[scale=0.45]{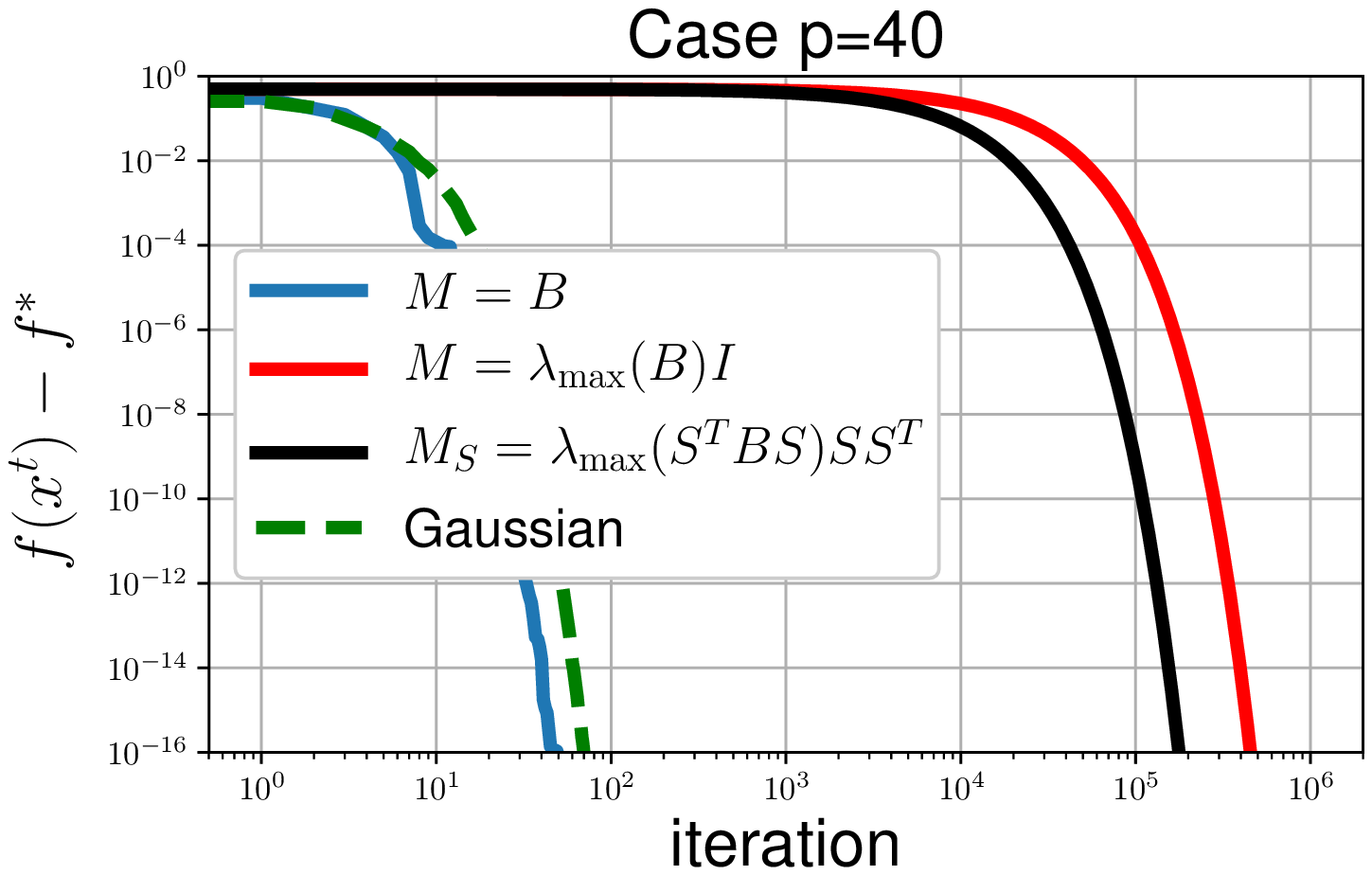}
\caption{Comparison of speed of Algorithm \RSD for various choices of $M$ and $p$.}
\label{fig:MvsLs}
\end{figure}
We  compare the speed of Algorithm \RSD when the random matrix $S$ is chosen uniformly at random as $p$  columns of the identity matrix and consider three choices for the matrix $M$:  $M = B$, $M = \lambda_{\max}(B) I_n$ and $M_S = \lambda_{\max}( S^T B S) S S^T$. We also implement  \RSD for  the Gaussian sketch and $M=B$. From Figure \ref{fig:MvsLs} one can observe that if we set $M=B$, then increasing $p$ will  decrease the number of iterations needed to achieve the desired accuracy with the best rate. We can also observe that Gaussian sketch or coordinate descent sketch have  a similar behavior for the case $M=B$.

\vskip10pt
\noindent
{\bf Experiment \#3: Portfolio optimization with specified industry allocation.}
In Section~\ref{sec:po} we have described the basic Markowitz portfolio selection model
\cite{Mar:52}. We have also described a variant of the basic model which assumes that
investor also decide how much net wealth would be allocated in different asset classes (e.g. Financials, Health Care, Industrials, etc). When we have $C$ asset classes, then the problem of minimizing the risk with all the desired constraints will lead to $C+2$ linear constraints. In Figure~\ref{fig:Portfolio} we show the performance of the \RSD algorithm with the sketch matrix $S$  chosen random Gaussian. We considered real data from the index S\&P500 which contains 500 assets split across $C = 11$ asset classes. The $\mu_i$s and $\Sigma$ were estimated from the historical data. In the left plot we show the evolution of error $f(x^k)-f^*$ for various sizes of $p$ as a function of iterations and in the middle plot  we put on x-axis the computational time.  We can observe that increasing $p$ leads to significantly  decrease of number of iterations and also faster convergence in terms of wall-clock time. Note also that as $p$ is becoming larger, the per-iteration computational  cost increases  moderately (see the right plot).
\begin{figure}
\centering
\includegraphics[scale=0.32]{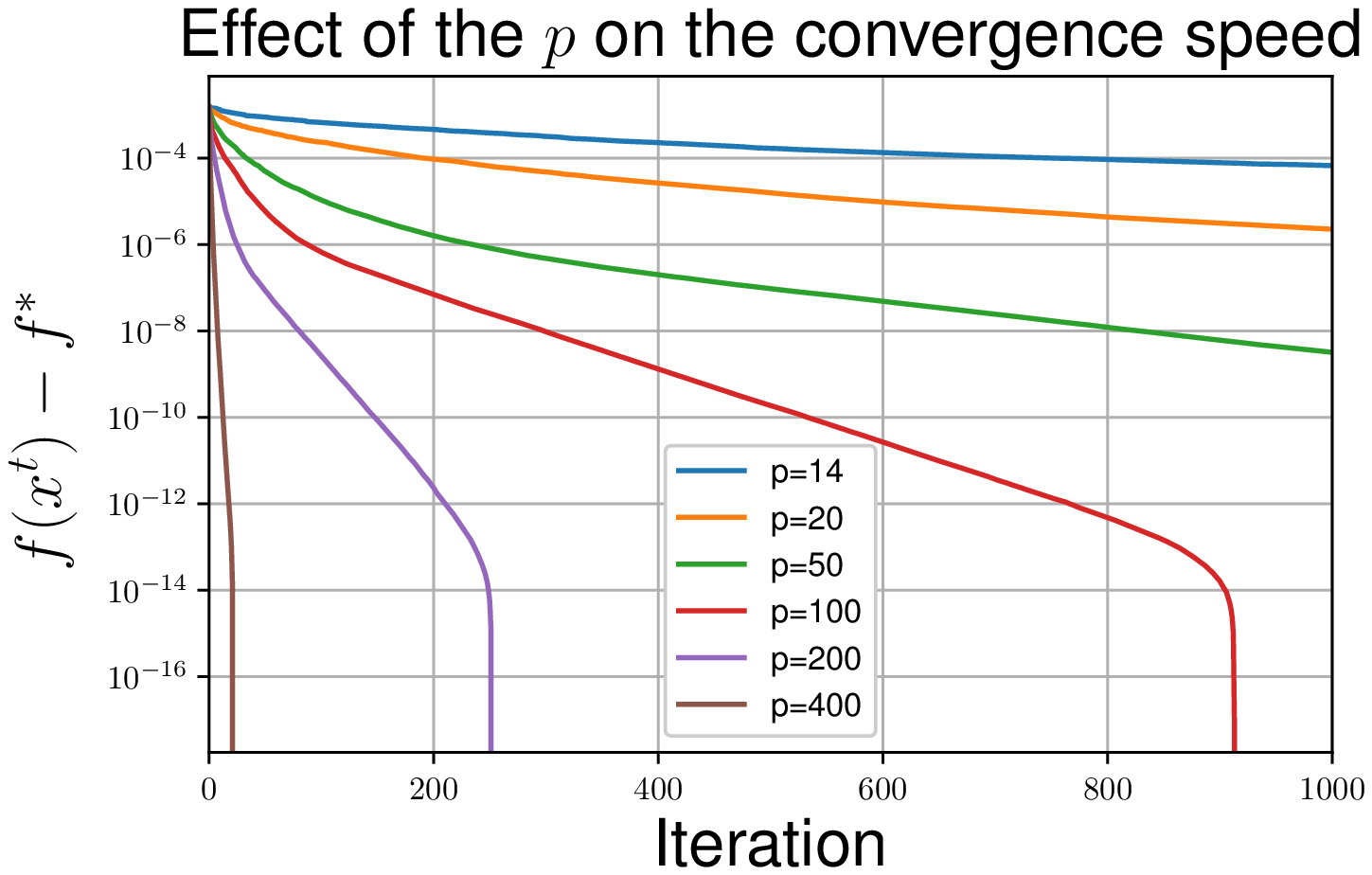}
\includegraphics[scale=0.32]{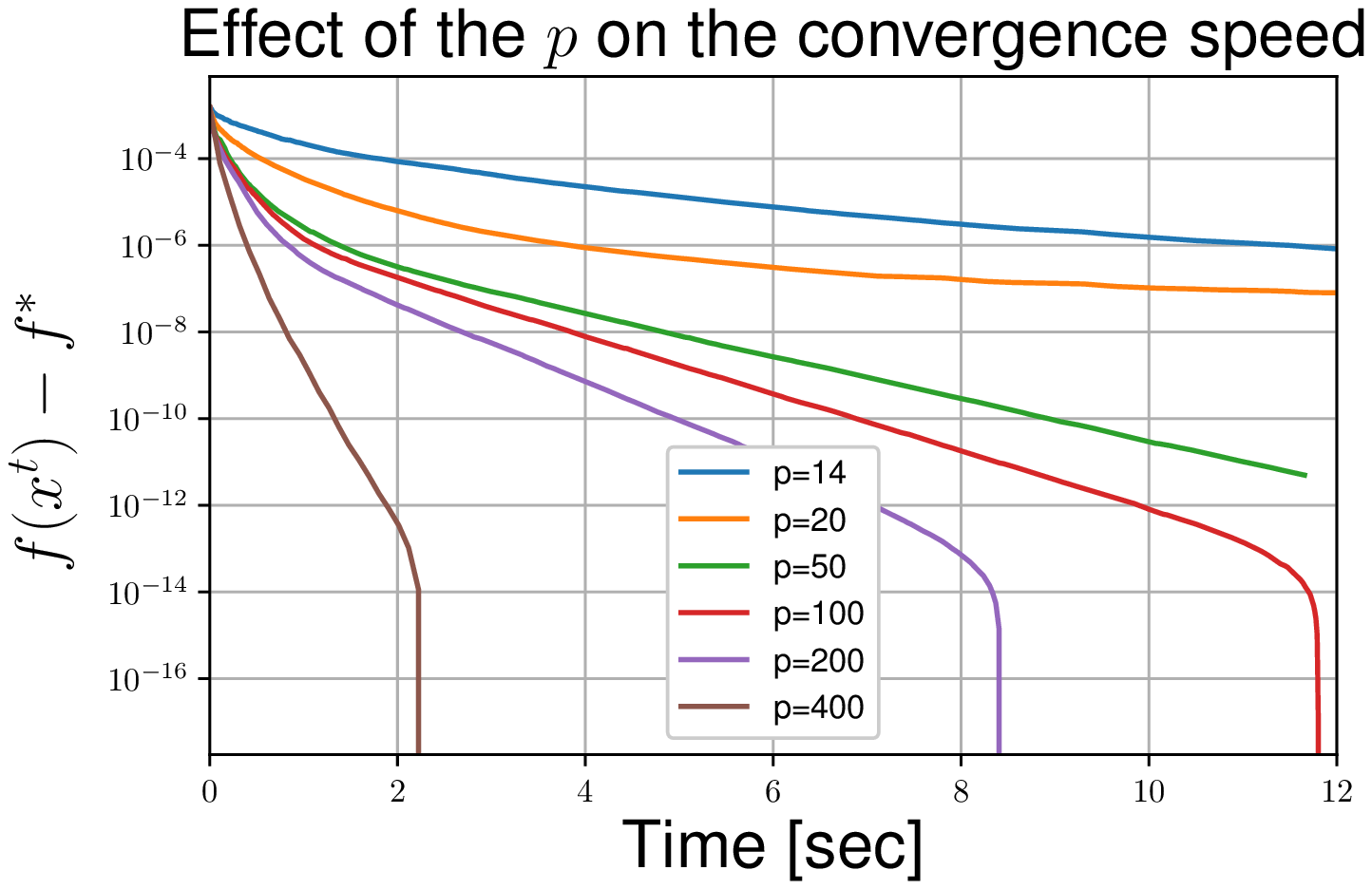}
\includegraphics[scale=0.32]{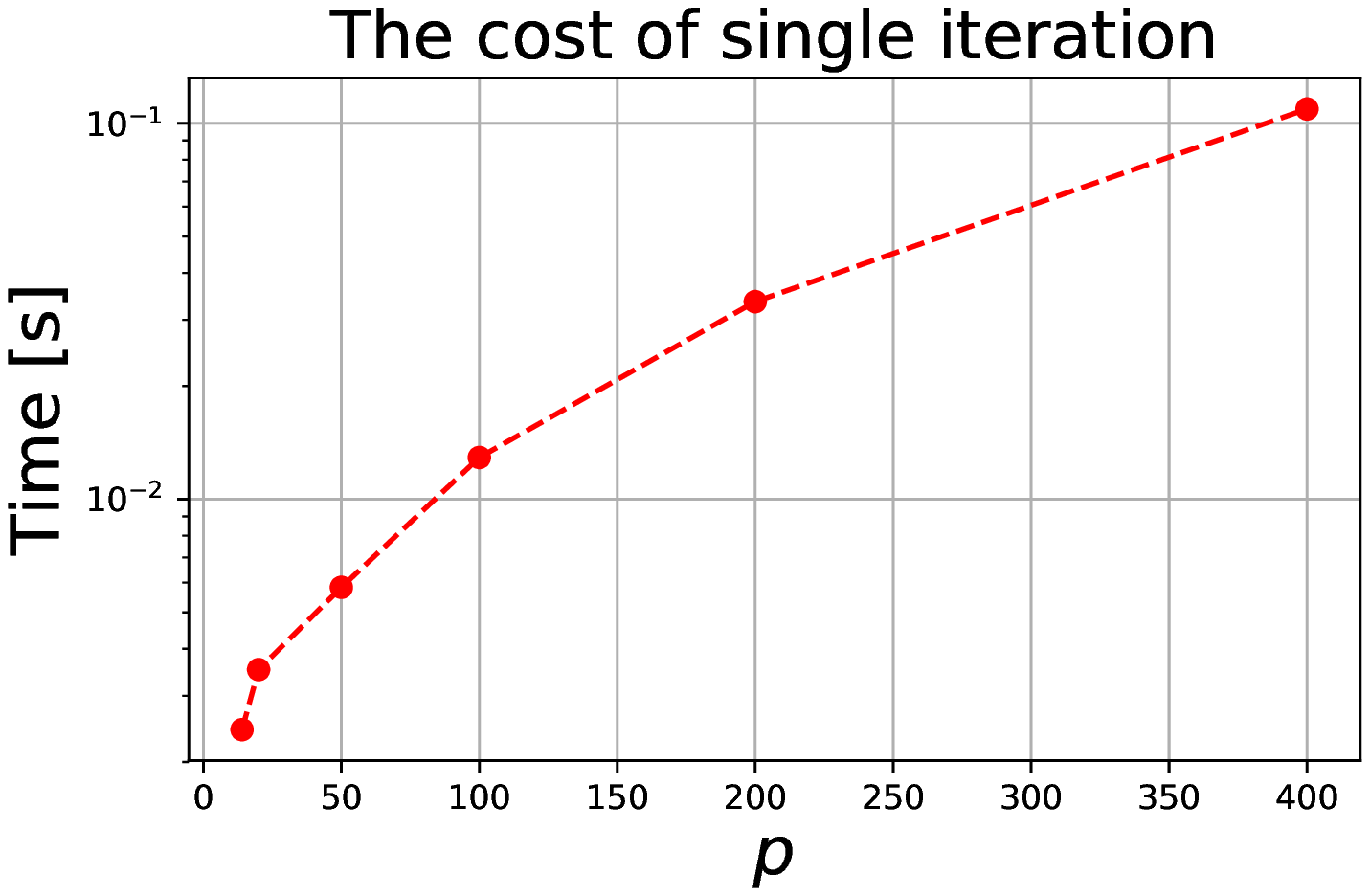}
\caption{Algorithm \RSD on  Markovitz portfolio optimization problem with $13$ linear  constraints.}
\label{fig:Portfolio}
\end{figure}


\section*{Acknowledgements}
The work of Ion Necoara was supported by the Executive Agency for
Higher Education, Research and Innovation Funding (UEFISCDI), Romania, under PNIII-P4-PCE-2016-0731, project ScaleFreeNet, no. 39/2017. The work of  Martin Tak\'a\v{c} was partially supported by the U.S. National Science Foundation, under award numbers NSF:CCF:1618717, NSF:CMMI:1663256 and NSF:CCF:1740796.


\bibliographystyle{plain}
\bibliography{citations}

\begin{thebibliography}{10}

\bibitem{Bec:12}
Amir Beck.
\newblock The 2-coordinate descent method for solving double-sided simplex
  constrained minimization problems.
\newblock {\em Journal of Optimization Theory and Applications},
  162(3):892--919, 2014.

\bibitem{BecTet:12}
Amir Beck and Luba Tetruashvili.
\newblock On the convergence of block coordinate descent type methods.
\newblock {\em SIAM journal on Optimization}, 23(4):2037--2060, 2013.

\bibitem{berahas2017investigation}
Albert~S Berahas, Raghu Bollapragada, and Jorge Nocedal.
\newblock An investigation of newton-sketch and subsampled newton methods.
\newblock {\em arXiv:1705.06211}, 2017.

\bibitem{fercoq2014fast}
Olivier Fercoq, Zheng Qu, Peter Richt{\'a}rik, and Martin Tak{\'a}{\v{c}}.
\newblock Fast distributed coordinate descent for non-strongly convex losses.
\newblock In {\em Machine Learning for Signal Processing (MLSP), 2014 IEEE
  International Workshop on}, pages 1--6. IEEE, 2014.

\bibitem{FerRic:15}
Olivier Fercoq and Peter Richt{\'a}rik.
\newblock Accelerated, parallel, and proximal coordinate descent.
\newblock {\em SIAM Journal on Optimization}, 25(4):1997--2023, 2015.

\bibitem{FroRei:15}
Rafael Frongillo and Mark~D Reid.
\newblock Convergence analysis of prediction markets via randomized subspace
  descent.
\newblock In {\em Advances in Neural Information Processing Systems}, pages
  3034--3042, 2015.

\bibitem{gower2018stochastic}
Robert~M Gower, Peter Richt{\'a}rik, and Francis Bach.
\newblock Stochastic quasi-gradient methods: Variance reduction via jacobian
  sketching.
\newblock {\em arXiv:1805.02632}, 2018.

\bibitem{GurOzd:17}
Mert Gurbuzbalaban, Asuman Ozdaglar, Pablo~A Parrilo, and Nuri Vanli.
\newblock When cyclic coordinate descent outperforms randomized coordinate
  descent.
\newblock In {\em Advances in Neural Information Processing Systems}, pages
  6999--7007, 2017.

\bibitem{HonLuo:12}
Mingyi Hong and Zhi-Quan Luo.
\newblock On the linear convergence of the alternating direction method of
  multipliers.
\newblock {\em Mathematical Programming}, 162(1-2):165--199, 2017.

\bibitem{IshTem:12}
Hideaki Ishii, Roberto Tempo, and Er-Wei Bai.
\newblock A web aggregation approach for distributed randomized pagerank
  algorithms.
\newblock {\em IEEE Transactions on Automatic Control}, 57(11):2703--2717,
  2012.

\bibitem{KarNut:16}
Hamed Karimi, Julie Nutini, and Mark Schmidt.
\newblock Linear convergence of gradient and proximal-gradient methods under
  the polyak-{\l}ojasiewicz condition.
\newblock In {\em Joint European Conference on Machine Learning and Knowledge
  Discovery in Databases}, pages 795--811. Springer, 2016.

\bibitem{Lee:Sidford}
Yin~Tat Lee and Aaron Sidford.
\newblock Efficient accelerated coordinate descent methods and faster
  algorithms for solving linear systems.
\newblock {\em arXiv:1305.1922}, 2013.

\bibitem{LiuWri:14}
Ji~Liu and Stephen~J Wright.
\newblock Asynchronous stochastic coordinate descent: Parallelism and
  convergence properties.
\newblock {\em SIAM Journal on Optimization}, 25(1):351--376, 2015.

\bibitem{LuXia:15}
Zhaosong Lu and Lin Xiao.
\newblock On the complexity analysis of randomized block-coordinate descent
  methods.
\newblock {\em Mathematical Programming}, 152(1-2):615--642, 2015.

\bibitem{LuoTse:93}
Zhi-Quan Luo and Paul Tseng.
\newblock Error bounds and convergence analysis of feasible descent methods: a
  general approach.
\newblock {\em Annals of Operations Research}, 46(1):157--178, 1993.

\bibitem{marevcek2015distributed}
Jakub Mare{\v{c}}ek, Peter Richt{\'a}rik, and Martin Tak{\'a}{\v{c}}.
\newblock Distributed block coordinate descent for minimizing partially
  separable functions.
\newblock In {\em Numerical Analysis and Optimization}, pages 261--288.
  Springer, 2015.

\bibitem{Mar:52}
Harry Markowitz.
\newblock Portfolio selection.
\newblock {\em The journal of finance}, 7(1):77--91, 1952.

\bibitem{Nec:13}
Ion Necoara.
\newblock Random coordinate descent algorithms for multi-agent convex
  optimization over networks.
\newblock {\em IEEE Transactions on Automatic Control}, 58(8):2001--2012, 2013.

\bibitem{NecCli:14}
Ion Necoara and Dragos Clipici.
\newblock Parallel random coordinate descent method for composite minimization:
  Convergence analysis and error bounds.
\newblock {\em SIAM Journal on Optimization}, 26(1):197--226, 2016.

\bibitem{NecNes:11}
Ion Necoara, Yurii Nesterov, and Fran{\c{c}}ois Glineur.
\newblock Random block coordinate descent methods for linearly constrained
  optimization over networks.
\newblock {\em Journal of Optimization Theory and Applications},
  173(1):227--254, 2017.

\bibitem{NecPat:14}
Ion Necoara and Andrei Patrascu.
\newblock A random coordinate descent algorithm for optimization problems with
  composite objective function and linear coupled constraints.
\newblock {\em Computational Optimization and Applications}, 57(2):307--337,
  2014.

\bibitem{Nes:10}
Yu~Nesterov.
\newblock Efficiency of coordinate descent methods on huge-scale optimization
  problems.
\newblock {\em SIAM Journal on Optimization}, 22(2):341--362, 2012.

\bibitem{Nes:04}
Yurii Nesterov.
\newblock {\em Introductory lectures on convex optimization: A basic course},
  volume~87.
\newblock Springer Science \& Business Media, 2013.

\bibitem{pilanci2017newton}
Mert Pilanci and Martin~J Wainwright.
\newblock Newton sketch: A near linear-time optimization algorithm with
  linear-quadratic convergence.
\newblock {\em SIAM Journal on Optimization}, 27(1):205--245, 2017.

\bibitem{qu2016sdna}
Zheng Qu, Peter Richt{\'a}rik, Martin Tak{\'a}\v{c}, and Olivier Fercoq.
\newblock {SDNA}: stochastic dual newton ascent for empirical risk
  minimization.
\newblock In {\em International Conference on Machine Learning}, pages
  1823--1832, 2016.

\bibitem{RedHef:14}
Sashank Reddi, Ahmed Hefny, Carlton Downey, Avinava Dubey, and Suvrit Sra.
\newblock Large-scale randomized-coordinate descent methods with non-separable
  linear constraints.
\newblock {\em arXiv:1409.2617}, 2014.

\bibitem{RicTac:11}
Peter Richt{\'a}rik and Martin Tak{\'a}{\v{c}}.
\newblock Iteration complexity of randomized block-coordinate descent methods
  for minimizing a composite function.
\newblock {\em Mathematical Programming}, 144(1-2):1--38, 2014.

\bibitem{richtarik2016optimal}
Peter Richt{\'a}rik and Martin Tak{\'a}{\v{c}}.
\newblock On optimal probabilities in stochastic coordinate descent methods.
\newblock {\em Optimization Letters}, 10(6):1233--1243, 2016.

\bibitem{RicTac:13}
Peter Richt{\'a}rik and Martin Tak{\'a}{\v{c}}.
\newblock Parallel coordinate descent methods for big data optimization.
\newblock {\em Mathematical Programming}, 156(1-2):433--484, 2016.

\bibitem{richtarik2017stochastic}
Peter Richt{\'a}rik and Martin Tak{\'a}{\v{c}}.
\newblock Stochastic reformulations of linear systems: algorithms and
  convergence theory.
\newblock {\em arXiv:1706.01108}, 2017.

\bibitem{ShaZha:13}
Shai Shalev-Shwartz and Tong Zhang.
\newblock Stochastic dual coordinate ascent methods for regularized loss
  minimization.
\newblock {\em Journal of Machine Learning Research}, 14(Feb):567--599, 2013.

\bibitem{SunYe:16}
Ruoyu Sun and Yinyu Ye.
\newblock Worst-case complexity of cyclic coordinate descent: $ o (n^2) $ gap
  with randomized version.
\newblock {\em arXiv:1604.07130}, 2016.

\bibitem{takavc2015distributed}
Martin Tak{\'a}{\v{c}}, Peter Richt{\'a}rik, and Nathan Srebro.
\newblock Distributed mini-batch {SDCA}.
\newblock {\em arXiv:1507.08322}, 2015.

\bibitem{tappenden2018complexity}
Rachael Tappenden, Martin Tak{\'a}{\v{c}}, and Peter Richt{\'a}rik.
\newblock On the complexity of parallel coordinate descent.
\newblock {\em Optimization Methods and Software}, 33(2):372--395, 2018.

\bibitem{Tse09}
Paul Tseng and Sangwoon Yun.
\newblock Block-coordinate gradient descent method for linearly constrained
  nonsmooth separable optimization.
\newblock {\em Journal of optimization theory and applications}, 140(3):513,
  2009.

\bibitem{TuVen:17}
Stephen Tu, Shivaram Venkataraman, Ashia~C Wilson, Alex Gittens, Michael~I
  Jordan, and Benjamin Recht.
\newblock Breaking locality accelerates block gauss-seidel.
\newblock {\em arXiv:1701.03863}, 2017.

\bibitem{wang2017sketching}
Jialei Wang, Jason~D Lee, Mehrdad Mahdavi, Mladen Kolar, and Nathan Srebro.
\newblock Sketching meets random projection in the dual: A provable recovery
  algorithm for big and high-dimensional data.
\newblock {\em Electronic Journal of Statistics}, 11(2):4896--4944, 2017.

\bibitem{WeiOzd:13}
Ermin Wei, Asuman Ozdaglar, and Ali Jadbabaie.
\newblock A distributed newton method for network utility maximization--i:
  Algorithm.
\newblock {\em IEEE Transactions on Automatic Control}, 58(9):2162--2175, 2013.

\bibitem{Wri:10}
Stephen~J Wright.
\newblock Accelerated block-coordinate relaxation for regularized optimization.
\newblock {\em SIAM Journal on Optimization}, 22(1):159--186, 2012.

\bibitem{XiaBoy:06}
Lin Xiao and Stephen Boyd.
\newblock Optimal scaling of a gradient method for distributed resource
  allocation.
\newblock {\em Journal of optimization theory and applications},
  129(3):469--488, 2006.

\end{thebibliography}


\appendix
\section{\!\!\!\!}
In this appendix we discuss how to implement the A-\RSD updates without  full-dimensional vector operations. Recall that we assume the following settings: the sketch matrix  $S$ is sparse and  we can efficiently evaluate $\nabla f(\alpha v + \beta u)$. First we derive an efficient  implementation of A-\RSD iterations for strongly convex objective functions and then a simplified implementation for the convex case. Following a similar approach as in the coordinate descent work proposed in \cite{Lee:Sidford} for solving linear systems and further  extended in \cite{FerRic:15} for accelerated coordinate descent method with separable composite problems we note that:
\begin{align*}
y^{k+1} &= \alpha_{k+1} v^{k+1} + (1-\alpha_{k+1}) x^{k+1}\\
&= (1 -\alpha_{k+1} \beta_k) y^k + \alpha_{k+1} \beta_k v^k - (1- \alpha_{k+1} (1- \gamma_k)) Z_S \nabla f(y^k).
\end{align*}
Hence, we obtain the following recursion:
\begin{align}
\begin{pmatrix}
y^{k+1} \\
v^{k+1}
\end{pmatrix}
 = A_{k}
\begin{pmatrix}
y^k \\
v^{k}
\end{pmatrix}
- s_k,
\label{eq:afeawefawwa}
\end{align}
with
\begin{align*}
A_k = \begin{pmatrix}
 1  -\alpha_{k+1} \beta_k
 & \alpha_{k+1} \beta_k \\
  1-\beta_k & \beta_k
\end{pmatrix},
\qquad
s_k=\begin{pmatrix}
(1-\alpha_{k+1} (1- \gamma_k))Z_S \nabla f(y^k)\\
\gamma_k Z_S \nabla f(y^k)
\end{pmatrix}.
\end{align*}
Now, our goal is to maintain two sequences $\{u^k\}_k,\{w^k\}_k$
such that:
$\begin{pmatrix}
y^{k} \\ 
v^k
\end{pmatrix}
= B_k
\begin{pmatrix}
u^k \\ 
w^k
\end{pmatrix}$. Therefore, it has to hold that
\begin{align*}
B_{k+1}
\begin{pmatrix}
u^{k+1} \\ 
w^{k+1}
\end{pmatrix}
=
\begin{pmatrix}
y^{k+1} \\ 
v^{k+1}
\end{pmatrix}
\overset{\eqref{eq:afeawefawwa}}{=}
A_{k} B_k
\begin{pmatrix}
u^k \\ 
w^k
\end{pmatrix}
- s_k,
\end{align*}
and therefore we require
\begin{align*}
\begin{pmatrix}
u^{k+1} \\ 
w^{k+1}
\end{pmatrix}
=B_{k+1}^{-1} A_{k} B_k
\begin{pmatrix}
u^k \\ 
w^k
\end{pmatrix}
- B_{k+1}^{-1} s_k.
\end{align*}
In order to make this computationally efficient, it is sufficient to define $B_k$ recursively as:
$$ B_0 = I_2, \quad B_{k+1} = A_k B_k,$$
$u^0 = y^0$ and  $w^0 = v^0$ to obtain the following update rule
\begin{align*}
\begin{pmatrix}
u^{k+1} \\ 
w^{k+1}
\end{pmatrix}
=
\begin{pmatrix}
u^k \\ 
w^k
\end{pmatrix}
- B_{k+1}^{-1} s_k,
\end{align*}
which is a sparse update provided that $s_k$ is a sparse vector. However, when the sketch matrix 
$S$ is sparse the vector $Z_S \nabla f(y^k)$ is  sparse as well and consequently  $s_k$ is also a sparse vector (see Example \ref{example} where for $S_{(i,j)}=[e_i \ e_j]$ the corresponding vector $Z_{(i,j)} \nabla f(y)$ has only two non-zero entries). The final algorithm is  depicted in Algorithm~\ref{alg:arcd-efficient} below.

\begin{algorithm}
\caption{Efficient implementation of A-\RSD for sparse sketching: strongly convex case}
\label{alg:arcd-efficient}
\begin{algorithmic}[1]
\STATE {\bf Input:} Positive sequences $\{\alpha_k\}_{k=0}^\infty,  \{\beta_k\}_{k=0}^\infty,  \{\gamma_k\}_{k=0}^\infty$
\STATE choose  $x^0 \in \R^n$ such that $ A x^0 = b$ and set $u^0 = w^0 = x^0$
\STATE set $B_0 = I_2$
\FOR {  $k \geq 0$}
\STATE
sample $S \sim {\sS}$
\STATE compute $g = Z_S \nabla f\left( \underbrace{B_k^{11} u^k + B_k^{12} w^k }_{y^k} \right) $
\STATE
 $B_{k+1} = A_k B_k$
\STATE
$
\begin{pmatrix}
u^{k+1} \\ w^{k+1}
\end{pmatrix}
=
\begin{pmatrix}
u^k \\ w^k
\end{pmatrix}
-
B_{k+1}^{-1}
\begin{pmatrix}
(1-\alpha_{k+1} (1- \gamma_k)) g\\
\gamma_k g
\end{pmatrix}
$
\ENDFOR
\end{algorithmic}
\end{algorithm}


\vskip10pt
\noindent
{\bf Simplified Convex Case.}
In the case of non-strongly  convex objective function, the implementation can be significantly simplified using the fact that  $\beta_k = 1$ for all $k$. Then, we have:
\begin{equation}
v^{k+1} =  v^k  - \gamma_k Z_S \nabla f(y^k)
\label{eq:afwefawfawfcaw}
\end{equation}
and
\begin{align*}
y^{k+1} - v^{k+1} & = \alpha_{k+1} v^{k+1} + (1-\alpha_{k+1}) x^{k+1} - v^{k+1}\\
& = (1-\alpha_{k+1}) (y^k -  Z_S \nabla f(y^k) -v^k  + \gamma_k Z_S \nabla f(y^k))\\
&= (1-\alpha_{k+1}) (y^k-v^k) - (1-\alpha_{k+1})(1 - \gamma_k) Z_S \nabla f(y^k).
\end{align*}
Therefore, we obtain the following recursion:
\begin{align}
\begin{pmatrix}
y^{k+1}-v^{k+1} \\
v^{k+1}
\end{pmatrix}
 =
\tilde A_k
\begin{pmatrix}
y^k - v^{k} \\
v^{k}
\end{pmatrix}
- \tilde s_k,
\label{eqfafwafwea}
\end{align}
with
$$ \tilde A_k =
\begin{pmatrix}
1-\alpha_{k+1} & 0 \\
0 & 1
\end{pmatrix},
\qquad
\tilde s_k =
\begin{pmatrix}
(1-\alpha_{k+1})(1-\gamma_k) Z_S \nabla f(y^k)\\
\gamma_k Z_S \nabla f(y^k)
\end{pmatrix}.$$
Now, we see that the update of $v^k$ given by \eqref{eq:afwefawfawfcaw} is sparse if $Z_S \nabla f(y^k)$ is sparse. Further, we want to express $y^{k+1} - v^{k+1} = b_{k+1} u^{k+1}$. Then, from \eqref{eqfafwafwea} we have:
\begin{align*}
b_{k+1} u^{k+1} &= y^{k+1} - v^{k+1} = (1-\alpha_{k+1}) (y^{k} - v^{k}) - (1-\alpha_{k+1})(1-\gamma_k) Z_S \nabla f(y^k) \\
&= (1-\alpha_{k+1}) b_k  u^{k} - (1-\alpha_{k+1})(1-\gamma_k) Z_S \nabla f(y^k).
\end{align*}
Therefore, if we define $b_{k+1} = (1-\alpha_{k+1}) b_k$, this will simplify to:
\begin{align*}
u^{k+1} &= u^{k} - \frac{(1-\alpha_{k+1})(1-\gamma_k)}{b_{k+1}} Z_S \nabla f(y^k) = u^{k} - \frac{1-\gamma_k}{b_{k}}  Z_S \nabla f(y^k).
\end{align*}
It follows that the update of $u^k$  is also sparse if $Z_S \nabla f(y^k)$ is sparse. Next, we can easily compute $y^k = v^k + b_k u^k$ (however, this shouldn't be formed during the run of the algorithm). Finally, it is sufficient to note that $v^0 = x^0$, $u^0 = {\bf 0}$ and we can choose $b_0 = 1$.  The final algorithm is given in  Algorithm~\ref{alg:efficient:convexCase} below.

\begin{algorithm}
\caption{Efficient implementation of A-\RSD for sparse sketching: convex case}
\label{alg:efficient:convexCase}
\begin{algorithmic}[1]
\STATE {\bf Input:} Positive sequences $\{\alpha_k\}_{k=0}^\infty,   \{\gamma_k\}_{k=0}^\infty$
\STATE Choose  $x^0 \in \R^n$ such that $ A x^0 = b$
\STATE set $v^0 = x^0$, $u^0 = {\bf 0}$ and  $b_0 = 1$
\FOR {  $k \geq 0$}
\STATE sample $S \sim {\sS}$
\STATE compute $g= Z_S \nabla f \left(\underbrace{v^k + b_k u^k}_{y^k}\right)$
\STATE $v^{k+1} =  v^k  - \gamma_k g$
\STATE $u^{k+1} = u^{k} - \frac{1-\gamma_k}{b_{k}} g$
\STATE $b_{k+1} = (1-\alpha_{k+1}) b_k$
\ENDFOR
\end{algorithmic}
\end{algorithm}

\end{document}